\DeclareMathOperator{\aut}{Aut}
\DeclareMathOperator{\cay}{Cay}
\DeclareMathOperator{\cyc}{Cyc}
\DeclareMathOperator{\iso}{Iso}
\DeclareMathOperator{\orb}{Orb}
\DeclareMathOperator{\rk}{rk}
\DeclareMathOperator{\sym}{Sym}
\DeclareMathOperator{\rad}{rad}
\DeclareMathOperator{\poly}{poly}
\DeclareMathOperator{\WL}{WL}
\DeclareMathOperator{\Reg}{Reg}
\DeclareMathOperator{\qnrmaut}{QNRMAUT}
\DeclareMathOperator{\resolve}{RESOLVE}
\DeclareMathOperator{\pdbase}{PDBASE}
\DeclareMathOperator{\CRG}{CRG}
\DeclareMathOperator{\CGI}{CGI}
\DeclareMathOperator{\CGREC}{CGREC}
\def\@seccntformat#1{\csname the#1\endcsname. } 
\def\@biblabel#1{#1.}
\title{On Cayley representations of finite graphs over abelian $p$-groups}
\author{Grigory Ryabov}
\address{Novosibirsk State University, Novosibirsk, Russia}
\address{Sobolev Institute of Mathematics, Novosibirsk, Russia}
\email{gric2ryabov@gmail.com}
\thanks{The work is supported by the Russian Foundation for Basic Research (project 18-31-00051)}
\date{}
\newtheorem{prop}{Proposition}[section]
\newtheorem{theo}{Theorem}[section]
\newtheorem*{problem1}{Problem CGREC}
\newtheorem*{problem2}{Problem CGI}
\newtheorem*{problem3}{Problem CRG}
\newtheorem{lemm}[prop]{Lemma}
\theoremstyle{definition}
\begin{document}

\vspace{\baselineskip}
\vspace{\baselineskip}

\vspace{\baselineskip}

\vspace{\baselineskip}

\begin{abstract}
We construct a polynomial-time algorithm which given a graph $\Gamma$ finds the full set of non-equivalent Cayley representations of $\Gamma$ over the group $D\cong C_p\times C_{p^k}$, where $p\in\{2,3\}$ and $k\geq 1$. This result implies that the recognition and the isomorphism problems for Cayley graphs over $D$ can be solved in polynomial time.
\\
\\
\textbf{Keywords}: Coherent configurations, Cayley graphs, Cayley graph isomorphism problem.
\\
\textbf{MSC}:05E30, 05C60, 20B35.
\end{abstract}

\maketitle

\section{Introduction}

A \emph{Cayley representation} of a graph $\Gamma$ over a group $G$ is defined to be an isomorphism from $\Gamma$ to a Cayley graph over $G$ (here and further throughout the paper all the graphs and groups are assumed to be finite). Two Cayley representations of $\Gamma$ are called \emph{equivalent} if the images of $\Gamma$  under these representations are Cayley isomorphic, i.e., there exists a group automorphism of $G$ which is at the same time an isomorphism between the images. In the present paper we are interested in the following computational problem. 

\begin{problem3}
Given a group $G$ and a graph $\Gamma$  find a full set of non-equivalent Cayley representations of $\Gamma$ over $G$.
\end{problem3}

Here we assume that the group $G$ is given explicitly, i.e., by its multiplication table, and the graph $\Gamma$ is given by a binary relation. In the above form the Problem CRG was formulated in~\cite{NP}.

%The Problem CGR goes back to the Babai's work~\cite{Babai}. Namely, in~\cite[Problem~1.1]{Babai} Babai asked: given a group $G$ and a Cayley graph $\Gamma$ over $G$, are those of the form $\Gamma^{\alpha}$, $\alpha\in \aut(G)$, the only Cayley graphs over $G$ isomorphic to $\Gamma$? In our terms this means that $\Gamma$ has exactly one Cayley representation over $G$ up to equivalence. If every graph has at most one Cayley representation over a group $G$ up to equivalence then $G$ is called a \emph{CI-group}. CI-groups  have been extensively studied in many papers. We refer the reader to the survey~\cite{Li} for more information on CI-groups.

In general the Problem CRG seems to be very hard. Even the question whether a given graph has at least one Cayley representation over a given group leads to the recognition problem for Cayley graphs that can be formulated as follows. 

\begin{problem1}
Given a group $G$ and a graph $\Gamma$  test whether $\Gamma$ is isomorphic to a Cayley graph over $G$.
\end{problem1}

Another related problem is the isomorphism problem for Cayley graphs. In the following form this problem was formulated in~\cite{NP}.

\begin{problem2}
Given a group $G$, a Cayley graph over $G$, and an arbitrary graph test whether these two graphs are isomorphic.
\end{problem2}

For more information on the Problems CRG, CGREC, and CGI we refer the reader to~\cite{NP}.

One can check that the Problem CGI is reducible to the Problem CRG in polynomial time in the order of the group~$\aut(G)$. So if the group $G$ is generated by a set of at most constant size then the Problem CGI is polynomial-time reducible to the Problem CRG.

Denote the cyclic group of order~$n$ by $C_n$. The Problem CRG was solved efficiently for cyclic groups in~\cite{EP1} and for $C_2\times C_2\times C_p$, where $p$ is a prime, in~\cite{NP}. Up to now these results are the only published results concerned with solving the Problem CRG for infinite class of graphs. In the present paper we solve the Problem CRG for Cayley graphs over the group $D\cong C_p\times C_{p^k}$, where $p\in\{2,3\}$ and $k\geq 1$, in polynomial time. The above discussion implies that if the Problem CRG for $D$ can be solved in polynomial time then the Problems CGREC and CGI for $D$ also can be solved in polynomial time. The main result of the paper is given in the theorem below.

\begin{theo}\label{main}
For an explicitly given group $D\cong C_p\times C_{p^k}$ of order $n$, where $p\in\{2,3\}$ and $k\geq 1$, the Problems $\CRG$, $\CGREC$, and $\CGI$ can be solved in time $\poly(n)$.
\end{theo}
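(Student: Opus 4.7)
The plan is to establish Theorem~\ref{main} by reducing all three problems to a single enumeration task about coherent configurations and then solving that task via the theory of Schur rings (S-rings) over~$D$. First, $\CGREC$ is immediate from $\CRG$: a graph $\Gamma$ admits a Cayley representation over $D$ if and only if the list returned by $\CRG$ is nonempty. As remarked in the introduction, $\CGI$ is polynomial-time reducible to $\CRG$ whenever $|\aut(D)|$ is polynomially bounded in $n=|D|$; since $D$ has rank~$2$, one has $|\aut(D)|\leq n^{O(1)}$, so the reduction is efficient. It therefore suffices to give a $\poly(n)$-time algorithm for $\CRG$ over $D$.

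To solve $\CRG$, I would replace the input graph $\Gamma$ by its $2$-dimensional Weisfeiler-Leman closure $\cW(\Gamma)$, which is computable in polynomial time and satisfies $\aut(\cW(\Gamma))=\aut(\Gamma)$. A Cayley representation of $\Gamma$ over $D$ corresponds, up to equivalence, to a choice of regular subgroup $H\leq\aut(\cW(\Gamma))$ with $H\cong D$; two representations are equivalent precisely when the corresponding regular subgroups differ by an element of $\aut(\cW(\Gamma))$ that conjugates one to the other through a group isomorphism. The task therefore becomes: enumerate, up to this natural equivalence, all regular subgroups of $\aut(\cW(\Gamma))$ isomorphic to $D$, and finally extract an explicit connection set $S\subseteq D$ and an isomorphism $\Gamma\to\cay(D,S)$ for each equivalence class.

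The heart of the argument is the standard correspondence between regular subgroups of $\aut(\cW(\Gamma))$ isomorphic to $D$ and S-rings over~$D$ whose scheme is compatible with $\cW(\Gamma)$. For $p\in\{2,3\}$, the structure theory of S-rings over $D\cong C_p\times C_{p^k}$ is sufficiently rigid that one should be able to show (i) there are at most $\poly(n)$ equivalence classes of relevant S-rings, and (ii) each such class can be identified inside $\cW(\Gamma)$, and a concrete Cayley representation extracted, in polynomial time. This is the principal technical step and the main obstacle: one must analyse in detail the possible S-rings over $D$ when $p\in\{2,3\}$, presumably exploiting a wreath-type or $U/L$-decomposition theorem for S-rings over abelian $p$-groups of rank~$2$, and show that the restriction on $p$ makes the classification tractable (the difficulty for larger $p$ being, plausibly, the presence of too many exceptional non-schurian S-rings or a failure of the decomposition). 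Once the enumeration is established, the equivalence refinement via $\aut(D)$ is polynomial because $|\aut(D)|=n^{O(1)}$, and this completes the proof of Theorem~\ref{main}.
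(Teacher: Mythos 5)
Your framing is correct as far as it goes, and it coincides with the paper's reduction: $\CGREC$ is subsumed by $\CRG$, the $\CGI$-to-$\CRG$ reduction is efficient because $D$ is $2$-generated (equivalently, $|\aut(D)|$ is polynomial in $n$), and replacing $\Gamma$ by its Weisfeiler--Leman closure and invoking Babai's correspondence --- Cayley representations up to equivalence correspond to regular subgroups of $\aut(\cW(\Gamma))$ isomorphic to $D$ up to conjugacy --- is exactly how the paper reduces Theorem~\ref{main} to Theorem~\ref{main2}. The genuine gap is your ``heart of the argument'': the assertion that the rigidity of S-rings over $D$ lets one (i) enumerate the relevant classes and (ii) extract a concrete representation inside $\cW(\Gamma)$ in polynomial time is not a step one can fill in routinely --- it is the entire content of Sections~4--7 of the paper, and the naive version of (ii) in fact fails. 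The obstruction is separability: Cayley schemes over $D$ possessing a rank-$2$ section of composite degree (the \emph{singular} schemes of Section~5) admit algebraic isomorphisms not induced by combinatorial ones, so identifying the S-ring abstractly inside $\cW(\Gamma)$ does not yield a connection set or an isomorphism $\Gamma\to\cay(D,S)$; worse, in this case $\aut(\cW(\Gamma))$ contains large direct products of symmetric groups, so direct enumeration of regular subgroups is also hopeless even though the number of conjugacy classes is polynomial (Lemma~\ref{basesize}).

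What the paper actually supplies, and what your proposal is missing, is the algorithmic mechanism to get around this: the dichotomy of schemes in $\mathcal{K}_D$ into quasinormal and singular (Lemma~\ref{quassing}); Algorithm RESOLVE, which refines a singular scheme by adjoining a valency-one relation $R$, with correctness resting on the structural Lemma~\ref{autcayley} ($\aut(\mathcal{X})^{D/L}\geq\prod_{\Delta\in D/U}\sym(\Delta/L)$, which guarantees that every regular $D$-subgroup can be conjugated into $\aut(\WL(\mathcal{X},\{R\}))$, so no representation is lost); the observation that quasinormal schemes have solvable automorphism groups computable in polynomial time (Algorithm QNRMAUT together with \cite[Theorem~8.4]{EP1}, using that scheme sections of degree $p\in\{2,3\}$ and the Paley scheme are separable, normal, and have solvable automorphism groups); and then purely group-theoretic computation --- Kantor's Sylow algorithm, the recursive Algorithm PDBASE for $D$-bases of $p$-groups via cycle bases and Lemma~\ref{conjbase}, and conjugacy testing through the polynomial-size cosets of $C_{\sym(\Omega)}(g)\cong C_{n/p}\wr\sym(p)$. (Your final equivalence refinement via brute force over $\aut(D)$ would be fine once explicit connection sets are in hand, but that presupposes the extraction step.) As written, your proposal restates the theorem as its own ``principal technical step'' rather than proving it; also note that the role of $p\in\{2,3\}$ in the paper is not merely a tamer S-ring classification but the solvability of $\sym(p)$ and the specific structure theorems of \cite{MP,Ry1}, which drive Lemmas~\ref{Cayleyscheme}, \ref{primsection}, and~\ref{autcayley}.
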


It should be mentioned that the Problem CGI in case when both graphs are Cayley graphs over a cyclic group was solved independently in~\cite{M2}. The Problem CGI in case when both graphs are Cayley graphs over $D$ was solved in~\cite{Ry2}.

Let $G$ be a finite group. The key notion used in the proof of Theorem~\ref{main} is a \emph{$G$-base} of a permutation group; by definition, this is a maximal set of pairwise non-conjugated regular subgroups isomorphic to $G$ of a permutation group. The notion of a $G$-base was suggested in~\cite{EMP} as a generalization of the notion of a \emph{cycle base} (see~\cite{EP1,M1}) which is, in fact, a $G$-base for a cyclic group $G$. One can check that all $G$-bases of a permutation group $K$ have the same size. Denote this size by $b_G(K)$.

Note that a graph $\Gamma$ is isomorphic to a Cayley graph over a group $G$ if and only if the group $\aut(\Gamma)$ contains a regular subgroup isomorphic to $G$. In other words, $\Gamma$ is isomorphic to a Cayley graph over $G$ if and only if $b_G(\aut(\Gamma))\neq 0$. The Babai argument (\cite{Babai}) implies that there is a one-to-one correspondence between regular subgroups of $\aut(\Gamma)$ isomorphic to $G$ and Cayley representations of $\Gamma$ over $G$. In addition, two Cayley representations are equivalent if and only if the corresponding subgroups are conjugate in $\aut(\Gamma)$. Therefore for solving the Problem CRG it is sufficient to find a $G$-base of $\aut(\Gamma)$. Thus, Theorem~\ref{main} is  an immediate consequence of the following theorem. 

\begin{theo}\label{main2}
Suppose that a group $D\cong C_p\times C_{p^k}$ of order $n$, where $p\in\{2,3\}$ and $k\geq 1$, is given explicitly. Then a $D$-base of the automorphism group of a graph on $n$ vertices can be found in time $\poly(n)$.
\end{theo}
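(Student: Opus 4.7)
My first step would be to compute the coherent closure $\cX$ of $\Gamma$ in time $\poly(n)$ via the Weisfeiler--Leman procedure. Every regular permutation group on $\Omega$ is $2$-closed, so a regular subgroup of $\sym(\Omega)$ lies in $\aut(\Gamma)$ if and only if it lies in $\aut(\cX)$; moreover, two such regular subgroups are conjugate in $\aut(\Gamma)$ if and only if they are conjugate in $\aut(\cX)$, since any conjugating permutation preserves all basis relations of $\cX$ and hence preserves $\Gamma$. Consequently the $D$-bases of $\aut(\Gamma)$ and of $\aut(\cX)$ coincide, and the task reduces to computing a $D$-base of the automorphism group of a given coherent configuration on $n$ points.

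The second step would invoke the structural theory of coherent configurations carrying a regular subgroup isomorphic to $D$. Such a configuration corresponds to a Schur ring over $D$, and for $D=C_p\times C_{p^k}$ with $p\in\{2,3\}$ the classification of S-rings (developed by Kov\'acs, Muzychuk and further exploited in~\cite{Ry2}) shows that any nontrivial S-ring admits a canonical decomposition into a generalized wreath product or a tensor product along a proper $R$-invariant section, outside a short list of ``primitive'' configurations. My plan is to detect such a decomposition from $\cX$ via an $\aut(\cX)$-invariant equivalence relation on $\Omega$, recursively compute bases of the induced subconfigurations for each section of $D$, and glue the partial bases back into candidate regular $D$-subgroups of $\aut(\cX)$. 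Duplicates among these candidates are removed using the polynomial-time isomorphism test for Cayley graphs over $D$ from~\cite{Ry2}, which is available as a black box.

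The main difficulty lies in the primitive case, where $\cX$ admits no such combinatorial decomposition and the recursion bottoms out. In this situation one must show both that the set of regular $D$-subgroups of $\aut(\cX)$ has size polynomial in $n$ and that this set can be listed explicitly. This is precisely the point at which the hypothesis $p\in\{2,3\}$ becomes essential: for these small primes the classification of primitive S-rings over $C_p\times C_{p^k}$ is sufficiently restrictive to enable such an enumeration, whereas for larger primes the corresponding structure theory is considerably less tractable. Once this is established, the recursion depth is $O(k)$ with polynomial branching per node, yielding the overall $\poly(n)$ running time asserted in the theorem.
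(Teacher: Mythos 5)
Your first step coincides with the paper's, but the load-bearing part of your plan --- detect a generalized wreath or tensor decomposition of $\cX$, recurse on sections of $D$, and ``glue the partial bases back into candidate regular $D$-subgroups'' --- is precisely where the difficulty lives, and you supply no mechanism for it. A regular $D$-subgroup of $\aut(\cX)$ does not factor through the sections of a decomposition: when $\cX$ is a nontrivial generalized wreath product whose quotient section has rank~2 and composite degree, $\aut(\cX)$ contains direct products of large symmetric groups (this is the content of Lemma~\ref{autcayley}), so the candidate gluings are exponentially many and controlling them up to conjugacy is the whole problem, not a routine recursion step. The paper avoids gluing entirely: it calls a scheme with such a rank-2 composite-degree section \emph{singular} and repeatedly refines it by Algorithm RESOLVE (adjoin a relation built from fixed-point-free order-$p$ permutations $c_\Delta$ and take the Weisfeiler--Leman extension), with Lemma~\ref{autcayley} guaranteeing that every $G\in\Reg(\aut(\cX),D)$ is conjugate into the automorphism group of the refined configuration (Proposition~\ref{resolve}). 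Iterating yields a \emph{quasinormal} scheme, all of whose primitive sections have degree~$p$ or are Paley, whose automorphism group is computed inside an explicit solvable overgroup (Algorithm QNRMAUT together with \cite[Theorem~8.4]{EP1}); one then passes to a Sylow $p$-subgroup and runs PDBASE, a recursion over a block system with blocks of size~$p$, not over sections of $D$. Note also that your diagnosis of where $p\in\{2,3\}$ bites is inverted: by Lemma~\ref{primsection} the primitive sections are of rank~2, of degree~$p$, or Paley, so the genuinely primitive bottom cases are tiny or trivial; the hard case is the rank-2 section of composite degree, hard exactly because $\aut$ is enormous there, and your claim that the classification ``enables an enumeration'' in that case is unsubstantiated.

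Your deduplication step is also broken as stated. Any two candidates $G_1,G_2\in\Reg(\aut(\Gamma),D)$ produce Cayley graphs that are abstractly isomorphic by construction (both are isomorphic to $\Gamma$), so a black-box graph \emph{isomorphism} test for Cayley graphs over $D$ from \cite{Ry2} would always answer affirmatively and collapse your list to a single element. What is actually needed is conjugacy of $G_1$ and $G_2$ inside $\aut(\cX_0)$ --- equivalently, \emph{Cayley} isomorphism of the images --- and the paper implements this directly in Step~8 of the main algorithm: choose $g_1\in G_1$ of order $n/p$ and degree $n$, observe that $C_{\sym(\Omega)}(g_1)$ is permutationally isomorphic to $C_{n/p}\wr\sym(p)$ and hence has order polynomial in $n$ for constant $p$, enumerate the coset $C_{\sym(\Omega)}(g_1)h_0$ of all permutations conjugating $g_1$ to a prescribed element of $G_2$, filter by membership in $\aut(\cX_0)$, and test whether some survivor also conjugates the order-$p$ generator into $G_2$. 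Without such a conjugacy test your output need not consist of pairwise non-conjugate subgroups, and without a substitute for RESOLVE and PDBASE your recursion carries no proof of polynomial branching or termination; both load-bearing steps of the proposal are therefore genuine gaps.
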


Let us outline the proof of Theorem~\ref{main2}. Suppose that $\Gamma$ is a graph on $n$ vertices. Firstly we use the polynomial-time Weisfeiler-Leman algorithm~\cite{WeisL} to find the coherent configuration $\mathcal{X}$ (see Section~2 for exact definitions) corresponding to $\Gamma$ such that $\aut(\mathcal{X})=\aut(\Gamma)$. Put $K=\aut(\mathcal{X})$. A $D$-base of $K$ is not empty if and only if $\mathcal{X}$ is isomorphic to a Cayley scheme over~$D$. Further we use the classification of Cayley schemes over $D$  obtained in~\cite{MP} for $p=2$ and in~\cite{Ry1} for $p=3$ to construct efficiently a coherent configuration $\mathcal{X}^{'}$ such that (1) $K^{'}=\aut(\mathcal{X}^{'})$ is solvable; (2) $K^{'}\leq K$  and every $D$-base of $K^{'}$ contains a $D$-base of $K$ (Sections~3-5). The group $K^{'}$ is solvable and can be constructed efficiently. A $D$-base of $K^{'}$ is contained in a $D$-base of its Sylow $p$-subgroup $P$ and $P$ can be found by the polynomial-time Kantor's algorithm. In Section~6 we construct a polynomial-time algorithm for finding a $D$-base of a $p$-group. Applying this algorithm to  $P$, we obtain a $D$-base $B_D$ of $P$ containing a $D$-base of $K^{'}$ and hence a $D$-base of $K$. In Section~7 we summarize all above steps and show how to exclude from $B_D$ in polynomial time subgroups which are $K$-conjugate to other subgroup from $B_D$.

The author would like to thank prof. I Ponomarenko and prof. A. Vasil'ev for their valuable comments which allow to improve the text significantly.

~\

{\bf Notation.}

Given a finite set $\Omega$ the diagonal of $\Omega \times \Omega$ is denoted by $1_{\Omega}$.

For a set $T\subseteq 2^{\Omega}$ the set of all unions of the elements of $T$ is denoted by $T^{\cup}$.

If $s\subseteq \Omega \times \Omega$ and $S\subseteq 2^{\Omega \times \Omega}$ then set $s^{*}=\{(\beta,\alpha):(\alpha,\beta)\in s\}$ and $S^{*}=\{t^{*}:t\in S\}$.

Given $\alpha\in \Omega$ set $\alpha s=\{\beta\in \Omega: (\alpha,\beta)\in s\}$.

Given $r,s\subseteq \Omega \times \Omega$ set $rs=\{(\alpha,\gamma):(\alpha,\beta)\in r,~(\beta,\gamma)\in s~\text{for some}~\beta\in \Omega\}$.

Given $s\subseteq \Omega \times \Omega$ denote by $\langle s \rangle$ the equivalence closure of $s$, i.e. the smallest equivalence relation on $\Omega$ containing $s$.

If $E$ is an equivalence relation on $\Omega$ then the set of all classes of $E$ is denoted by $\Omega/E$.

%Given $\Delta\subseteq \Omega$ denote the set $\{\Lambda\in \Omega/E: \Lambda\cap \Delta \neq \varnothing\}$ by $\Delta/E$.

Given $s\subseteq \Omega \times \Omega$ set $s_{\Omega/E}=\{(\Lambda,\Delta)\in \Omega/E \times \Omega/E: s_{\Lambda,\Delta}\neq \varnothing\}$, where $s_{\Delta,\Lambda}=s\cap \Delta \times \Lambda$. Also set $s_{\Delta}=s_{\Delta,\Delta}$.

If $S\subseteq 2^{\Omega \times \Omega}$ and $\Delta\in \Omega_E$ then denote the sets $\{s_{\Omega/E}:s\in S, s_{\Omega/E}\neq \varnothing\}$ and $\{s_{\Delta}:s\in S, s_{\Delta}\neq \varnothing\}$ by $S_{\Omega/E}$ and $S_{\Delta}$ respectively.

The group of all permutations of a set $\Omega$ is denoted by $\sym(\Omega)$.

If $K\leq \sym(\Omega)$, $\alpha\in \Omega$, and $\Delta\subseteq \Omega$ then the one-point stabilizer of $\alpha$ and the setwise stabilizer of $\Delta$ in $K$ are denoted by $K_{\alpha}$ and $K_{\Delta}$ respectively.

The set of all orbits of $K\leq \sym(\Omega)$  is denoted by $\orb(K,\Omega)$.

If $K\leq \sym(\Omega)$ and $H$ is a group then the set of all regular subgroups of $K$ isomorphic to $H$ is denoted by $\Reg(K,H)$.

The set of non-identity elements of a group $G$ is denoted by  $G^\#$.

If $g\in G$ then the centralizer of $g$ in $G$ is denoted by $C_G(g)$.

If $H \leq G$ then  the centralizer and the normalizer of $H$ in $G$ are denoted by $C_G(H)$ and $N_G(H)$ respectively. 

The group $\{x\mapsto xg,~x\in G:g\in G\}$ of right translations  of $G$ is denoted by $G_{right}$.

Given $X\subseteq G$ denote by $s(X)$ the set $\{(g,xg): g\in G, x\in X\}\subseteq G\times G$ of edges of the Cayley graph $\cay(G,X)$.

For a set $\Delta\subseteq \sym(G)$ and a section $S=U/L$ of~$G$ set $\Delta^S=\{f^S:~f\in \Delta,~S^f=S\}$, where $S^f=S$ means that $f$ permutes the $L$-cosets in $U$ and $f^S$ denotes the bijection of $S$ induced by $f$.

The cyclic group of order $n$ is denoted by $C_n$.

\section{Coherent configurations}
In this section we give a background on coherent configurations. We use the notation and terminology from~\cite{CP}, where the most part of the material is contained. More information on coherent configurations can be found also in~\cite{EP2,NP}.

\subsection{Definitions}
Let $\Omega$ be a finite set of cardinality $n\geq 1$ and $S$ a partition of $\Omega\times \Omega$. A pair $\mathcal{X}=(\Omega,S)$ is called a \emph{coherent configuration} on $\Omega$ if $1_{\Omega}\in S^{\cup}$, $S^{*}=S$, and given $r,s,t\in S$ the number
$$c_{rs}^t=|\alpha r\cap \beta s^{*}|$$
does not depend on the choice of $(\alpha,\beta)\in t$. The elements of $\Omega$, elements of $S$, and numbers $c_{rs}^t$ are called the \emph{points}, \emph{basis relations}, and \emph{intersection numbers} of $\mathcal{X}$ respectively. The numbers $|\Omega|$ and $|S|$ are called the \emph{degree} and \emph{rank} of $\mathcal{X}$ respectively. Denote the rank of $\mathcal{X}$ by $\rk(\mathcal{X})$.   

The coherent configuration $\mathcal{X}=(\Omega,S)$ is said to be \emph{trivial} if $n=1$ or $\rk(\mathcal{X})=2$. We say that $\mathcal{X}$ is \emph{discrete} if $\rk(\mathcal{X})=n^2$, i.e. every element of $S$ is singleton. Denote the trivial and discrete coherent configurations on $\Omega$ by $\mathcal{T}_{\Omega}$ and $\mathcal{D}_{\Omega}$ respectively.

A set $\Delta \subseteq \Omega$ is called a \emph{fiber} of $\mathcal{X}$ if $1_{\Delta}\in S$. The set of all fibers of $\mathcal{X}$ is denoted by $F(\mathcal{X})$. Note that $\Omega$ is a disjoint union of all elements of $F(\mathcal{X})$. For every $r\in S$ there exist uniquely determined fibers $\Delta$ and $\Lambda$ such that $r\subseteq \Delta \times \Lambda$. This implies that $S$ is a disjoint union of the sets 
$$S_{\Delta,\Lambda}=\{r\in S:~r\subseteq \Delta \times \Lambda\}.$$
The number $c_{rr^{*}}^{1_{\Omega}}$ is called the \emph{valency} of $r$ and denoted by $n_r$. It is easy to see that $n_r=|\alpha r|$ for every $\alpha \in \Delta$. Given $T\in S^{\cup}$ the sum of all valences $n_t$, where $t$ runs over all basis relations inside $T$, is denoted by $n_T$. 

We say that $\mathcal{X}$ is  \emph{homogeneous} or $\mathcal{X}$ is a \emph{scheme} if $1_{\Omega}\in S$. If  $\mathcal{X}$ is a scheme then $n_r=n_{r^{*}}$ for every $r\in S$. We say that $\mathcal{X}$ is \emph{commutative} if $c_{rs}^t=c_{sr}^t$ for all $r,s,t\in S$. One can check that every commutative coherent configuration is a scheme.

The set of all equivalence relations $E\in S^{\cup}$ is denoted by $\mathcal{E}(\mathcal{X})$. The coherent configuration $\mathcal{X}$ is said to be \emph{primitive} if $\mathcal{E}(\mathcal{X})=\{1_{\Omega},\Omega\times \Omega\}$. It is easy to see that every primitive coherent configuration is a scheme. A scheme which is not primitive is said to be \emph{imprimitive}. One can verify that $\langle s \rangle \in \mathcal{E}(\mathcal{X})$ for every $s\in S^{\cup}$. 

Let $s\subseteq \Omega^2$. The largest relation $r\subseteq \Omega^2$ such that $sr=sr=s$ is called the \emph{radical} of $s$ and denoted by $\rad(s)$. Clearly, $1_{\Omega}\subseteq \rad(s)$ for every $s\subseteq \Omega^2$. One can check that if $s\in S^{\cup}$ then $\rad(s)\in \mathcal{E}(\mathcal{X})$.

\subsection{Isomorphisms}

Let $\mathcal{X}=(\Omega,S)$ and $\mathcal{X}^{'}=(\Omega^{'},S^{'})$ be coherent configurations. An \emph{algebraic isomorphism} from $\mathcal{X}$ to $\mathcal{X}^{'}$ is defined to be a bijection $\varphi:S\rightarrow S^{'}$ such that
$$c_{rs}^t=c_{r^{\varphi},s^{\varphi}}^{t^{\varphi}}$$
for every $r,s,t\in S$. In this case $\rk(\mathcal{X})=\rk(\mathcal{X}^{'})$, $|\Omega|=|\Omega^{'}|$, and $\mathcal{X}$ and $\mathcal{X}^{'}$ are homogeneous or not simultaneously. Every algebraic isomorphism is extended to a bijection from $\mathcal{E}(\mathcal{X})$ to $\mathcal{E}(\mathcal{X}^{'})$. This implies that $\mathcal{X}$ and $\mathcal{X}^{'}$ are primitive or not simultaneously.

An \emph{isomorphism} from $\mathcal{X}$ to $\mathcal{X}^{'}$ is defined to be a bijection $f:\Omega\rightarrow \Omega^{'}$ such that $S^{'}=S^f$, where $S^f=\{s^f:~s\in S\}$ and $s^f=\{(\alpha^f,~\beta^f):~(\alpha,~\beta)\in s\}$. In this case we say that $\mathcal{X}$ and $\mathcal{X}^{'}$ are \emph{isomorphic} and write $\mathcal{X}\cong\mathcal{X}^{'}$. The group $\iso(\mathcal{X})$ of all isomorphisms from $\mathcal{X}$ onto itself has a normal subgroup
$$\aut(\mathcal{X})=\{f\in \iso(\mathcal{X}): s^f=s~\text{for every}~s\in S\}.$$
This subgroup is called the \emph{automorphism group} of $\mathcal{X}$ and denoted by $\aut(\mathcal{X})$; the elements of $\aut(\mathcal{X})$ are called \emph{automorphisms} of $\mathcal{X}$. It is easy to see that if $\rk(\mathcal{X})=2$ then $\aut(\mathcal{X})=\sym(\Omega)$. If  $\aut(\mathcal{X})$ is transitive and $E\in \mathcal{E}(\mathcal{X})$ then the classes of $E$ are blocks of $\aut(\mathcal{X})$. Given $f\in\sym(\Omega)$ one can test whether $f\in \aut(\mathcal{X})$ in time $\poly(n)$ by testing for every $s\in S$ whether $s^f=s$.

Every isomorphism of coherent configurations induces in a natural way the algebraic isomorphism of them. However, not every algebraic isomorphism is induced by a combinatorial one (see~\cite[Section~4.2]{EP2}). A coherent configuration is called \emph{separable} if every algebraic isomorphism from it to another coherent configuration is induced by an isomorphism. Observe that $\mathcal{T}_{\Omega}$ and $\mathcal{D}_{\Omega}$ are separable.

\subsection{Restrictions and quotients}
Let $E\in \mathcal{E}(\mathcal{X})$ and $\Delta\in \Omega/E$. Then the pair
$$\mathcal{X}_{\Delta}=(\Delta, S_{\Delta})$$ 
is a coherent configuration called the \emph{restriction}  of $\mathcal{X}$ on $\Delta$. If $E$ is the union of $\Lambda \times \Lambda$, where $\Lambda \in F(\mathcal{X})$, and $\Delta\in F(\mathcal{X})$ then $\mathcal{X}_{\Delta}$ is called the \emph{homogeneous component} of $\mathcal{X}$. 
If $k\in \aut(\mathcal{X})_{\Delta}$ and $K\leq \aut(\mathcal{X})$ then denote by $k^{\Delta}$ and $K^{\Delta}$ the permutation induced by the action of~$k$ on $\Delta$ and the permutation group induced by the action of  $K_{\Delta}$ on $\Delta$ respectively. It is easy to see that
$$\aut(\mathcal{X})^{\Delta}\leq \aut(\mathcal{X}_{\Delta}).$$

Let $\mathcal{X}$ be  a scheme. Then the pair
$$\mathcal{X}_{\Omega/E}=(\Omega/E, S_{\Omega/E})$$ 
is a coherent configuration called the \emph{quotient}  of $\mathcal{X}$ modulo $E$. If $k\in \aut(\mathcal{X})$ and $K\leq \aut(\mathcal{X})$ then denote by $k^{\Omega/E}$ and $K^{\Omega/E}$ the permutation induced by the action of~$k$ on $\Omega/E$ and the permutation group induced by the action of $K$ on $\Omega/E$ respectively. Clearly, 
$$\aut(\mathcal{X})^{\Omega/E}\leq \aut(\mathcal{X}_{\Omega/E}).$$

Let $F\in \mathcal{E}(\mathcal{X})$ and $F\subseteq E$. Obviously, $E_{\Omega/F}\in \mathcal{E}(\mathcal{X}_{\Omega/F})$. It can be checked in a straightforward way that
$$\mathcal{X}_{(\Omega/F)/(E/F)}\cong \mathcal{X}_{\Omega/E}.~\eqno(1)$$ 
The relation $F_{\Delta}$ belongs to $\mathcal{E}(\mathcal{X}_{\Delta})$. The set $\Delta/F_{\Delta}$ is a class of the equivalence relation $E_{\Omega/F}$ which belongs to $\mathcal{E}(\mathcal{X}_{\Omega/F})$. One can check that $(\mathcal{X}_{\Delta})_{\Delta/F_{\Delta}}=(\mathcal{X}_{\Omega/F})_{\Delta/F_{\Delta}}$. The coherent configuration defined in this equality is denoted by $\mathcal{X}_{\Delta/F}$ and called a \emph{section} of $\mathcal{X}$. The sets of all sections of $\mathcal{X}$ and all primitive sections of $\mathcal{X}$ are denoted by $\mathcal{Q}(\mathcal{X})$ and $\mathcal{Q}(\mathcal{X})_{prim}$ respectively. 

If $k\in \aut(\mathcal{X})_{\Delta}$ and $K\leq \aut(\mathcal{X})$ then denote by $k^{\Delta/F}$ and $K^{\Delta/F}$ the permutation induced by the action of~$k$ on $\Delta/F_{\Delta}$ and the permutation group induced by the action of $K_{\Delta}$ on $\Delta/F_{\Delta}$ respectively. If $\mathcal{X}_{\Delta/F}$ is a section of $\mathcal{X}$ then 
$$\aut(\mathcal{X})^{\Delta/F}\leq \aut(\mathcal{X}_{\Delta/F}).$$

One can check  that for every $\Delta^{'}\in \Omega/E$ the bijection 
$$s_{\Delta}\mapsto s_{\Delta^{'}}$$
from $S_{\Delta}$ to $S_{\Delta^{'}}$ is an algebraic isomorphism from $\mathcal{X}_{\Delta}$ to $\mathcal{X}_{\Delta^{'}}$. So $\mathcal{X}_{\Delta^{'}/F}$ is algebraically isomorphic to  $\mathcal{X}_{\Delta/F}$ for every $\Delta^{'}\in \Omega/E$. In particular, $|\Delta^{'}/F_{\Delta^{'}}|=|\Delta/F_{\Delta}|$ and $\mathcal{X}_{\Delta^{'}/F}$ is primitive (of rank~2) if and only if $\mathcal{X}_{\Delta/F}$ is primitive (of rank~2). This implies the following statement.

\begin{lemm}\label{primitive}
Let $\mathcal{X}$ be a scheme and $E,F\in \mathcal{E}(\mathcal{X})$ such that $F\subset E$. Then there exists $R\in \mathcal{E}(\mathcal{X})$ with $F \subsetneq R  \subsetneq E$ if and only if there exists $\Delta \in \Omega/E$ such that $\mathcal{X}_{\Delta/F}$ is imprimitive.
\end{lemm}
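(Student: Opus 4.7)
The plan is to prove the two directions separately, based on the following key observation: \emph{every basis relation $s \in S$ contained in $E$ meets every $E$-class}, i.e.\ $s_\Delta \neq \varnothing$ for all $\Delta \in \Omega/E$. Indeed, $s \subseteq E$ forces $s_{\Omega/E} \subseteq 1_{\Omega/E}$, and since both are basis relations of the scheme $\mathcal{X}_{\Omega/E}$ and distinct basis relations are disjoint, $s_{\Omega/E} = 1_{\Omega/E}$. In particular the bijection $s_\Delta \mapsto s_{\Delta'}$ between $S_\Delta$ and $S_{\Delta'}$ from the paragraph preceding the lemma is indexed by the single set $\{s \in S : s \subseteq E\}$ and furnishes an algebraic isomorphism $\mathcal{X}_\Delta \to \mathcal{X}_{\Delta'}$.

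For the forward direction, suppose $R \in \mathcal{E}(\mathcal{X})$ satisfies $F \subsetneq R \subsetneq E$ and fix any $\Delta \in \Omega/E$. Since $F \subsetneq R$, $R \subsetneq E$, and all three relations are unions of basis relations, there exist $s \in S$ with $s \subseteq R$ and $s \cap F = \varnothing$, and $s' \in S$ with $s' \subseteq E$ and $s' \cap R = \varnothing$. Both $s$ and $s'$ lie in $E$, so by the observation $s_\Delta, s'_\Delta \neq \varnothing$, giving $\varnothing \neq s_\Delta \subseteq R_\Delta \setminus F_\Delta$ and $\varnothing \neq s'_\Delta \subseteq (\Delta \times \Delta) \setminus R_\Delta$. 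Thus $F_\Delta \subsetneq R_\Delta \subsetneq \Delta \times \Delta$, and the image of $R_\Delta$ in the section $\mathcal{X}_{\Delta/F}$ is a nontrivial element of $\mathcal{E}(\mathcal{X}_{\Delta/F})$, showing that $\mathcal{X}_{\Delta/F}$ is imprimitive.

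For the reverse direction, take a nontrivial $\bar R \in \mathcal{E}(\mathcal{X}_{\Delta/F})$, lift it to an equivalence $R' \in \mathcal{E}(\mathcal{X}_\Delta)$ with $F_\Delta \subsetneq R' \subsetneq \Delta \times \Delta$, and set
\[
R \;=\; \bigcup \{\, s \in S : s \subseteq E \text{ and } s_\Delta \subseteq R' \,\}.
\]
Then $R$ is a union of basis relations with $F \subseteq R \subseteq E$, and the strict inclusions $F \subsetneq R \subsetneq E$ follow from the strict inclusions defining $R'$ by the same argument as in the forward direction. The principal obstacle is to verify that $R$ is an equivalence relation on $\Omega$; for this, note that for every $\Delta' \in \Omega/E$ the observation yields
\[
R \cap (\Delta' \times \Delta') \;=\; \bigcup \{\, s_{\Delta'} : s \in S,\ s \subseteq E,\ s_\Delta \subseteq R' \,\},
\]
which is exactly the image of $R'$ under the algebraic isomorphism $\mathcal{X}_\Delta \to \mathcal{X}_{\Delta'}$ identified above. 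Since algebraic isomorphisms preserve $\mathcal{E}$ (reflexivity, symmetry and transitivity being encoded by the intersection numbers), each such restriction is an equivalence on $\Delta'$; as $R \subseteq E$, these restrictions are mutually disjoint and together make $R$ into an equivalence on $\Omega$.
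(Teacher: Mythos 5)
Your proposal is correct and takes essentially the same route the paper intends: the paper presents the lemma as an immediate consequence of the algebraic isomorphisms $s_\Delta \mapsto s_{\Delta'}$ between $\mathcal{X}_\Delta$ and $\mathcal{X}_{\Delta'}$ established in the preceding paragraph, and your write-up simply makes both directions explicit --- projecting an intermediate $R$ into one section for the forward direction, and gluing a local equivalence into a global one via those isomorphisms for the converse. The supporting facts you invoke (every basis relation inside $E$ meets every $E$-class because the quotient $\mathcal{X}_{\Omega/E}$ is a scheme, and algebraic isomorphisms induce bijections between the sets $\mathcal{E}$) are exactly the ingredients the paper records in Section~2, so there is nothing to correct.
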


Given a coherent configuration $\mathcal{X}$ on $\Omega$ denote by $\mathcal{F}(\mathcal{X})$ the set of all pairs  $(F,E)\in \mathcal{E}(\mathcal{X})^2$ such that $F\subseteq E$ and for every $\Delta \in \Omega/E$ the section $\mathcal{X}_{\Delta/F}$ has a composite degree and rank~2. Put 
$$m=\min \limits_{(F,E)\in \mathcal{F}(\mathcal{X})} |E|~\text{and}~\mathcal{F}_{\min}(\mathcal{X})=\{(F,E)\in \mathcal{F}(\mathcal{X}): |E|=m\}.$$  
Observe that $(F,E)\in \mathcal{F}(\mathcal{X})$ if and only if $F\subseteq E$ and for some $\Delta \in \Omega/E$ the section $\mathcal{X}_{\Delta/F}$ has a composite degree  and rank~2 because for every $\Delta^{'}\in \Omega/E$ the section $\mathcal{X}_{\Delta^{'}/F}$ is algebraically isomorphic to  $\mathcal{X}_{\Delta/F}$ and hence $\mathcal{X}_{\Delta^{'}/F}$ also has a composite degree and  rank~2.

\subsection{Wreath and tensor products}

Let $\mathcal{X}_1=(\Omega_1,S_1)$ and $\mathcal{X}_2=(\Omega_2,S_2)$ be coherent configurations. Put $S_1\otimes S_2=\{s_1\otimes s_2: s_1\in S_1,s_2\in S_2\}$, where $s_1\otimes s_2=\{((\alpha_1,\alpha_2),(\beta_1,\beta_2)): (\alpha_1,\beta_1)\in s_1, (\alpha_2,\beta_2)\in s_2\}$. Then the pair
$$\mathcal{X}_1\otimes \mathcal{X}_2=(\Omega_1\times \Omega_2, S_1\otimes S_2)$$
is a coherent configuration called the \emph{tensor product} of $\mathcal{X}_1$ and $\mathcal{X}_2$. It can be verified that 
$$\aut(\mathcal{X}_1\otimes \mathcal{X}_2)=\aut(\mathcal{X}_1)\times \aut(\mathcal{X}_2).$$

Let $\mathcal{X}=(\Omega,S)$ be a scheme and $E,F\in \mathcal{E}(\mathcal{X})$ with $F\subseteq E$. The scheme $\mathcal{X}$ is defined to be the \emph{$E/F$-wreath product} if $s\cap E=\varnothing$ implies that
$$s=\bigcup \limits_{(\Delta,\Lambda)\in s_{\Omega/F}} \Delta \times \Lambda$$ 
for every $s\in S$. Note that $F\subseteq \rad(s)$ for every $s\in S$ outside $E$. When the explicit indication of the equivalence relations $E$ and $F$ are not important we use the term \emph{generalized wreath product}. The $E/F$-wreath product is said to be \emph{trivial} if $F=1_{\Omega}$ or $E=\Omega\times \Omega$ and \emph{nontrivial} otherwise. Clearly, the nontrivial generalized wreath product is imprimitive.

Let $\Delta\in \Omega/E$. If $E=F$ and $\mathcal{X}_{\Delta}\cong \mathcal{X}_{\Delta^{'}}$ for every $\Delta^{'}\in \Omega/E$ then the generalized wreath product coincides with the standard wreath product of $\mathcal{X}_{\Delta}$ and $\mathcal{X}_{\Omega/E}$ (see~\cite[p.45]{Weis}).  In this case we write $\mathcal{X}=\mathcal{X}_{\Delta} \wr \mathcal{X}_{\Omega/E}$. One can check that if $\mathcal{X}=\mathcal{X}_{\Delta} \wr \mathcal{X}_{\Omega/E}$  then
$$\aut(\mathcal{X}_{\Delta}\wr \mathcal{X}_{\Omega/E})=\aut(\mathcal{X}_{\Delta})\wr \aut(\mathcal{X}_{\Omega/E}),$$
where the wreath product of two permutation groups in the right-hand side acts imprimitively.

\subsection{Algorithms}

A coherent configuration $\mathcal{X}=(\Omega,S)$ on $n$ points will always be given by the list of its basis relations. In this representation one can test in time $\poly(n)$ whether $\mathcal{X}$ is commutative, homogeneous, etc. Also in the same time one can list all elements of $F(\mathcal{X})$ and construct the restriction $\mathcal{X}_{\Delta}$ for every $\Delta\in F(\mathcal{X})^{\cup}$.

Let $s\subset \Omega \times \Omega$. The classes of $\langle s \rangle$ coincide with the connected components of the graph on $\Omega$ with the edge set $s\cup s^{*}$. So $\langle s \rangle$ can be constructed efficiently. Note that $\mathcal{X}$ is primitive if and only if $\langle s \rangle=\Omega\times \Omega$ for every nontrivial $s\in S$. Since $|S|\leq n^2$, one can test whether $\mathcal{X}$ is primitive in time $\poly(n)$.

If $E_1$ and $E_2$ are equivalences on $\Omega$ then $\langle E_1\cup E_2 \rangle$ is the smallest equivalence on $\Omega$ whose classes are unions of classes of $E_1$ and $E_2$. Since $\langle s \rangle\in \mathcal{E}(\mathcal{X})$ for every $s\in S^{\cup}$, every $E\in \mathcal{E}(\mathcal{X})\setminus \{1_{\Omega}\}$ is of the form $E=\langle E_1\cup s \rangle$, where $E_1$ is a maximal element of the set $\{E^{'}\in \mathcal{E}(\mathcal{X}):E^{'}\subset E, E^{'}\neq E\}$ and $s\in S$. Thus, all elements of $\mathcal{E}(\mathcal{X})$ can be listed in polynomial time in $n$ and $|\mathcal{E}(\mathcal{X})|$. 

Clearly, given $E\in \mathcal{E}(\mathcal{X})$ one can list all classes of $E$ and construct the quotient $\mathcal{X}_{\Omega/E}$ in time $\poly(n)$. Given $E,F\in \mathcal{E}(\mathcal{X})$ with $F\subseteq E$ and $\Delta\in \Omega/E$ the section $\mathcal{X}_{\Delta/F}$ also can be constructed in time $\poly(n)$.

We say that $\mathcal{X}$ is \emph{feasible} if $\mathcal{X}$ is  commutative and 
$$\mathcal{E}(\mathcal{X})=\{\langle r\cup s\rangle: r,s\in S\}.$$ 
Every feasible coherent configuration is a scheme because it is commutative. Observe that a commutative scheme $\mathcal{X}$ is feasible if and only if $\{\langle r \cup s\rangle: r,s\in S\}=\{\langle r\cup s \cup t\rangle: r,s,t\in S\}$.  The last condition can be verified in time $\poly(n)$ because $|S|\leq n^2$.  If $\mathcal{X}$ is feasible then the sets $\mathcal{E}(\mathcal{X})$, $\mathcal{Q}(\mathcal{X})$, and $\mathcal{Q}(\mathcal{X})_{prim}$ have the sizes  polynomial in~$n$. So the above discussion implies the following lemma.

\begin{lemm}\label{feastest}
Given a  coherent configuration $\mathcal{X}$ on $n$ points one can test in time $\poly(n)$ whether $\mathcal{X}$ is feasible and if so list all elements of $\mathcal{E}(\mathcal{X})$, $\mathcal{Q}(\mathcal{X})$, and $\mathcal{Q}(\mathcal{X})_{prim}$ within the same time.
\end{lemm}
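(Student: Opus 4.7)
The plan is to assemble the lemma from the ingredients already laid out in the paragraph immediately preceding it, verifying that each subroutine fits inside a polynomial budget, and then using feasibility to bound the combinatorial objects that have to be enumerated.

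First I would dispose of the feasibility test itself. Commutativity is checked by comparing intersection numbers $c_{rs}^t$ with $c_{sr}^t$ for all triples $r,s,t \in S$, which takes time $\poly(n)$ since $|S|\le n^2$. Assuming commutativity (so $\mathcal{X}$ is a scheme), by the observation in the preceding paragraph it suffices to verify that
\[
\{\langle r\cup s\rangle : r,s\in S\}=\{\langle r\cup s\cup t\rangle : r,s,t\in S\}.
\]
Each equivalence closure $\langle \cdot \rangle$ is built as the connected-components partition of the underlying (symmetrized) graph on $\Omega$, which costs $\poly(n)$. The family on the left has at most $|S|^2\le n^4$ entries and the one on the right at most $|S|^3\le n^6$ entries, so both can be built, deduplicated (two equivalences are compared by their partition into classes in time $O(n)$), and compared in $\poly(n)$ time. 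This settles the feasibility test.

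Next, assuming $\mathcal{X}$ is feasible, by definition $\mathcal{E}(\mathcal{X})=\{\langle r\cup s\rangle : r,s\in S\}$, so $|\mathcal{E}(\mathcal{X})|\le n^4$ and the set $\mathcal{E}(\mathcal{X})$ is listed as a byproduct of the test above. For $\mathcal{Q}(\mathcal{X})$, recall that a section is $\mathcal{X}_{\Delta/F}$ with $F,E\in\mathcal{E}(\mathcal{X})$, $F\subseteq E$, and $\Delta\in\Omega/E$. The number of such triples $(F,E,\Delta)$ is at most $|\mathcal{E}(\mathcal{X})|^2\cdot n\le n^9$, each section is constructed in $\poly(n)$ time by the remarks preceding the lemma, and duplicates can be removed by a straightforward comparison in $\poly(n)$ per pair. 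Hence $\mathcal{Q}(\mathcal{X})$ is listed in $\poly(n)$ time.

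Finally, to obtain $\mathcal{Q}(\mathcal{X})_{prim}$, I would run the primitivity test on each section of $\mathcal{Q}(\mathcal{X})$: as noted earlier, primitivity of a coherent configuration on $m$ points amounts to checking that $\langle s\rangle=\Omega\times\Omega$ for every nontrivial basis relation, which costs $\poly(m)\le \poly(n)$. Since $|\mathcal{Q}(\mathcal{X})|=\poly(n)$, the filtering takes $\poly(n)$ overall. No step presents a real obstacle; the only subtle point is noticing that feasibility is exactly the hypothesis needed to bound $|\mathcal{E}(\mathcal{X})|$ polynomially, which in turn keeps $|\mathcal{Q}(\mathcal{X})|$ and $|\mathcal{Q}(\mathcal{X})_{prim}|$ polynomial and makes the whole enumeration efficient.
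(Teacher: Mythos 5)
Your proposal is correct and takes essentially the same route as the paper, which derives the lemma directly from the preceding discussion in Subsection~2.5: test commutativity via the intersection numbers, test feasibility via the criterion $\{\langle r\cup s\rangle : r,s\in S\}=\{\langle r\cup s\cup t\rangle : r,s,t\in S\}$ with $|S|\le n^2$, read off $\mathcal{E}(\mathcal{X})$ from the pairwise closures, enumerate sections over triples $(F,E,\Delta)$, and filter $\mathcal{Q}(\mathcal{X})_{prim}$ with the connected-components primitivity test. You merely make explicit the polynomial counts and deduplication steps that the paper leaves implicit.
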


 We finish this subsection with the lemma concerned with feasible schemes.

\begin{lemm}\label{maxpath}
Let $\mathcal{X}$ be a feasible scheme on $n$ points. Then one can find a maximal path $1_{\Omega}=E_0\subseteq \ldots \subseteq E_s=\Omega^2$ in $\mathcal{E}(\mathcal{X})$ in time $\poly(n)$.
\end{lemm}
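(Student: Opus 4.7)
The plan is to reduce the problem to a straightforward greedy search in the poset $(\mathcal{E}(\mathcal{X}),\subseteq)$ after listing all of its elements. The hypothesis that $\mathcal{X}$ is feasible is crucial: by Lemma~\ref{feastest} applied to $\mathcal{X}$, the entire set $\mathcal{E}(\mathcal{X})$ can be produced in time $\poly(n)$, and in particular $|\mathcal{E}(\mathcal{X})|$ is bounded by a polynomial in $n$. So I would first invoke that lemma to obtain an explicit list $\mathcal{L}$ of all equivalence relations in $S^{\cup}$.

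Next I would build the maximal chain greedily. Set $E_0 = 1_{\Omega}$. Given $E_i \in \mathcal{L}$ with $E_i \ne \Omega\times\Omega$, consider the nonempty subfamily
\[
\mathcal{L}_i \;=\; \{E \in \mathcal{L} : E \supsetneq E_i\},
\]
which contains $\Omega\times\Omega$ and is therefore nonempty. Pick $E_{i+1}$ to be any $\subseteq$-minimal element of $\mathcal{L}_i$; this minimality is checked by, for each candidate $E \in \mathcal{L}_i$, comparing it with every other $E' \in \mathcal{L}_i$ and verifying that no $E' \in \mathcal{L}_i$ satisfies $E_i \subsetneq E' \subsetneq E$. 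Since $|\mathcal{L}| = \poly(n)$ and comparison of two equivalence relations on $\Omega$ is done in time $O(n^2)$, each step costs $\poly(n)$. Iterating, the chain $E_0 \subsetneq E_1 \subsetneq \dots$ terminates at $E_s = \Omega\times\Omega$ after at most $|\mathcal{L}|$ steps, hence in time $\poly(n)$ overall.

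Finally, the resulting chain is maximal by construction: if some $R \in \mathcal{E}(\mathcal{X})$ satisfied $E_i \subsetneq R \subsetneq E_{i+1}$, then $R$ would lie in $\mathcal{L}_i$, contradicting the $\subseteq$-minimality of $E_{i+1}$ in $\mathcal{L}_i$. I do not expect a serious obstacle here: the nontrivial content is hidden in Lemma~\ref{feastest}, which guarantees that the lattice $\mathcal{E}(\mathcal{X})$ for a feasible scheme is small enough and accessible enough that this greedy construction terminates in polynomial time.
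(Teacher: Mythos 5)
Your proof is correct, and it takes a mildly different route from the paper's. The paper forms the directed graph on vertex set $\mathcal{E}(\mathcal{X})$ with an edge $(E_0,E_1)$ whenever $E_0\subsetneq E_1$, observes that this is a directed acyclic graph, and extracts a maximal path by the standard longest-path computation in a DAG, with the polynomial bound on $|\mathcal{E}(\mathcal{X})|$ for feasible schemes doing the rest; you instead walk up the poset greedily, at each stage choosing a $\subseteq$-minimal element among those strictly above the current one. Both arguments rest on the same nontrivial input, Lemma~\ref{feastest}, which you correctly identify as the place where feasibility is used. Your greedy version has the small advantage that it certifies non-refinability directly and locally (no $R\in\mathcal{E}(\mathcal{X})$ fits strictly between consecutive terms), which is exactly the property consumed downstream in Proposition~\ref{qnrmaut} via Lemma~\ref{primitive}; the paper's version produces a maximum-length chain, which is a fortiori saturated but requires the (easy) DAG machinery. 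One point worth making explicit, which your argument handles implicitly: the output chain automatically has the required endpoints, since $E_0=1_{\Omega}$ is set by fiat and the loop can only halt at $\Omega\times\Omega$ because every $\mathcal{L}_i$ contains $\Omega\times\Omega$. Also note that termination in at most $n^2$ steps follows even without the size bound on $\mathcal{L}$, since a strictly increasing chain of equivalence relations on $n$ points has length less than $n$; but your bound via $|\mathcal{L}|$ is equally good for the stated purpose.
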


\begin{proof}
Let $\Gamma$ be a directed graph with the vertex set $\mathcal{E}(\mathcal{X})$ and the edge set $\{(E_0,E_1)\in \mathcal{E}(\mathcal{X})^2: E_0 \subsetneq E_1\}$. Then $\Gamma$ is a directed acyclic graph. So one can find a maximal path in $\Gamma$ in polynomial time in $|\mathcal{E}(\mathcal{X})|$. Since $\mathcal{X}$ is feasible, we have $|\mathcal{E}(\mathcal{X})|\leq n^2$. Therefore a maximal path in $\Gamma$ can be found in time $\poly(n)$ and the lemma is proved.
\end{proof}

\subsection{Extensions}
The set of all coherent configurations on  $\Omega$ is partially ordered. Namely given coherent configurations $\mathcal{X}$ and $\mathcal{X}^{'}$ on $\Omega$ we set $\mathcal{X}\leq \mathcal{X}^{'}$ if and only if every basis relation of $\mathcal{X}$ is a union of some basis relations of $\mathcal{X}^{'}$. Clearly, the trivial and discrete coherent configurations are the minimal and maximal elements respectively. If  $\mathcal{X}\leq \mathcal{X}^{'}$ then $\aut(\mathcal{X})\geq \aut(\mathcal{X}^{'})$. If $E\in \mathcal{E}(\mathcal{X})$ and all $\mathcal{X}_{\Delta}$, $\Delta\in \Omega/E$, are pairwise isomorphic then the definition of the wreath product of coherent configurations yields that
$$\mathcal{X}\geq \mathcal{X}_{\Delta} \wr \mathcal{X}_{\Omega/E}.~\eqno(2)$$

Given a coherent  configuration $\mathcal{X}=(\Omega,S)$ and a set $T\subseteq 2^{\Omega\times \Omega}$   there exists the unique coherent configuration $\mathcal{Y}$ such that $\mathcal{Y}\geq \mathcal{X}$ and every element of $T$ is a union of some basis relations of  $\mathcal{Y}$. Moreover, $\mathcal{Y}$ can be constructed by the well-known Weisfeiler-Leman algorithm (see~\cite{Weis,WeisL}) in time polynomial in sizes of $T$ and $\Omega$. The coherent configuration $\mathcal{Y}$ is called the \emph{extension} of $\mathcal{X}$ with respect to $T$ and denoted by $\WL(\mathcal{X},T)$. 

\begin{lemm}\cite[Theorem~5.1]{NP}\label{autext}
Let $\mathcal{X}=(\Omega,S)$ be a coherent configuration, $T\subseteq 2^{\Omega\times \Omega}$, and $\mathcal{Y}=\WL(\mathcal{X},T)$. Then $\aut(\mathcal{Y})=\{f\in \aut(\mathcal{X}):t^f=t~\text{for every}~t\in T\}$.
\end{lemm}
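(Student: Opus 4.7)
The plan is to prove both inclusions in the equality $\aut(\mathcal{Y})=K$, where $K$ denotes the right-hand side $\{f\in \aut(\mathcal{X}):t^f=t \text{ for every } t\in T\}$.

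For $\aut(\mathcal{Y})\subseteq K$ I would use only the two defining properties of the extension: $\mathcal{Y}\geq \mathcal{X}$ and every $t\in T$ is a union of basis relations of $\mathcal{Y}$. Any $f\in \aut(\mathcal{Y})$ fixes each basis relation of $\mathcal{Y}$ setwise and therefore fixes every union of such relations; both $\aut(\mathcal{X})$-membership and the condition $t^f=t$ for $t\in T$ then follow directly, giving $f\in K$.

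The nontrivial inclusion $K\subseteq \aut(\mathcal{Y})$ I would prove by tracking the Weisfeiler--Leman refinement that produces $\mathcal{Y}$. Let $S_0$ be the coarsest partition of $\Omega\times\Omega$ that refines $S$ and whose blocks are such that every $t\in T$ is a union of blocks; explicitly, the classes of $S_0$ are the nonempty sets of the form $s\cap \bigcap_{t\in T_0} t \cap \bigcap_{t\in T\setminus T_0}((\Omega\times\Omega)\setminus t)$ with $s\in S$ and $T_0\subseteq T$. Iteratively define $S_{i+1}$ from $S_i$ by the one-step coherent refinement: two pairs $(\alpha,\beta),(\alpha',\beta')$ stay in the same class of $S_{i+1}$ iff they lie in the same class of $S_i$ and for every ordered pair $(r,s)$ of classes of $S_i$ the counts $|\{\gamma:(\alpha,\gamma)\in r,(\gamma,\beta)\in s\}|$ and $|\{\gamma:(\alpha',\gamma)\in r,(\gamma,\beta')\in s\}|$ coincide. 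This chain stabilizes in finitely many steps, and the stable partition is, by construction, the set of basis relations of $\mathcal{Y}$.

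The key step is then an induction on $i$ showing that every $f\in K$ preserves each class of $S_i$ setwise. For $i=0$ the permutation $f$ preserves each $s\in S$ because $f\in \aut(\mathcal{X})$, and it fixes each $t\in T$ and hence each $(\Omega\times\Omega)\setminus t$ by the definition of $K$, so $f$ preserves every Boolean combination describing a class of $S_0$. For the inductive step, assuming $f$ preserves each class of $S_i$, the substitution $\gamma\mapsto \gamma^f$ is a bijection between the two counting sets attached to $(\alpha,\beta)$ and $(\alpha^f,\beta^f)$, so the numerical condition defining $S_{i+1}$ transfers from one pair to its $f$-image, and $f$ preserves $S_{i+1}$ as well. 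Applying the induction at the stabilization step gives $f\in \aut(\mathcal{Y})$. The only delicate point I would want to handle carefully is a clean formalization of the one-step refinement so that its invariance under partition-preserving permutations is manifest; once that is in place the induction itself is routine, and the lemma is immediate.
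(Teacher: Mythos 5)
Your argument is correct in substance, but note first that the paper contains no proof of this lemma to compare against: it is imported verbatim from~\cite[Theorem~5.1]{NP}. Your easy inclusion $\aut(\mathcal{Y})\subseteq K$ is exactly right, and your main argument --- induction along the Weisfeiler--Leman refinement chain showing that every $f\in K$ fixes each class of each intermediate partition \emph{setwise} --- is the standard canonicity argument and does yield $K\subseteq\aut(\mathcal{Y})$. It is also genuinely the right route: the shorter lattice-theoretic alternative (for $f\in K$ the configuration $\mathcal{Y}^{f}$ again lies above $\mathcal{X}$ and splits every $t\in T$, so $\mathcal{Y}^{f}=\mathcal{Y}$ by minimality and rank comparison) only gives $f\in\iso(\mathcal{Y})$, i.e.\ that $f$ permutes the basis relations of $\mathcal{Y}$, and one needs extra work (fusing the orbits of the induced algebraic automorphism and invoking minimality a second time) to upgrade this to $s^{f}=s$ for all $s$; your refinement induction reaches the stronger setwise conclusion directly.

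One formalization point does need repair, adjacent to but not the same as the one you flagged. With $S_{0}$ built from $S$ and $T$ alone, the relations $t^{*}$ for $t\in T$ need not be unions of $S_{0}$-classes, and the pure counting refinement starting from such a partition need not stabilize at a partition closed under transposition; the stable partition then fails the axiom $S^{*}=S$ and so need not be a coherent configuration, let alone equal to $\WL(\mathcal{X},T)$. The fix is to include the relations $t^{*}$ in the Boolean combinations defining $S_{0}$, or equivalently to interleave a transposition-splitting step $r\mapsto r\cap (r')^{*}$ into the refinement. This costs your induction nothing: $t^{f}=t$ implies $(t^{*})^{f}=t^{*}$, and setwise preservation of classes $r$ and $r'$ gives setwise preservation of $r\cap (r')^{*}$, so both the base case and the inductive step survive verbatim. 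With that emendation, the standard observation that every coherent configuration above $\mathcal{X}$ splitting $T$ also splits each $S_{i}$ identifies your stable partition with $\WL(\mathcal{X},T)$, and the proof is complete.
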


\section{Cayley schemes}

\subsection{Definitions}

In this subsection we follow~\cite[Section~4.1]{NP} and~\cite[Section~2.4]{CP}. Let $G$ be a finite group and $e$ the identity of $G$. A coherent configuration $\mathcal{X}$ on the set $G$ is called a \emph{Cayley scheme} over $G$ if $\aut(\mathcal{X})\geq G_{right}$. In this case $\mathcal{X}$ is homogeneous because $G_{right}$ acts transitively on $G$. Clearly, if $G$ is abelian then $\mathcal{X}$ is commutative. If $G$ is cyclic then $\mathcal{X}$ is said to be \emph{circulant}. Every basis relation of $\mathcal{X}$ is an arc set of a Cayley graph over $G$. A coherent configuration is isomorphic to a Cayley scheme over $G$ if and only if its automorphism group contains a regular subgroup isomorphic to $G$. 

 One can check that $s(X)$ is an equivalence on $G$ if and only if $X$ is a subgroup of $G$. If $E\in \mathcal{E}(\mathcal{X})$ then the class of $E$ containing $e$ is denoted by $H_E$. It is easy to see that $H_E$ is a subgroup of $G$ and the classes of $E$ are the right $H_E$-cosets. Clearly,
$|\Omega/E|=|G/H_E|$ and $n_E=|H_E|$. Suppose that $F\in \mathcal{E}(\mathcal{X})$ and $F\subseteq E$. One can check that 
$$\mathcal{X}_{\Delta_1/F}\cong \mathcal{X}_{\Delta_2/F}~\eqno(3)$$ 
for every $\Delta_1,\Delta_2\in \Omega/E=G/H_E$. If $U=H_E$ and $L=H_F$  then put
$$\mathcal{X}_{U/L}=\mathcal{X}_{U/F}.$$
Observe that if $L$ is normal in~$U$ then $\mathcal{X}_{U/L}$ is a Cayley scheme over $U/L$ because $\aut(\mathcal{X}_{U/L})\geq \aut(\mathcal{X})^{U/L}\geq (G_{right})^{U/L}=(U/L)_{right}$. If $\mathcal{X}$ is the $E/F$-wreath product and $L$ is normal in $G$ then we say that $\mathcal{X}$ is also the $U/L$-wreath product. Put
$$\mathcal{H}(\mathcal{X})=\{H_E: E\in \mathcal{E}(\mathcal{X})\}.$$
A Cayley scheme $\mathcal{X}=(G,S)$ is said to be \emph{cyclotomic} if $S=\orb(KG_{right}, G^2)$ for some $K\leq \aut(G)$. In this case we write $\mathcal{X}=\cyc(K,G)$. If $\mathcal{X}$ is cyclotomic then $\mathcal{H}(\mathcal{X})$ contains all characteristic subgroups of $G$. One can check that a section of a cyclotomic Cayley scheme is also cyclotomic. We say that a Cayley scheme $\mathcal{X}$ is \emph{normal} if $G_{right}\trianglelefteq \aut(\mathcal{X})$.

\subsection{Cayley schemes over $C_{p^k}$ and $C_p\times C_{p^k}$}
  
Let $p$ be a prime and $k\geq 1$. Put $D=C\times B$, where $C=\langle c \rangle$, $|c|=p^k$, $B=\langle b \rangle$, $|b|=p$. If $l\leq k$ then put  $C_l=\{g\in C: |g|\leq p^l\}$ and $D_l=\{g\in D: |g|\leq p^l\}$. Throughout the paper  $\mathcal{K}_C$ and $\mathcal{K}_D$ denote the classes of schemes isomorphic to Cayley schemes over $C$ and $D$ respectively.

Let $\mathcal{X}$ be a Cayley scheme over $C$. Then $\mathcal{X}$ is feasible because every subgroup of $C$ is generated by one element. We say that a basis relation $s\in S$ is \emph{highest} if $\langle s \rangle=C^2$. It can be verified that all highest basic relations of $\mathcal{X}$ have the same radical (see~\cite{EP3}). The \emph{radical} $\rad(\mathcal{X})$ of $\mathcal{X}$ is defined to be the radical of a highest basis relation of $\mathcal{X}$.

Now let $\mathcal{X}$ be a Cayley scheme over $D$. In this case $\mathcal{X}$ is feasible because every subgroup of $D$ is generated by at most two elements. A basis relation $s\in S$ is said to be   \emph{highest} if  $\langle s \rangle=D^2$ or $|D/\langle s \rangle|=p$ and $\mathcal{X}_{H_{\langle s \rangle}}$ is circulant.  All highest basic relations of $\mathcal{X}$ have the same radical (see~\cite{Ry1}). The \emph{radical} $\rad(\mathcal{X})$  of $\mathcal{X}$ in this case also is defined to be the radical of a highest basis relation of $\mathcal{X}$.

Further we give a description of Cayley schemes over $C$ and $D$ in case when $p\in\{2,3\}$. If $\mathcal{X}$ is a scheme of degree~$p$, where $p\in\{2,3\}$, then $\rk(\mathcal{X})=2$ or $\mathcal{X}\cong \cyc(M,C_p)$, where $M$ is trivial (see~\cite{Han}). In both cases $\mathcal{X}\in \mathcal{K}_C$.

\begin{lemm}\label{circulant}
Let $p\in\{2,3\}$ and $\mathcal{X}$ a Cayley scheme over $C$. Then one of the following statements holds:

$(1)$ $\rk(\mathcal{X})=2;$

$(2)$ $\rad(\mathcal{X})=1_C$ and $\mathcal{X}$ is cyclotomic$;$

$(3)$ $\rad(\mathcal{X})>1_C$ and $\mathcal{X}$ is the nontrivial $U/L$-wreath product for some $U,L\in \mathcal{H}(\mathcal{X})$ such that $L\leq U$ and $\rad(\mathcal{X}_U)=1_U.$

\end{lemm}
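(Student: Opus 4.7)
The plan is to prove the trichotomy by a case analysis on $\rad(\mathcal{X})$. If $\rk(\mathcal{X})=2$, we are in case (1), so assume throughout that $\rk(\mathcal{X})\geq 3$, i.e. $\mathcal{X}$ has at least one nontrivial basis relation (and thus at least one highest one, since for a circulant scheme over $C_{p^k}$ every element of $C^\#$ lies in some basis relation whose equivalence closure is all of $C^2$ once we pass to a generator).

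For case (2), assume $\rad(\mathcal{X})=1_C$. The goal is to show $\mathcal{X}=\cyc(K,C)$ for some $K\leq \aut(C)$. The strategy is to invoke the classification of circulant schemes over $C_{p^k}$ with $p\in\{2,3\}$ from the cited work of Evdokimov--Ponomarenko: among the three possible outcomes for a circulant scheme (trivial, cyclotomic, or nontrivial generalized wreath product), the nontrivial wreath outcome forces highest basis relations to have nontrivial radical, which is ruled out by the hypothesis $\rad(\mathcal{X})=1_C$; and rank 2 was already excluded. Hence only the cyclotomic alternative remains. The main subtlety here is that the Evdokimov--Ponomarenko classification for $p\in\{2,3\}$ is where the restriction on $p$ actually gets used, and this is the principal obstacle in the lemma: for a general prime $p$ one cannot conclude cyclotomicity from $\rad(\mathcal{X})=1_C$ alone.

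For case (3), assume $F:=\rad(\mathcal{X})>1_C$, and put $L=H_F\in\mathcal{H}(\mathcal{X})$. By definition of radical, every highest basis relation $s$ satisfies $F\subseteq\rad(s)$, so $s$ is a union of $L\times L$-blocks. I would then construct $U$ as follows: let $E\in\mathcal{E}(\mathcal{X})$ be the equivalence closure of the union of all non-highest basis relations (this is an element of $\mathcal{E}(\mathcal{X})$ because $\mathcal{X}$ is feasible and equivalence closures of unions of basis relations lie in $\mathcal{E}(\mathcal{X})$), and set $U=H_E$. Because $C$ is cyclic, $E$ is a proper equivalence (else $\mathcal{X}_C$ would have no highest relation at all) and $L\leq U$ since $F\subseteq\langle s\rangle$ for all highest $s$ whereas $L$ sits inside the radical structure of the scheme.

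To verify the required properties of the pair $(U,L)$: (a) any $s\in S$ with $s\cap E=\varnothing$ is highest, hence has $F\subseteq\rad(s)$ and is therefore a union of blocks of $F$, giving the $E/F$-wreath product condition; (b) the restriction $\mathcal{X}_U$ contains (by construction of $U$) no relation that was highest in $\mathcal{X}$, so all of its highest relations arise from non-highest relations of $\mathcal{X}$, and one checks that the radical of each of these is trivial in $U$, yielding $\rad(\mathcal{X}_U)=1_U$; if this last step fails, replace $U$ by a smaller subgroup in $\mathcal{H}(\mathcal{X})$ and iterate --- the iteration must terminate because $|U|$ strictly decreases at each step and $U>L>1_C$. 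The wreath product is nontrivial because $F\neq 1_C$ and $E\neq C^2$. The main obstacle in this case is verifying $\rad(\mathcal{X}_U)=1_U$, which one ultimately reduces to the fact that $\rad$ of any highest relation of the restriction lies in $F_U=1_U$ by construction of $U$.
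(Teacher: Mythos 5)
The paper disposes of this lemma by pure citation --- \cite[Theorems~4.1 and~4.2]{EP3} together with \cite[Lemma~5.2]{Ry2} --- and your cases (1) and (2) follow essentially that route: once rank~2 is excluded and one knows a circulant scheme over a cyclic $p$-group is trivial, cyclotomic, or a nontrivial generalized wreath product, the wreath alternative is incompatible with $\rad(\mathcal{X})=1_C$ (every highest relation lies outside $E$ and so has radical containing $s(L)>1_C$), leaving cyclotomic. That part is fine. The genuine gap is in your case (3), where you replace the citation by an explicit construction: you fix $L=H_F$ with $F=\rad(\mathcal{X})$, set $U=H_E$ with $E$ the equivalence closure of the union of all non-highest basis relations, and claim that every highest relation of $\mathcal{X}_U$ has trivial radical. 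That claim is false. Take $k=3$ and let $\mathcal{X}$ be the iterated wreath product whose basis relations are $1_C$, $s(C_1^{\#})$, $s(C_2\setminus C_1)$, $s(C\setminus C_2)$, where $C_1<C_2<C$ are the subgroups of orders $p$ and $p^2$. The unique highest relation is $s(C\setminus C_2)$, so $F=\rad(\mathcal{X})=s(C_2)$ and $L=C_2$; the non-highest connection sets $C_1^{\#}$ and $C_2\setminus C_1$ generate $C_2$, so your $U$ also equals $C_2$. But the highest relation of $\mathcal{X}_U$ is the restriction of $s(C_2\setminus C_1)$ to $C_2$, whose radical is the restriction of $s(C_1)$, which is strictly larger than $1_U$.

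Your fallback --- shrink $U$ within $\mathcal{H}(\mathcal{X})$ while maintaining the invariant $U>L>1_C$ --- cannot repair this, because you fixed $L=H_{\rad(\mathcal{X})}$ once and for all: in the example $U=L=C_2$ is the only proper subgroup of $C$ containing your $L$, so there is no admissible smaller $U$ at all. The lemma is nevertheless true there, but only with a \emph{different} second subgroup: $(U,L)=(C_1,C_1)$ works (note also that statement (3) allows $L=U$, which already contradicts your invariant $U>L$). So the missing idea is that the descent must decrease $L$ together with $U$: when $\rad(\mathcal{X}_U)\neq 1_U$ one has to pass to the new pair with $L'=H_{\rad(\mathcal{X}_U)}$ and a correspondingly smaller $U'$, and then re-verify the wreath condition for the basis relations newly outside the smaller $E$, comparing the new radical with the old $L$ (possible here because the subgroups of $C$ form a chain). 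This bookkeeping is exactly the nontrivial content that the paper outsources to \cite[Theorems~4.1 and~4.2]{EP3} and \cite[Lemma~5.2]{Ry2}; as written, your induction has no valid step to take in the example above, so case (3) of your proposal does not go through. (A smaller quibble: your one-line justification of $L\leq U$ is muddled --- the correct reason is that $F$ is a proper equivalence, hence a union of non-highest basis relations, whose connection sets therefore lie in $U$.)
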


\begin{proof}
Follows from~\cite[Theorem~4.1, Theorem~4.2]{EP3} and~\cite[Lemma~5.2]{Ry2}.
\end{proof}

The description of all Cayley schemes over $D$ was obtained, in fact, in~\cite{MP} for $p=2$ and in~\cite{Ry1} for $p=3$. The following lemma is taken from~\cite{Ry2}, where it was formulated in the language of $S$-rings.

\begin{lemm}\cite[Lemma~6.2]{Ry2}\label{element}
Let $p\in\{2,3\}$, $k=1$, and $\mathcal{X}$ a Cayley scheme over $D$. Then one of the following statements holds:

$(1)$ $\rk(\mathcal{X})=2;$

$(2)$ $\mathcal{X}$ is the tensor product of two  Cayley schemes over cyclic groups of order $p;$

$(3)$ $\mathcal{X}$ is the wreath product of two  Cayley schemes over cyclic groups of order $p;$

$(4)$ $p=3$ and $\mathcal{X}\cong\cyc(M,D)$, where $M=\langle \sigma \rangle$ and $\sigma:(c,b)\rightarrow (c^{-1},b^{-1});$

$(5)$  $p=3$ and $\mathcal{X}\cong \cyc(M,D)$, where $M=\langle \sigma \rangle$ and $\sigma:(c,b)\rightarrow(b,c^{-1}).$
\end{lemm}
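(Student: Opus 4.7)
My plan is a case analysis based on the lattice $\mathcal{E}(\mathcal{X})$, supplemented by the known classification of Schur rings over elementary abelian $p$-groups of rank~$2$ for small primes.

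First, I would set up the combinatorial geometry: since $k=1$, the group $D$ is elementary abelian of order $p^2$, whose subgroup lattice consists of $\{e\}$, the $p+1$ subgroups of order~$p$, and $D$ itself. In particular, every proper nontrivial subgroup of $D$ is cyclic of order $p$, so every proper section of $\mathcal{X}$ has degree $1$ or $p$. By Han's theorem on schemes of prime degree $p\in\{2,3\}$ (cited just before Lemma~\ref{circulant}), such a section is either trivial of rank~$2$ or discrete of rank~$p$. If $\rk(\mathcal{X})=2$, case~(1) holds; from now on I assume $\rk(\mathcal{X})\geq 3$.

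Next, I would dispose of the decomposable cases. Since the subgroup lattice of $D$ contains only chains of length~$2$, any nontrivial generalized wreath decomposition of $\mathcal{X}$ collapses to a standard wreath product $\mathcal{X}_U \wr \mathcal{X}_{D/U}$ for some $U\in \mathcal{H}(\mathcal{X})$ of order $p$ (because $F \subsetneq E$ with both proper nontrivial would force two distinct subgroups ordered by inclusion, impossible in a lattice of length~$2$), and both factors are Cayley schemes over $C_p$; this yields case~(3). If instead $\mathcal{X}$ decomposes as a tensor product $\mathcal{X}_{U_1} \otimes \mathcal{X}_{U_2}$ for a direct-sum decomposition $D = U_1 \oplus U_2$, case~(2) holds. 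Both structural decompositions can be recognized by inspecting how the basis relations of $\mathcal{X}$ sit inside the $U$-coset structure and, by equation~(3), by checking algebraic isomorphism between restrictions to cosets.

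Finally, assume $\mathcal{X}$ is neither a tensor nor a wreath product, still with $\rk(\mathcal{X})\geq 3$. For $p=2$, I would perform a direct enumeration (e.g., through the subgroups of $\mathrm{Aut}(D)\cong S_3$) and verify by hand that every Schur ring over $C_2\times C_2$ is already covered by cases~(1)--(3); no further case arises. For $p=3$, I would invoke the classification of Cayley schemes over $C_3\times C_3$ from~\cite{MP} and~\cite{Ry1}: the only remaining schemes are cyclotomic, $\mathcal{X}\cong \cyc(M,D)$, and after enumerating conjugacy classes of $M\leq\mathrm{Aut}(D)\cong GL_2(\mathbb{F}_3)$ and computing the corresponding orbit partitions of $D$, only two choices of $M$ (the order-$2$ subgroup $\langle -I \rangle$ and a cyclic group of order~$4$ with square $-I$) produce schemes not decomposable via case~(2) or~(3); these are exactly cases~(4) and~(5). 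Indecomposability is confirmed by noting that case~(5) is primitive of rank~$3$ (ruling out both tensor and wreath structures on $p^2$ points) and that case~(4), of rank~$5$, has basis relations explicitly different from those of $\mathcal{D}_{C_3}\wr \mathcal{D}_{C_3}$, the only other rank-$5$ candidate.

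The hard part will be the classification step for $p=3$: one must enumerate subgroups of $GL_2(\mathbb{F}_3)$ up to conjugacy, compute for each the orbit partition of $D$, and structurally verify whether the resulting cyclotomic scheme is realizable as a tensor or wreath product. More subtly still, one must justify that every Schur ring over $C_3\times C_3$ is cyclotomic---this is the deepest input, coming from the Muzychuk--Ponomarenko / Ryabov analysis, and rests on specific structural properties of Schur rings over abelian $p$-groups of rank~$2$.
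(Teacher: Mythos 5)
Your proposal is essentially correct, but the comparison here is asymmetric: the paper gives \emph{no} proof of this lemma at all --- it is imported verbatim as \cite[Lemma~6.2]{Ry2} (there stated in the language of $S$-rings, and ultimately resting on the classifications in \cite{MP} for $p=2$ and \cite{Ry1} for $p=3$). What you have done is reconstruct a plausible proof outline whose easy parts (the length-two subgroup lattice of $C_p\times C_p$ forcing any nontrivial generalized wreath decomposition to collapse to a standard wreath product $\mathcal{X}_U\wr\mathcal{X}_{D/U}$ with $|U|=p$; Han's classification of schemes of degree $p\in\{2,3\}$ on the sections; the hands-on enumeration over $C_2\times C_2$; the orbit computation for $M\leq\GL_2(\mF_3)$ isolating $\langle -I\rangle$ and the cyclic group of order~$4$ with square $-I$, distinguishable from the tensor and wreath candidates by valencies) are all sound, while the one genuinely hard step --- that every $S$-ring over $C_3\times C_3$ which is not of rank~$2$ and not a tensor or wreath product is cyclotomic --- is deferred to exactly the same literature the paper's citation rests on. So your route buys an explicit, checkable derivation of cases (4) and (5) from the cyclotomic reduction, which the paper simply quotes, at the cost of still importing the deepest input.

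Two small inaccuracies to fix. First, for $p=2$ your parenthetical suggestion of enumerating ``through the subgroups of $\aut(D)\cong S_3$'' only produces the \emph{cyclotomic} schemes; what is needed (and what your phrase ``direct enumeration'' should mean) is the enumeration of all $S$-ring partitions of $D^{\#}$, which for $|D^{\#}|=3$ is trivial and yields exactly rank~$2$, the three wreath products, and the tensor product --- a priori one must rule out non-cyclotomic $S$-rings, not just list cyclotomic ones. Second, for $p=3$ only \cite{Ry1} is relevant to the cyclotomic reduction (\cite{MP} treats $2$-groups), and the reduction should be quoted in its precise form: every nontrivial $S$-ring over $C_3\times C_3$ with trivial radical that is not a tensor product is normal, hence cyclotomic; Schurity alone, which is what one might naively extract from those papers, does not by itself imply cyclotomicity, so make sure you cite the structural (normality) statement rather than the Schurity theorem.
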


If Statement~5 of Lemma~\ref{element} holds for a Cayley scheme $\mathcal{X}$ over $C_3\times C_3$ then $\mathcal{X}$ is called the \emph{Paley} scheme.

\begin{lemm}\label{sep}
Suppose that $\mathcal{X}$ is a scheme of degree~$p$, where $p\in\{2,3\}$, or $\mathcal{X}$ is the Paley scheme. Then the following hold:

$(1)$ $\mathcal{X}$ is  primitive, normal and separable;

$(2)$ $\aut(\mathcal{X})$ is solvable.
\end{lemm}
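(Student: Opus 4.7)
My plan is to handle the two cases separately. For the first case, suppose $\mathcal X$ is a scheme on $\Omega$ with $|\Omega|=p\in\{2,3\}$. Since a scheme is homogeneous, every class of any equivalence in $\mathcal{E}(\mathcal{X})$ has the same cardinality, which must divide~$p$; thus only $1_\Omega$ and $\Omega\times\Omega$ can occur and $\mathcal{X}$ is primitive. By Hanaki's result quoted just before Lemma~\ref{circulant}, either $\mathcal{X}\cong\mathcal{T}_\Omega$ or $\mathcal{X}\cong\cyc(\{1\},C_p)$. In the first case $\aut(\mathcal X)=\sym(\Omega)$, which is $S_2$ or $S_3$ (both solvable), and the regular subgroup $(C_p)_{right}$ is normal there (trivially for $p=2$; as the unique Sylow $3$-subgroup of $S_3$ when $p=3$); separability of $\mathcal{T}_\Omega$ is noted in Section~2.2. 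In the second case the scheme is thin ($n_s=1$ for every $s\in S$), so $\aut(\mathcal X)=(C_p)_{right}$ is already regular, abelian, normal in itself, and solvable; separability is immediate because an algebraic isomorphism to any $\mathcal{X}'$ forces $\mathcal{X}'$ to be thin of the same degree, and one reads off a combinatorial isomorphism by sending the relation indexed by $c$ to its image relation.

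For the second case, let $\mathcal{X}=\cyc(M,D)$ with $D=C_3\times C_3$ and $M=\langle\sigma\rangle$, $\sigma\colon(c,b)\mapsto(b,c^{-1})$. My first step is to compute $|M|$ and the $M$-orbits on $D^{\#}$: a direct calculation shows $|\sigma|=4$, and $\sigma$ acts without fixed points on $D^{\#}$, so $M$ is semiregular on $D^{\#}$ with two orbits of size~$4$. This gives $\rk(\mathcal X)=3$. Next, I show primitivity by checking directly that each of the two nondiagonal basis relations already generates $D\times D$ as an equivalence (the differences in a single $M$-orbit generate $D$), so $\mathcal{E}(\mathcal X)=\{1_D,D\times D\}$. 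For normality and solvability, I identify $D$ with the additive group of $\mathbb F_9$ via an isomorphism under which $\sigma$ becomes multiplication by a primitive fourth root of unity in $\mathbb F_9^{*}$; then $\aut(\mathcal X)$ is the setwise stabiliser in $\AGaL(1,9)$ of the set of nonzero fourth powers in $\mathbb F_9$, which equals $D_{right}\rtimes(M\rtimes\langle\tau\rangle)$ where $\tau$ is the Frobenius. This exhibits $D_{right}$ as a normal regular subgroup of order~$9$, so the scheme is normal; the quotient has order~$8$, and $\aut(\mathcal X)$ is solvable.

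Separability of the Paley scheme on $9$ points is where I expect the only real work. The cleanest route, which I would take, is to observe that $\aut(\mathcal X)$ is $2$-transitive on none of the cells but is of rank~$3$, and then either (i) invoke the classical fact that a strongly regular graph with the parameters of the Paley graph on $9$ vertices is determined up to isomorphism by its parameters (hence by the intersection numbers of the associated rank-$3$ scheme), making every algebraic isomorphism combinatorial; or (ii) appeal to the known separability of cyclotomic schemes over $\mathbb F_q$ of small degree. Either route suffices.

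The main obstacle, as noted, is the separability assertion for the Paley scheme: primitivity, normality and solvability all follow from quick explicit computations once the structure of $M$ and $\aut(\mathcal X)$ is written down, but separability requires either the uniqueness argument for the $9$-vertex Paley graph or a reference to a general theorem on cyclotomic schemes.
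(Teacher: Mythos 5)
Your degree-$p$ case is correct and essentially identical to the paper's: the paper also invokes Hanaki's classification to get $\rk(\mathcal{X})=2$ or $\mathcal{X}\cong\cyc(M,C_p)$ with $M$ trivial, gets normality from $(C_p)_{right}$ having index at most~$2$ in $\aut(\mathcal{X})$, solvability from $\aut(\mathcal{X})\leq\sym(p)$, and separability from \cite[Theorem~3.3]{EP2} in the thin case (your direct argument for thin schemes of prime degree is a fine substitute). For the Paley case your route genuinely differs: the paper outsources the two key facts to COCO2P computer calculations, namely (a) uniqueness of the primitive scheme of degree~$9$ and rank~$3$, used together with \cite[Theorem~1]{Ry2} for separability, and (b) $\aut(\mathcal{X})=D_{right}\rtimes M'$ with $M'\cong C_4\rtimes C_2$, used for normality and solvability. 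Your replacement of (a) by the classical parameter-uniqueness of the $(9,4,1,2)$ strongly regular graph is sound: an algebraic isomorphism preserves rank, symmetry and intersection numbers, so its target is again the rank-$3$ scheme of an SRG with these parameters, and uniqueness (Shrikhande's theorem for $L_2(3)$, or a direct check) supplies an isomorphism inducing it. This is a more self-contained argument than the paper's.

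The genuine gap is in your replacement of (b). You assert that $\aut(\mathcal{X})$ \emph{is} the setwise stabiliser in $\AGaL(1,9)$ of the relevant connection set, but this presupposes the containment $\aut(\mathcal{X})\leq\AGaL(1,9)$, which is precisely the nontrivial upper bound that the paper extracts from the computer calculation. Your cyclotomic construction only yields the lower bound $\aut(\mathcal{X})\geq D_{right}\rtimes(M\rtimes\langle\tau\rangle)$, and a lower bound gives neither $D_{right}\trianglelefteq\aut(\mathcal{X})$ nor solvability; both statements are about the \emph{full} automorphism group. The gap is fixable without a computer: identify the Paley graph on $9$ vertices with the rook's graph $K_3\square K_3$, whose automorphism group is $\sym(3)\wr C_2$ of order $72$; since your exhibited subgroup also has order $9\cdot 8=72$, equality follows, giving normality and solvability at once. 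Separately, note a terminological slip: the connection set is the set of nonzero \emph{squares} of $\mathbb{F}_9$ (equivalently the fourth roots of unity, the $M$-orbit of~$1$, a subgroup of order~$4$ of $\mathbb{F}_9^{*}$), not the set of nonzero fourth powers, which form the subgroup $\{\pm 1\}$ of order~$2$.
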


\begin{proof}
Let $\mathcal{X}$ be a scheme of degree~$p$, where $p\in\{2,3\}$. Then $\rk(\mathcal{X})=2$ or $\mathcal{X}\cong \cyc(M,C_p)$, where $M$ is trivial. In both cases $\mathcal{X}$ is  primitive. Also in both cases $(C_p)_{right}$ has index at most~2 in $\aut(\mathcal{X})$ and hence $\mathcal{X}$ is normal. In the former case $\mathcal{X}$ is obviously separable. In the latter case every basis relation of $\mathcal{X}$ has valency~1 and $\mathcal{X}$ is separable by~\cite[Theorem~3.3]{EP2}. Statement~2 of the lemma holds for $\mathcal{X}$ of degree~$p$ because $\aut(\mathcal{X})\leq \sym (p)$ and $\sym(p)$ is solvable for $p\in\{2,3\}$.

Suppose that $\mathcal{X}$ is the Paley scheme. Then $\mathcal{X}$ has degree~9 and rank~3. The straightforward check shows that $\mathcal{X}$ is primitive. Computer calculations made by using the GAP package COCO2P~\cite{GAP} show  that: (a) $\mathcal{X}$ is the unique up to an isomorphism primitive scheme of degree~9 and rank~3; (b) $\aut(\mathcal{X})=D_{right}\rtimes M^{'}$, where $D\cong C_3\times C_3$ and $M^{'}\cong C_4 \rtimes C_2$. Due to~(a) and~\cite[Theorem~1]{Ry2}, $\mathcal{X}$ is separable; due to~(b), $\mathcal{X}$ is normal and $\aut(\mathcal{X})$ is solvable. Thus, the lemma is proved.
\end{proof}

\begin{lemm}\label{Cayleyscheme}
Let $p\in\{2,3\}$, $k\geq 2$, and $\mathcal{X}$ a Cayley scheme over $D$. Then one of the following statements holds:

$(1)$ $\rk(\mathcal{X})=2;$

$(2)$ $\rad(\mathcal{X})=1_D$ and $\mathcal{X}=\mathcal{X}_V\otimes \mathcal{X}_S$ for some $V,S\in \mathcal{H}(\mathcal{X})$ such that $V\cong C_{p^k}$, $S\cong C_p$, $D=V\times S$, and $\rk(\mathcal{X}_V)=2;$

$(3)$  $\rad(\mathcal{X})=1_D$ and $\mathcal{X}$ is cyclotomic$;$

$(4)$ $\rad(\mathcal{X})>1_D$ and $\mathcal{X}$ is the nontrivial $U/L$-wreath product for some $U,L\in \mathcal{H}(\mathcal{X})$ such that $L\leq U$ and $\rad(\mathcal{X}_U)=1_U.$
\end{lemm}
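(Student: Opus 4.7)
The plan is to translate the classifications of $S$-rings over $D$ from \cite{MP} (for $p=2$) and \cite{Ry1} (for $p=3$) into the language of Cayley schemes, using the standard bijection between $S$-rings over an abelian group and Cayley schemes over it (see \cite[Section~2.4]{CP}). The structure of the argument mirrors Lemma~\ref{circulant}: first dispose of rank~$2$, then split on whether $\rad(\mathcal{X})$ is trivial. Note that the base case $k=1$ is already handled by Lemma~\ref{element}, so the work here is genuinely about $k\geq 2$.

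First, if $\rk(\mathcal{X})=2$ we are in case~(1), so assume $\rk(\mathcal{X})\geq 3$. Suppose $\rad(\mathcal{X})>1_D$. I would let $L=H_{\rad(\mathcal{X})}$, which is a subgroup since $\rad(\mathcal{X})\in\mathcal{E}(\mathcal{X})$, and pick $U\in\mathcal{H}(\mathcal{X})$ minimal with $\rad(\mathcal{X}_U)=1_U$ (such a $U$ exists because at the top $\rad(\mathcal{X}_D)=\rad(\mathcal{X})$ is nontrivial but shrinking to smaller subgroups must eventually trivialize the radical). Using the fact, established in \cite{Ry1}, that all highest basis relations share the common radical $\rad(\mathcal{X})$, together with the definition of ``highest'' for Cayley schemes over $D$, one checks that every $s\in S$ with $s\cap E=\varnothing$ (where $E$ is the equivalence whose class at the identity is $U$) is a union of $L$-coset boxes $\Delta\times\Lambda$. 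This is precisely the $E/F$-wreath product condition with $H_E=U$ and $H_F=L$, giving case~(4). The argument runs parallel to the cyclic setting treated in Lemma~\ref{circulant}(3).

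Third, the main content lies in the case $\rad(\mathcal{X})=1_D$. Here I would invoke the classifications in \cite{MP,Ry1}, which yield the dichotomy: either $\mathcal{X}$ admits a tensor product decomposition $\mathcal{X}=\mathcal{X}_V\otimes\mathcal{X}_S$ with $D=V\times S$, $V\cong C_{p^k}$, $S\cong C_p$, or $\mathcal{X}$ is cyclotomic. In the tensor product alternative I would then argue that $\rk(\mathcal{X}_V)=2$: otherwise Lemma~\ref{circulant} applied to $\mathcal{X}_V$ would either yield $\rad(\mathcal{X}_V)>1_V$, whence $\rad(\mathcal{X})>1_D$ by the tensor decomposition, contradicting our hypothesis; or it would force $\mathcal{X}_V$ to be cyclotomic, in which case the entire tensor product is cyclotomic and $\mathcal{X}$ belongs to case~(3) rather than~(2). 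This yields cases~(2) and~(3).

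The main obstacle is the trivial-radical classification itself, which I would not reprove: it is precisely the content of \cite{MP,Ry1} and rests on a detailed $S$-ring analysis over $C_p\times C_{p^k}$ via the Schur--Wielandt principle for $p\in\{2,3\}$. The outline above reduces the lemma to that classification and handles the bookkeeping for the nontrivial-radical and tensor-versus-cyclotomic splits.
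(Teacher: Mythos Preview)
The paper's proof is a single citation: ``Follows from \cite[Lemma~6.3]{Ry2}. See also \cite{MP,Ry1}.'' So your outline is not so much a different route as an attempt to reconstruct what that cited lemma does; in spirit the two agree, since both ultimately defer the trivial-radical classification to \cite{MP,Ry1} and translate via the $S$-ring/Cayley-scheme dictionary.

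That said, your handling of case~(4) has a slip. You set $L=H_{\rad(\mathcal{X})}$ and then ``pick $U\in\mathcal{H}(\mathcal{X})$ minimal with $\rad(\mathcal{X}_U)=1_U$.'' Taken literally, the minimal such $U$ is the trivial subgroup, since $\rad(\mathcal{X}_{\{e\}})=1_{\{e\}}$ always; and for small $U$ there is no reason to have $L\leq U$, which the lemma requires. What you presumably want is a \emph{maximal} $U$ with $\rad(\mathcal{X}_U)=1_U$ (or, more precisely, the $U$ produced by the structure theory in \cite{Ry2}), and even then the verification that every basis relation outside $E$ has $F$ in its radical is not immediate from the definition of ``highest'' alone: the definition of highest for schemes over $D$ involves two alternatives (either $\langle s\rangle=D^2$, or $|D/\langle s\rangle|=p$ with $\mathcal{X}_{H_{\langle s\rangle}}$ circulant), and one must check that non-highest relations of large support still satisfy the wreath condition. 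This is exactly the bookkeeping that \cite[Lemma~6.3]{Ry2} packages, so citing it (as the paper does) is the clean way out.

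Your argument in the tensor case that $\rk(\mathcal{X}_V)=2$ is fine: if $\mathcal{X}_V$ were cyclotomic then $\mathcal{X}=\mathcal{X}_V\otimes\mathcal{X}_S$ is cyclotomic (a tensor product of cyclotomic schemes over an abelian direct product is $\cyc(K_1\times K_2,V\times S)$), pushing you into case~(3); and $\rad(\mathcal{X}_V)>1_V$ would force $\rad(\mathcal{X})>1_D$. So that part of the sketch is sound.
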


\begin{proof}
Follows from~\cite[Lemma~6.3]{Ry2}. See also~\cite{MP,Ry1}.
\end{proof}

\begin{lemm}\label{primsection}
Let $p\in\{2,3\}$, $\mathcal{X}$ a Cayley scheme over $D$, and $\mathcal{X}^{'}\in \mathcal{Q}(\mathcal{X})_{prim}$. Then one of the following statements holds:

$(1)$ $\rk(\mathcal{X}^{'})=2;$

$(2)$ $\mathcal{X}^{'}$ has degree~$p;$

$(3)$ $\mathcal{X}^{'}$ is the Paley scheme.

\end{lemm}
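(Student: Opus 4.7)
The plan is to reduce $\mathcal{X}'$ to a Cayley scheme over a small quotient group, and then to invoke Lemmas~\ref{circulant}, \ref{element}, and \ref{Cayleyscheme} case by case and rule out every option incompatible with primitivity.

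First I would write $\mathcal{X}' = \mathcal{X}_{\Delta/F}$ for some equivalences $F \subseteq E$ in $\mathcal{E}(\mathcal{X})$ and some $\Delta \in \Omega/E$. Since $\mathcal{X}$ is a Cayley scheme over the abelian group $D$, the subgroups $U = H_E$ and $L = H_F$ of $D$ satisfy $L \trianglelefteq U$, and by Subsection~3.1 the section $\mathcal{X}'$ is isomorphic to a Cayley scheme over $Q := U/L$. Because $D \cong C_p \times C_{p^k}$ has rank~$2$, so does every subgroup of $D$, and hence every quotient of such a subgroup. Thus $Q$ is trivial, cyclic of order $p^m$ for some $m \geq 1$, or isomorphic to $C_p \times C_{p^m}$ for some $m \geq 1$.

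Next I would handle these possibilities in turn. The trivial case is degenerate, and $Q \cong C_p$ gives case~(2) directly. For $Q \cong C_{p^m}$ with $m \geq 2$, Lemma~\ref{circulant} leaves only rank~$2$: a nontrivial wreath product is imprimitive, while a cyclotomic Cayley scheme over $C_{p^m}$ has $\mathcal{H}$ containing the unique subgroup of order~$p$, a proper nontrivial characteristic subgroup, which obstructs primitivity. For $Q \cong C_p \times C_{p^m}$ with $m \geq 2$ the argument is analogous via Lemma~\ref{Cayleyscheme}: tensor and wreath products are manifestly imprimitive, and a cyclotomic Cayley scheme over $Q$ has in its $\mathcal{H}$ the characteristic subgroup $\{g \in Q : |g| \leq p^{m-1}\}$, so it is imprimitive as well. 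The delicate case is $Q \cong C_p \times C_p$, where $Q$ has no proper nontrivial characteristic subgroup and the preceding heuristic fails. Here I would run through Lemma~\ref{element}: tensor and wreath products are imprimitive, and case~(4) is a cyclotomic scheme with respect to the inversion automorphism, which fixes every subgroup of $Q$, so $\mathcal{H}$ contains all four subgroups of order~$p$ and the scheme is imprimitive. What remains are rank~$2$ (case~(1)) and the Paley scheme of case~(5), whose primitivity is recorded in Lemma~\ref{sep}, giving case~(3).

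The main obstacle is precisely the $Q \cong C_p \times C_p$ step: the characteristic-subgroup argument above breaks down, and one must use the explicit classification in Lemma~\ref{element} and inspect each option directly, in particular observing that the inversion-cyclotomic scheme has a rich $\mathcal{H}$ while the Paley scheme does not. Apart from this, the argument is a straightforward walk through the three classification lemmas using only that wreath and tensor products are imprimitive and that cyclotomic Cayley schemes over abelian $p$-groups detect characteristic subgroups through~$\mathcal{H}$.
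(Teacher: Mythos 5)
Your proof is correct and follows essentially the same route as the paper: reduce via~(3) to a Cayley scheme over a section $U/L$ of $D$, then case through Lemmas~\ref{circulant}, \ref{element}, and \ref{Cayleyscheme}, discarding the tensor, wreath, and cyclotomic possibilities as imprimitive. Your only elaboration is spelling out why case~(4) of Lemma~\ref{element} is imprimitive (inversion fixes every subgroup, so $\mathcal{H}$ is rich), a step the paper dismisses as obvious.
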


\begin{proof}
In view of~(3), we may assume that $\mathcal{X}^{'}$ is a Cayley scheme over some section $U/L$ of $D$. If $|U/L|=p$ then Statement~2 of the lemma holds. Let $|U/L|\geq p^2$. Suppose that $U/L\cong C_{p^l}$ for some $l$. Then Lemma~\ref{circulant} holds for $\mathcal{X}^{'}$. If  $\mathcal{X}^{'}$ is cyclotomic then $\mathcal{H}(\mathcal{X}^{'})$ contains all characteristic subgroups of $U/L$, i.e. all subgroups of $U/L$, and hence $\mathcal{X}^{'}$ is imprimitive. If $\mathcal{X}^{'}$ is the generalized wreath product of two Cayley schemes then obviously $\mathcal{X}^{'}$ is imprimitive. Therefore $\rk(\mathcal{X}^{'})=2$ and Statement~1 of the lemma holds. 

Now suppose that $U/L\cong C_p\times C_{p^l}$ for some $l\geq 1$. If $|U/L|=p^2$ then Lemma~\ref{element} holds for $\mathcal{X}^{'}$. If one of the Statements~2-4 holds for $\mathcal{X}^{'}$ then obviously  $\mathcal{X}^{'}$ is imprimitive. Therefore $\rk(\mathcal{X}^{'})=2$ or $\mathcal{X}^{'}$ is the Paley scheme. So Statement~1 or Statement~3 of the lemma holds. 

If $|U/L|\geq p^3$ then Lemma~\ref{Cayleyscheme} holds for $\mathcal{X}^{'}$. If $\mathcal{X}^{'}$ is the tensor product or the generalized wreath product of two Cayley schemes then obviously $\mathcal{X}^{'}$ is imprimitive. If  $\mathcal{X}^{'}$ is cyclotomic then $\mathcal{H}(\mathcal{X}^{'})$  contains all characteristic subgroups of $U/L$, for example the proper subgroup of $U/L$ isomorphic to $C_p\times C_p$, and hence $\mathcal{X}^{'}$ is imprimitive. Therefore $\rk(\mathcal{X}^{'})=2$ and Statement~1 of the lemma holds. The lemma is proved.
\end{proof}

We finish this section with the following lemma which provides a special property of the automorphism group of a Cayley scheme over $D$ having a primitive  section of rank~2 and degree at least~$p^2$.  

\begin{lemm}\label{autcayley}
Let $p\in\{2,3\}$, $\mathcal{X}$ a Cayley scheme over $D$, $\mathcal{F}_{\min}(\mathcal{X})\neq \varnothing$, $(F,E)\in \mathcal{F}_{\min}(\mathcal{X})$, $U=H_E$, and $L=H_F$. Then $\aut(\mathcal{X})^{D/L}\geq \prod \limits_{\Delta\in D/U} \sym(\Delta/L)$.
\end{lemm}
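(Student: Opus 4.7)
The plan is to first extract a standard wreath-product structure for $\mathcal{X}$ from the minimality hypothesis, and then use it to lift enough automorphisms to $D/L$. The starting observation is that each section $\mathcal{X}_{\Delta/F}$ is of rank~2 and hence primitive, so by Lemma~\ref{primitive} there is no $R\in\mathcal{E}(\mathcal{X})$ with $F\subsetneq R\subsetneq E$. Together with the minimality of $|E|$ in $\mathcal{F}(\mathcal{X})$, this places sharp constraints on how the basic relations of $\mathcal{X}$ cross $F$ and $E$.

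The central structural step is to prove that $\mathcal{X}=\mathcal{X}_L\wr\mathcal{T}_{U/L}\wr\mathcal{X}_{D/U}$ as an iterated standard wreath product, where $\mathcal{T}_{U/L}$ denotes the trivial rank-2 scheme on $U/L$. I would argue this by applying the classifications of Lemma~\ref{circulant} for $U\cong C_{p^l}$ and of Lemmas~\ref{element} and \ref{Cayleyscheme} for $U\cong C_p\times C_{p^l}$, and ruling out in each branch the cyclotomic, Paley, and tensor-product alternatives: each of these would either create an intermediate $R\in\mathcal{E}(\mathcal{X})$ with $F\subsetneq R\subsetneq E$ (impossible by the previous paragraph) or produce a smaller pair $(F',E')\in\mathcal{F}(\mathcal{X})$ with $|E'|<|E|$, contradicting minimality. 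A parallel analysis applied to basic relations $s$ with $s\cap E=\varnothing$ shows $E\subseteq\rad(s)$, giving the outer standard $E$-wreath factor.

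Granting $\mathcal{X}=\mathcal{X}_L\wr(\mathcal{T}_{U/L}\wr\mathcal{X}_{D/U})$, the formula $\aut(\mathcal{Y}_1\wr\mathcal{Y}_2)=\aut(\mathcal{Y}_1)\wr\aut(\mathcal{Y}_2)$ for standard wreath products, together with the fact that the $\aut(\mathcal{X}_L)$-factors act trivially on $D/L$, yields
\[
\aut(\mathcal{X})^{D/L}=\aut(\mathcal{T}_{U/L}\wr\mathcal{X}_{D/U})=\sym(U/L)\wr\aut(\mathcal{X}_{D/U}),
\]
whose base group is precisely $\prod_{\Delta\in D/U}\sym(\Delta/L)$, and the required containment follows. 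Concretely, any element $(\tau_\Delta)_\Delta$ of this base group lifts to an automorphism of $\mathcal{X}$ by permuting the $L$-cosets of each $\Delta\in D/U$ according to $\tau_\Delta$ via right translations $g\mapsto g u_1^{-1}u_2$ that send $u_1L$ to $\tau_\Delta(u_1L)=u_2L$. The main obstacle is the structural step: the case analysis through the classification lemmas is delicate, and at each turn the minimality of $|E|$ must be translated into a combinatorial exclusion of the alternative structural types produced by those lemmas.
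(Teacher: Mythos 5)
Your central structural claim --- that minimality forces $\mathcal{X}=\mathcal{X}_L\wr\mathcal{T}_{U/L}\wr\mathcal{X}_{D/U}$ as an iterated \emph{standard} wreath product --- is false, and this is a genuine gap rather than a detail to be checked. Minimality of $|E|$ does not rule out the tensor alternative for $\mathcal{X}$ itself: Statement~2 of Lemma~\ref{Cayleyscheme}, i.e. $\mathcal{X}=\mathcal{X}_V\otimes\mathcal{X}_S$ with $V\cong C_{p^k}$, $\rk(\mathcal{X}_V)=2$, $S\cong C_p$, is fully compatible with the hypotheses, the minimal pair then being $U=V$, $L=e$ (what minimality kills is only the competing pair $U=D$, $L=S$; the paper's proof accordingly \emph{treats} this tensor case separately instead of excluding it). For such $\mathcal{X}$ the basis relation $1_V\otimes s$ with $s\neq 1_S$ lies outside $E$ and has trivial radical, so your assertion that $E\subseteq\rad(s)$ for every basis relation $s$ with $s\cap E=\varnothing$ fails, $\mathcal{X}$ is not an $E$-wreath product at all, and a fortiori not $\mathcal{X}_L\wr\mathcal{T}_{U/L}\wr\mathcal{X}_{D/U}$, all of whose relations outside $E$ contain $E$ in their radical. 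Your decomposition is correct only in one subcase of the paper's analysis, namely when $\rad(\mathcal{X})>1_D$ and $D_t=U$, where $D_t=\{g\in D:|g|\leq p^t\}$ and $p^t=\max_{g\in U}|g|$; there the paper's auxiliary Lemma~\ref{minsection} indeed yields $\rk(\mathcal{X}_U)=2$ or $\mathcal{X}_U=\mathcal{X}_L\wr\mathcal{X}_{U/L}$ together with $\mathcal{X}=\mathcal{X}_U\wr\mathcal{X}_{D/U}$, and your computation goes through.

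The remaining case $|D_t:U|=p$ is where the paper's real work lies and your argument has no counterpart. There the paper proves only that $E\subseteq\rad(s)$ for basis relations $s$ \emph{outside} $E_1$ (the equivalence with $H_{E_1}=D_t$), so $\mathcal{X}$ is merely the $D_t/U$-\emph{generalized} wreath product; inside $D_t$ one has $\mathcal{X}_{D_t}=\mathcal{X}_U\otimes\mathcal{X}_S$ or $\mathcal{X}_{D_t}=\mathcal{X}_L\wr(\mathcal{X}_{U/L}\otimes\mathcal{X}_{S/L})$, a degree-$p$ tensor factor crossing the $E$-classes, so the formula $\aut(\mathcal{Y}_1\wr\mathcal{Y}_2)=\aut(\mathcal{Y}_1)\wr\aut(\mathcal{Y}_2)$ is unavailable. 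The paper instead identifies $\aut(\mathcal{X})$ with a generalized wreath product $K_1\wr_{D_t/U}K_0$ of permutation groups in the sense of~\cite{EP3}, verifying the compatibility conditions (11)--(12) of that paper via the equality $K_0^{D_t/U}=K_1^{D_t/U}$, which rests on $|D_t:U|=p\leq 3$ --- a use of $p\in\{2,3\}$ that your proof never makes at this stage, which is itself a warning sign. Finally, your concrete lifting step is wrong even where the structure does hold: right translations $g\mapsto gu_1^{-1}u_2$ act simultaneously on \emph{all} classes of $D/U$ and induce only the $|U/L|$ translations of each block $\Delta/L$, so they realize neither an arbitrary $\tau_\Delta$ nor independent choices across blocks; the full group $\prod_{\Delta\in D/U}\sym(\Delta/L)$ must be extracted from the (generalized) wreath structure of $\aut(\mathcal{X})$, which is precisely the content your proposal skips.
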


Before we prove Lemma~\ref{autcayley}, we formulate and prove an auxiliary lemma.

\begin{lemm}\label{minsection}
In the conditions of Lemma~\ref{autcayley}, one of the following statements holds:

(1) $\rk(\mathcal{X}_U)=2;$

(2) $\mathcal{X}_U=\mathcal{X}_L\wr \mathcal{X}_{U/L}.$
\end{lemm}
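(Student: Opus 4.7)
The plan is to assume $\rk(\mathcal{X}_U)>2$ and prove $\mathcal{X}_U=\mathcal{X}_L\wr\mathcal{X}_{U/L}$ by applying the classification of Cayley schemes over abelian $p$-groups to $\mathcal{X}_U$ and using the minimality of $|E|$ to rule out every alternative. The argument rests on two preparatory observations. First, because $\mathcal{X}_{U/L}$ has rank~$2$ it is primitive, so Lemma~\ref{primitive} yields no $R\in\mathcal{E}(\mathcal{X})$ with $F\subsetneq R\subsetneq E$; equivalently, no $H\in\mathcal{H}(\mathcal{X})$ satisfies $L\subsetneq H\subsetneq U$. Second, the identity $\mathcal{H}(\mathcal{X}_U)=\{H\in\mathcal{H}(\mathcal{X}):H\leq U\}$ holds: the forward inclusion is immediate, while the reverse uses $D_{right}\leq\aut(\mathcal{X})$ acting transitively on the cosets of~$E$ to translate any $H$-coset equivalence on $U$ coherently into an equivalence of $\mathcal{X}$ on $D$. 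Combining the two, $\mathcal{H}(\mathcal{X}_U)$ contains no subgroup strictly between $L$ and $U$.

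The proof then splits on the isomorphism type of $U$. If $U$ is cyclic, Lemma~\ref{circulant} applied to $\mathcal{X}_U$ excludes the cyclotomic-with-trivial-radical case, since it would place the unique maximal proper subgroup of $U$---which is strictly between $L$ and $U$ because $|U/L|\geq p^2$---into $\mathcal{H}(\mathcal{X}_U)$; in the nontrivial $U^*/L^*$-wreath case with $\rad(\mathcal{X}_{U^*})=1_{U^*}$, the constraint $U^*<U$ forces $U^*\leq L$, and rerunning the cyclotomic exclusion on the section $\mathcal{X}_{U/L^*}$ (which has trivial radical and still contains the proper nontrivial subgroup $L/L^*$ whenever $L^*<L$) pins down $U^*=L^*=L$, yielding the standard wreath product. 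If $U\cong C_p\times C_p$, the composite-degree condition forces $L=1$, so $\mathcal{X}_U=\mathcal{X}_{U/L}$ already has rank~$2$. If $U\cong C_p\times C_{p^l}$ with $l\geq 2$, Lemma~\ref{Cayleyscheme} leaves three nontrivial subcases: the tensor product $\mathcal{X}_V\otimes\mathcal{X}_S$ with $\rk(\mathcal{X}_V)=2$ contradicts minimality, as $(1_D,s(V))\in\mathcal{F}(\mathcal{X})$ with $|s(V)|=|D|\cdot p^l<|D|\cdot p^{l+1}=|E|$; the cyclotomic case forces the characteristic subgroup of $U$ consisting of the elements of order at most $p^{l-1}$ (of index~$p$) into $\mathcal{H}(\mathcal{X}_U)$, and the bound $|L|\leq p^{l-1}$ places $L$ inside it, making that subgroup intermediate; the generalized wreath case proceeds exactly as in the cyclic analysis.

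I expect the main obstacle to lie in the case $U\cong C_p\times C_{p^l}$ with $l\geq 2$: the cyclotomic subcase requires identifying the characteristic subgroups of $U$ that are forced into $\mathcal{H}(\mathcal{X}_U)$, and the generalized wreath subcase requires iterating the no-intermediate argument on the section $\mathcal{X}_{U/L^*}$, whose own classification splits into several subcases that must be analyzed in turn. In every branch, the minimality of $|E|$ is the essential lever that rules out tensor-product and finer wreath decompositions otherwise permitted by the classification theorems.
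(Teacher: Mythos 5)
Your overall strategy is the paper's: apply the classification lemmas (Lemma~\ref{circulant} for cyclic $U$, Lemma~\ref{Cayleyscheme} for $U\cong C_p\times C_{p^l}$), kill the tensor alternative by minimality of $|E|$, kill the cyclotomic alternative by producing forbidden subgroups, and extract the standard wreath product from the generalized one. Your preparatory observations are sound, your base case $|U|=p^2$, your tensor count $|s(V)|=np^l<np^{l+1}=|E|$, and both cyclotomic exclusions all check out. The genuine gap is in the generalized-wreath branch for $U\cong C_p\times C_{p^l}$, which you dismiss with ``proceeds exactly as in the cyclic analysis.'' It does not: the very first move of your cyclic analysis — deducing $U^*\leq L$ from $U^*<U$ together with the absence of $H\in\mathcal{H}(\mathcal{X}_U)$ with $L\subsetneq H\subsetneq U$ — uses that the subgroups of a cyclic $p$-group form a chain. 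In $C_p\times C_{p^l}$ the lattice is not a chain, so $U^*$ may be incomparable with $L$ (with $LU^*=U$), and your no-intermediate observation says nothing about this configuration. The paper avoids comparability entirely: it sets $L_1=H_{F_1}$ with $F_1=\rad(\mathcal{X}_U)$ and shows $L_1\leq L$ by noting that otherwise $(F_1)_{U/F}$ would be a nontrivial relation inside $\rad(\mathcal{X}_{U/L})$, contradicting $\rk(\mathcal{X}_{U/L})=2$ — an argument that works uniformly for both isomorphism types of~$U$.

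Two secondary defects sit in the same branch. First, you assert that $\mathcal{X}_{U/L^*}$ has trivial radical, but $L^*$ is only the subgroup furnished by the wreath decomposition in the statement of Lemma~\ref{circulant}/\ref{Cayleyscheme}, which does not claim $s(L^*)=\rad(\mathcal{X}_U)$; the trivial-radical property is justified (as in the paper) only when one quotients by the radical subgroup $L_1$ itself, since otherwise $\rad(\mathcal{X}_U)$ would properly contain $F_1$. Second, when you rerun the classification on the quotient in the noncyclic case, Lemma~\ref{Cayleyscheme} reintroduces the tensor alternative $\mathcal{X}_{U/L_1}=\mathcal{X}_{V/L_1}\otimes \mathcal{X}_{S/L_1}$, which your ``cyclotomic exclusion'' does not touch; as in the paper, it must be ruled out by minimality a second time, via $(F_1,E_1)\in\mathcal{F}(\mathcal{X})$ with $H_{E_1}=V$ and $|E_1|<|E|$. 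With the radical-based reduction $L_1\leq L$ in place of your lattice argument and the tensor case excluded inside the quotient step, your proof closes and coincides in substance with the paper's.
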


\begin{proof}
If $|U|=p^2$ then $|L|=1$ because $(F,E)\in \mathcal{F}_{\min}(\mathcal{X})$. In this case Statement~1 of the lemma holds. Further we assume that $|U|\geq p^3$. The group $U$ is isomorphic to $C_{p^l}$ or $C_p\times C_{p^l}$ for some $l\leq k$. Firstly suppose that $\rad(\mathcal{X}_U)=1_U$. Then from Lemma~\ref{circulant} if $U\cong C_{p^l}$ and from Lemma~\ref{Cayleyscheme} if $U\cong C_p\times C_{p^l}$  it follows that 
$$\rk(\mathcal{X}_U)=2,~\text{or}~\mathcal{X}_U~\text{is cyclotomic, or}~\mathcal{X}_U=\mathcal{X}_V\otimes \mathcal{X}_S,$$ 
where $V<U$, $|V|\geq p^2$, and $\rk(\mathcal{X}_V)=2$. In the first case Statement~1 of the lemma holds. In the second case $\mathcal{H}(\mathcal{X}_{U/L})$ contains a nontrivial proper characteristic subgroup of $U/L$ because $|U/L|\geq p^2$. We obtain a contradiction because $\rk(\mathcal{X}_{U/L})=2$ and $\mathcal{H}(\mathcal{X}_{U/L})=\{\{L\}, U/L\}$. In the third case $|E_1|<|E|$, where $V=H_{E_1}$, $\rk(\mathcal{X}_V)=2$, and $|V|\geq p^2$. So $(F,E)\notin \mathcal{F}_{\min}(\mathcal{X})$, a contradiction.

Now suppose that $\rad(\mathcal{X}_U)>1_U$. Then $\mathcal{X}_U$ is the nontrivial generalized wreath product of two Cayley schemes by Lemma~\ref{circulant} if $U\cong C_{p^l}$ and by Lemma~\ref{Cayleyscheme} if $U\cong C_p\times C_{p^l}$.  Let $L_1=H_{F_1}$, where $F_1=\rad(\mathcal{X}_U)$. If $L_1\nleq L$ then $(F_1)_{U/F}\subseteq \rad(\mathcal{X}_{U/L})$ and $(F_1)_{U/F}$ is nontrivial, a contradiction with $\rk(\mathcal{X}_{U/L})=2$. So $L_1\leq L$. The scheme $\mathcal{X}_{U/L_1}$ has the trivial radical because otherwise $\rad(\mathcal{X}_U)$ is greater than $F_1$. Clearly, $\rk(\mathcal{X}_{(U/L_1)/(L/L_1)})=2$ and $|(U/L_1)/(L/L_1)|=|U/L|\geq p^2$. Lemma~\ref{circulant} in case $U/L_1\cong C_{p^m}$ and Lemma~\ref{Cayleyscheme} in case $U/L_1\cong C_p\times C_{p^m}$ implies that
$$\rk(\mathcal{X}_{U/L_1})=2,~\text{or}~\mathcal{X}_{U/L_1}~\text{is cyclotomic, or}~\mathcal{X}_{U/L_1}=\mathcal{X}_{V/L_1}\otimes \mathcal{X}_{S/L_1},$$ 
where $V/L_1<U/L_1$ and $\rk(\mathcal{X}_{V/L_1})=2$. Suppose that $\rk(\mathcal{X}_{U/L_1})=2$. Since $F_1$ is the radical of a highest basis relation of $\mathcal{X}_U$, there is exactly one basis relation of $\mathcal{X}_U$ outside $F_1$. So $L_1=L$ and Statement~2 of the lemma holds.  If  $\mathcal{X}_{U/L_1}$ is cyclotomic then $\mathcal{X}_{(U/L_1)/(L/L_1)}$ is also cyclotomic, a contradiction with $\rk(\mathcal{X}_{U/L_1})=2$. If $\mathcal{X}_{U/L_1}=\mathcal{X}_{V/L_1}\otimes \mathcal{X}_{S/L_1}$ then $(F_1,E_1)\in \mathcal{F}(\mathcal{X})$ and $|E_1|<|E|$ for $E_1=H_{V}$. This means that $(F,E)\notin \mathcal{F}_{\min}(\mathcal{X})$, a contradiction. The lemma is proved.
\end{proof}

\begin{proof}[Proof of the Lemma~\ref{autcayley}]
If $|D|=p^2$ then $U=D$, $L=e$, and $\rk(\mathcal{X})=2$. In this case $\aut(\mathcal{X})=\sym(D)$ and the lemma holds. Further we assume that $|D|\geq p^3$. Suppose that $\rad(\mathcal{X})=1_D$. Then one of the Statements~1-3 of Lemma~\ref{Cayleyscheme} holds for $\mathcal{X}$. If Statement~1 of Lemma~\ref{Cayleyscheme} holds for $\mathcal{X}$ then $\rk(\mathcal{X})=2$. So $L=e$, $\aut(\mathcal{X})=\sym(D)$, and hence the lemma holds. If Statement~2 of Lemma~\ref{Cayleyscheme} holds for $\mathcal{X}$ then $\mathcal{X}=\mathcal{X}_V\otimes \mathcal{X}_S$ for some $V,S\in \mathcal{H}(\mathcal{X})$ with $V\cong C_{p^k}$, $S\cong C_p$, $D=V\times S$, and $\rk(\mathcal{X}_V)=2$. Without loss of generality we may assume that $V=C$ and $S=B$. In this case $U=C$ and $L=e$ or $U=D$ and $L=B$. In the former case we obtain that
$$\aut(\mathcal{X})\geq\sym(C)\times B_{right}\geq \sym(C)\times \sym(Cb)\times \sym(Cb^{-1})$$
and the lemma holds. In the latter case $(F,E)\notin \mathcal{F}_{\min}(\mathcal{X})$ because $|U|<|D|$, $|U|\geq p^2$, and $\rk(\mathcal{X}_U)=2$. We obtain a contradiction with the assumption of the lemma.
%$$(\aut(\mathcal{X}))^{D/B}\geq (\sym(C)\times B_{right})^{D/B}=\sym(D/B)$$
 If Statement~3 of Lemma~\ref{Cayleyscheme} holds for $\mathcal{X}$ then $\mathcal{X}$ is cyclotomic and hence $\mathcal{X}_{U/L}$ is also cyclotomic, a contradiction with $\rk(\mathcal{X}_{U/L})=2$ and $|U/L|\geq p^2$. 

Now let $\rad(\mathcal{X})>1_D$. Then Lemma~\ref{Cayleyscheme} yields that $\mathcal{X}$ is the generalized wreath product of two Cayley schemes. Let $p^t=\max \limits_{g\in U} |g|$ and $D_t=\{g\in D: |g|\leq p^t\}\cong C_p\times C_{p^t}$. Clearly, $D_t=U$ or $U\cong C_{p^t}$ and $|D_t:U|=p$. Note that $D_t\in\mathcal{H}(\mathcal{X})$. Indeed, this is obvious if $U=D_t$ and follows from the description of Cayley schemes over $D$ given in Lemma~\ref{Cayleyscheme} otherwise. Let $E_1\in \mathcal{E}(\mathcal{X})$ such that $D_t=H_{E_1}$.

Let us prove that  
$$E\subseteq \rad(s)~\eqno(4)$$ 
for every basis relation $s$ of $\mathcal{X}$ outside $E_1$. Assume that there exists a basis relation $s$ outside $E_1$ with $E\nsubseteq \rad(s)$. From Lemma~\ref{minsection} it follows that there exists a basis relation $r$ of $\mathcal{X}$ such that $E=F\cup r$. Since $D_t\geq U$ and $s$ lies outside $E_1$, we conclude that $\langle s \rangle \cap r\neq \varnothing$. This yields that $r\subseteq \langle s \rangle$. Observe that $\langle r \rangle=E$. So $E\subseteq \langle s \rangle$. If $\rad(s)\cap r\neq \varnothing$ then $r\subseteq \rad(s)$ and hence $E=\langle r \rangle\subseteq \rad(s)$ which contradicts to our assumption. Therefore $\rad(s)\cap r=\varnothing$ and we have 
$\rad(s)\cap E=\rad(s)\cap F$. 

Let $U_1=H_{\langle s\rangle}$ and $L_1=H_{\rad(s)}$. The scheme $\mathcal{X}_{U_1/L_1}$ has the trivial radical. Since $\rad(s)\cap E=\rad(s)\cap F$, we obtain that $U\cap L_1=L\cap L_1$. This implies that $\pi(U)/\pi(L)\cong U/L$, where $\pi:D\rightarrow D/L_1$ is the canonical epimorphism. In particular, $|\pi(U)/\pi(L)|\geq p^2$. Also we have $\rk(\mathcal{X}_{\pi(U)/\pi(L)})=2$. Therefore $\mathcal{X}_{U_1/L_1}$ is a scheme with the trivial radical that has a section $\mathcal{X}_{\pi(U)/\pi(L)}$ of rank~2 and degree at least~$p^2$. Again, $\mathcal{X}_{U_1/L_1}$ can not be cyclotomic and hence $\rk(\mathcal{X}_{U_1/L_1})=2$ or $\mathcal{X}_{U_1/L_1}$ is the tensor product of a scheme of rank~2 and a scheme of degree~$p$. In both cases we have $\max \limits_{g\in\pi(U)} |g|=\max \limits_{g\in\pi(U_1)} |g|$. So $\max \limits_{g\in U} |g|=\max \limits_{g\in\ U_1} |g|$. This implies that $U_1\leq D_t$ and hence $s\subseteq E_1$. We obtain a contradiction with $s\nsubseteq E_1$. Thus,~(4) is proved.

Due to~(4) we conclude that $\mathcal{X}$ is the $D_t/U$-wreath product. If $D_t=U$ then $\mathcal{X}=\mathcal{X}_U\wr \mathcal{X}_{D/U}$.  If Statement~1 of Lemma~\ref{minsection} holds for $\mathcal{X}_U$ then $\rk(\mathcal{X}_U)=2$ and $L=e$. So
$$\aut(\mathcal{X})=\aut(\mathcal{X}_U)\wr \aut(\mathcal{X}_{D/U})\geq\sym(U)\wr (D/U)_{right}\geq \prod \limits_{\Delta\in D/U} \sym(\Delta)$$
and the lemma holds. If Statement~2 of Lemma~\ref{minsection} holds for $\mathcal{X}_U$ then $\mathcal{X}_U=\mathcal{X}_L\wr \mathcal{X}_{U/L}$. In this case we have
\begin{eqnarray}
\nonumber (\aut(\mathcal{X}))^{D/L}=((\aut(\mathcal{X}_L)\wr\aut(\mathcal{X}_{U/L}))\wr \aut(\mathcal{X}_{D/U}))^{D/L}\geq \\
\nonumber \geq ((\aut(\mathcal{X}_L)\wr \sym(U/L))\wr (D/U)_{right})^{D/L}\geq \prod \limits_{\Delta\in D/U} \sym(\Delta/L)
\end{eqnarray}
and the lemma also holds.

Consider the remaining case $|D_t:U|=p$. Put $K_0=\aut(\mathcal{X}_{D/U})$, $K_1=\aut(\mathcal{X}_{D_t})$, and for each $\Lambda, \Lambda^{'}\in D/D_t$ put $K_{\Lambda,\Lambda^{'}}=(D_tg^{-1})_{right}K_1(D_tg^{'})_{right}$, where $g,g^{'}\in D$ such that $D_tg=\Lambda$ and $D_tg^{'}=\Lambda^{'}$. Since $|D_t:U|=p\leq 3$, we have $K_0^{D_t/U}=K_1^{D_t/U}$. So $K_0$, $K_1$, and $K_{\Lambda,\Lambda^{'}}$ satisfy~(11) and~(12) from \cite[Section~5.2]{EP3}. Therefore $\aut(\mathcal{X})=K_1\wr_{D_t/U} K_0$ (see \cite[Definition~5.3, Theorem~5.4]{EP3}). 

Lemma~\ref{Cayleyscheme} and Lemma~\ref{minsection} imply that $\mathcal{X}_{D_t}=\mathcal{X}_U\otimes \mathcal{X}_S$ for some $S\in \mathcal{H}(\mathcal{X})$ with $|S|=p$ whenever $L=e$ and $\mathcal{X}_{D_t}=\mathcal{X}_L\wr(\mathcal{X}_{U/L}\otimes \mathcal{X}_{S/L})$ for some $S\in \mathcal{H}(\mathcal{X})$ with $|S/L|=p$ whenever $L>e$. This implies that $K_1=\sym(U)\times \aut(\mathcal{X}_S)$ or $K_1=\aut(\mathcal{X}_L)\wr (\sym(U/L) \times \aut(\mathcal{X}_{S/L}))$. In both cases 
$$K_1^{D_t/L}\geq \prod \limits_{\Delta\in D_t/U} \sym(\Delta/L).~\eqno(5)$$
Since $\aut(\mathcal{X})=K_1\wr_{D_t/U} K_0$, applying~(5) and~\cite[(7)]{EP3} to $\aut(\mathcal{X})$, we obtain that
$$\aut(\mathcal{X})^{D/L}\geq \prod \limits_{\Delta\in D/U} \sym(\Delta/L).$$
Thus, the lemma is proved.
\end{proof}

\section{Quasinormal schemes}

From now on until the end of the paper $\Omega$ is a set of size $n=p^{k+1}$, where $p$ is a prime  and $k\geq 1$. Let $p\in\{2,3\}$.  In view of Statement~1 of Lemma~\ref{sep}, each scheme of degree~$p$ and the Paley scheme are normal and primitive.  A feasible scheme $\mathcal{X}$ on the set $\Omega$ of size $p^{k+1}$, where $p\in\{2,3\}$ and $k\geq 1$, is said to be \emph{quasinormal} if for every $\mathcal{X}^{'}\in \mathcal{Q}(\mathcal{X})_{prim}$ one of the following statements holds: 

(1) $\mathcal{X}^{'}$ has degree $p$; 

(2) $\mathcal{X}^{'}$ is isomorphic to the Paley scheme.

\begin{lemm}\label{quasitest}
Given a coherent configuration $\mathcal{X}$ on $n=p^{k+1}$ points, where $p\in\{2,3\}$ and $k\geq 1$, one can test in time $\poly(n)$ whether $\mathcal{X}$ is a quasinormal scheme.
\end{lemm}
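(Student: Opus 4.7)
The plan is to reduce the quasinormality test directly to the tools already established in Section~2, exploiting the strong uniqueness property of the Paley scheme as a shortcut.

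First, I would apply Lemma~\ref{feastest} to decide in time $\poly(n)$ whether $\mathcal{X}$ is feasible. If it is not, then $\mathcal{X}$ fails the very first requirement of being a quasinormal scheme and we return ``no''. Otherwise, the same lemma simultaneously produces the complete list $\mathcal{Q}(\mathcal{X})_{prim}$, of cardinality polynomial in $n$, within the same time bound.

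Next, by the definition of quasinormality, it remains to verify that each $\mathcal{X}' \in \mathcal{Q}(\mathcal{X})_{prim}$ either has degree $p$ or is isomorphic to the Paley scheme. The first condition is an immediate comparison of the number of points of $\mathcal{X}'$ with $p$. For the second condition, the key observation is the uniqueness statement labelled (a) inside the proof of Lemma~\ref{sep}: the Paley scheme is the \emph{unique} primitive scheme of degree $9$ and rank $3$ up to isomorphism. Since every element of $\mathcal{Q}(\mathcal{X})_{prim}$ is primitive by construction, checking isomorphism with the Paley scheme reduces to the purely numerical conditions $p = 3$, degree equal to $9$, and $\rk(\mathcal{X}') = 3$, without any explicit isomorphism computation.

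The anticipated main obstacle --- recognizing the Paley scheme among the primitive sections of $\mathcal{X}$ --- is thus dissolved by Lemma~\ref{sep}. All remaining steps (the feasibility test, the enumeration of primitive sections, and the constant-time numerical checks per section) run in time $\poly(n)$, so the total running time is $\poly(n)$, as required.
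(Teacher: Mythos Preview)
Your proposal is correct and follows essentially the same approach as the paper's proof: both invoke Lemma~\ref{feastest} to test feasibility and enumerate $\mathcal{Q}(\mathcal{X})_{prim}$, then perform a constant-time check on each primitive section. Your write-up is slightly more explicit than the paper's in justifying the Paley-scheme recognition via the uniqueness statement from Lemma~\ref{sep}, whereas the paper simply asserts the constant-time check without spelling it out.
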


\begin{proof}
From Lemma~\ref{feastest} it follows that one can test whether $\mathcal{X}$ is feasible in time $\poly(n)$. If $\mathcal{X}$ is not feasible then it is not a quasinormal scheme. If $\mathcal{X}$ is feasible then the set $\mathcal{Q}(\mathcal{X})_{prim}$ of all primitive sections of $\mathcal{X}$ has the  size polynomial in~$n$. Lemma~\ref{feastest} implies that one can list in time $\poly(n)$ all elements of $\mathcal{Q}(\mathcal{X})_{prim}$. For every section from $\mathcal{Q}(\mathcal{X})_{prim}$ one can test in the constant time whether it has degree~$p$ or it is isomorphic to the Paley scheme. Thus, one can test whether $\mathcal{X}$ is a quasinormal scheme in time $\poly(n)$ and the lemma is proved.
\end{proof}

The main goal of this section is to show that for every feasible quasinormal scheme $\mathcal{X}$ of degree~$n$ the group $\aut(\mathcal{X})$ can be constructed in time $\poly(n)$. Firstly we show that there exists a solvable group $K$ containing $\aut(\mathcal{X})$ and $K$ can be constructed efficiently. Here and further throughout the paper a permutation group on $n$ points is always determined by a strong generating set containing at most~$n^2$ generators (see~\cite{Seress}).

\begin{center}
\textbf{Algorithm QNRMAUT}
\end{center}

\textbf{Input:} A quasinormal scheme $\mathcal{X}=(\Omega,S)$ of degree~$n=p^{k+1}$, where $p\in\{2,3\}$ and $k\geq 1$.

\textbf{Output:} A solvable group $K$ such that $K\geq \aut(\mathcal{X})$.

\textbf{Step 1.} Find a maximal path $1_{\Omega}=E_0\subseteq \ldots \subseteq E_m=\Omega^2$ in $\mathcal{E}(\mathcal{X})$ and for each $i\in \{0,\ldots,m\}$ choose $\Delta_i\in \Omega/E_i$ such that $\Delta_0\subseteq \ldots \subseteq\Delta_m=\Omega$.

\textbf{Step 2.} For each $i\in \{1,\ldots,m\}$ find the group $H_i=\aut(\mathcal{X}_{\Delta_i/E_{i-1}})$.

\textbf{Step 3.} Set $K_m=H_m$. For each $i=m-1,\ldots,1$ successively set  $K_i=H_i\wr K_{i+1}$.

\textbf{Step 4.} Output $K=K_1$.

\begin{prop}\label{qnrmaut}
Algorithm \textup{QNRMAUT} correctly constructs the group $K$ in time $\poly(n)$.
\end{prop}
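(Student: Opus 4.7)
The plan is to check each step of Algorithm QNRMAUT in turn, verifying that it runs in time $\poly(n)$ and that the resulting group $K$ is solvable and contains $\aut(\mathcal{X})$.

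Step~1 reduces directly to Lemma~\ref{maxpath}: since any quasinormal scheme is feasible by definition, a maximal chain $1_{\Omega}=E_0\subsetneq \ldots \subsetneq E_m=\Omega^2$ in $\mathcal{E}(\mathcal{X})$ can be found in $\poly(n)$ time, and picking any $\alpha\in \Omega$ and setting $\Delta_i$ to be the $E_i$-class of $\alpha$ yields the nested chain for free, with $m\le k+1$. For Step~2, maximality of the chain together with Lemma~\ref{primitive} forces every section $\mathcal{X}_{\Delta_i/E_{i-1}}$ to be primitive, and the quasinormality of $\mathcal{X}$ then forces it to have degree $p\in\{2,3\}$ or to be isomorphic to the Paley scheme (of degree~$9$). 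In every case $H_i$ is a solvable group of order at most $72$ by Lemma~\ref{sep}, so each $H_i$ can be written down explicitly in constant time; the total cost of Step~2 is $O(m)=O(\log n)$.

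For Step~3 and correctness of the output, I would invoke the classical imprimitivity embedding for transitive permutation groups with a chain of invariant block systems. Choosing at each level a transversal for $E_{i-1}$ in $\Delta_i$ identifies $\Omega$ with $\prod_{j=1}^m(\Delta_j/E_{j-1})$; since every $f\in \aut(\mathcal{X})$ preserves each $E_i$, the induced actions on consecutive quotients yield
\[
\aut(\mathcal{X})\;\le\; H_1 \wr H_2 \wr \ldots \wr H_m,
\]
where the wreath product is the iterated imprimitive one and each factor $\aut(\mathcal{X})^{\Delta_i/E_{i-1}}$ sits inside $\aut(\mathcal{X}_{\Delta_i/E_{i-1}})=H_i$. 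Associativity of the imprimitive wreath product identifies the right-hand side with the iteratively built $K_1=H_1\wr(H_2\wr \ldots \wr H_m)=K$. Solvability of $K$ then follows by induction on $i$, since $H_i\wr K_{i+1}$ is solvable whenever both factors are.

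The only real obstacle is the bookkeeping in Step~3: one must maintain $K_i$ as a permutation group on $\prod_{j=i}^m(\Delta_j/E_{j-1})$ by a small generating set. Since $|H_i|$ is bounded and $K_{i+1}$ is already presented by the previous iteration, a generating set for $H_i\wr K_{i+1}$ of size $|H_i|+|\mathrm{gens}(K_{i+1})|$ is immediate, and the standard Schreier--Sims machinery converts this into a strong generating set of polynomial size in polynomial time. Iterating through all $m=O(\log n)$ levels gives the overall $\poly(n)$ bound.
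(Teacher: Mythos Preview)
Your overall plan matches the paper's, but there is a real gap in the correctness argument for Step~3.

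You write that ``choosing at each level a transversal for $E_{i-1}$ in $\Delta_i$ identifies $\Omega$ with $\prod_j(\Delta_j/E_{j-1})$'' and then conclude $\aut(\mathcal{X})\le H_1\wr\ldots\wr H_m$. The standard imprimitivity (Kaloujnine--Krasner) embedding only gives $\aut(\mathcal{X})\le \sym(\Delta_1/E_0)\wr\ldots\wr\sym(\Delta_m/E_{m-1})$. To land in $H_i=\aut(\mathcal{X}_{\Delta_i/E_{i-1}})$ rather than in the full symmetric group, every base coordinate of $f\in\aut(\mathcal{X})$---that is, the map $\Delta/E_{i-1}\to\Delta^f/E_{i-1}$ pre- and post-composed with the identifying bijections to $\Delta_i/E_{i-1}$---must be an automorphism of $\mathcal{X}_{\Delta_i/E_{i-1}}$. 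This holds only if those identifying bijections are themselves \emph{isomorphisms of coherent configurations}, not arbitrary transversal maps. Your sentence ``each factor $\aut(\mathcal{X})^{\Delta_i/E_{i-1}}$ sits inside $H_i$'' checks only the one distinguished block $\Delta_i$, not all $\Delta\in\Omega/E_i$. The sections $\mathcal{X}_{\Delta/E_{i-1}}$ are a priori only \emph{algebraically} isomorphic to $\mathcal{X}_{\Delta_i/E_{i-1}}$ (Section~2.3), and it is precisely the separability statement of Lemma~\ref{sep} that upgrades this to a genuine isomorphism and hence makes such identifications possible. The paper invokes this explicitly; your argument omits it, and with an arbitrary transversal the inclusion can fail (already for the rank-$3$ section on three points, where $H_i\cong C_3\neq S_3$).

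A smaller point in the same spot: you appeal to the embedding ``for transitive permutation groups'', but quasinormality does not force $\aut(\mathcal{X})$ to be transitive (we do not yet know $\mathcal{X}\in\mathcal{K}_D$). The paper avoids any transitivity hypothesis by arguing at the level of coherent configurations: once separability gives $\mathcal{X}_{\Delta/E_{i-1}}\cong\mathcal{X}_{\Delta_i/E_{i-1}}$ for all $\Delta$, relations~(1) and~(2) yield $\mathcal{X}\ge \mathcal{X}_{\Delta_1}\wr(\mathcal{X}_{\Delta_2/E_1}\wr\ldots)=\mathcal{Y}$, whence $\aut(\mathcal{X})\le\aut(\mathcal{Y})=K$ directly.
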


\begin{proof}
The scheme $\mathcal{X}_{\Delta/E_{i-1}}$ is primitive for every $i\in\{1,\ldots,m\}$ and every $\Delta\in \Omega/E_i$. Indeed, if $\mathcal{X}_{\Delta/E_{i-1}}$ is not primitive for some $i$ and $\Delta\in \Omega/E_i$ then due to Lemma~\ref{primitive} there exists $E^{'}\in \mathcal{E}(\mathcal{X})$ such that $E_{i-1} \subsetneq E^{'}  \subsetneq E_i$. So $E_0\subseteq \ldots \subseteq E_m$ is not a maximal path, a contradiction. 

Since $\mathcal{X}$ is quasinormal, $\mathcal{X}_{\Delta/E_{i-1}}$ has degree $p$ or $\mathcal{X}_{\Delta/E_{i-1}}$ is isomorphic to the Paley scheme for every $i\in\{1,\ldots,m\}$ and every $\Delta\in \Omega/E_i$. For every $i\in\{1,\ldots,m\}$ and every $\Delta\in \Omega/E_i$ the coherent configuration $\mathcal{X}_{\Delta/E_{i-1}}$ is algebraically isomorphic to $\mathcal{X}_{\Delta_i/E_{i-1}}$. So $\mathcal{X}_{\Delta/E_{i-1}}\cong \mathcal{X}_{\Delta_i/E_{i-1}}$ for every $i\in\{1,\ldots,m\}$ and every $\Delta\in \Omega/E_i$ because each scheme of degree~$p$ and the Paley scheme are separable by Statement~1 of Lemma~\ref{sep}. This yields that on Step 3 each wreath product of permutation groups acting imprimitively is well-defined. Now applying (1) and (2) $m$ times we obtain that
$$\mathcal{X}\geq \mathcal{X}_{\Delta_1}\wr \mathcal{X}_{\Omega/E_1}\geq \ldots \geq \mathcal{X}_{\Delta_1} \wr (\mathcal{X}_{\Delta_2/E_1} \wr (\mathcal{X}_{\Delta_3/E_2} \wr \ldots (\mathcal{X}_{\Delta_{m-1}/E_{m-2}}\wr \mathcal{X}_{\Omega/E_{m-1}})\ldots )=\mathcal{Y}.$$ 
Clearly, $\aut(\mathcal{X})\leq \aut(\mathcal{Y})$. The definition of $K$ implies that $K=\aut(\mathcal{Y})$. So $K\geq \aut(\mathcal{X})$. The group $H_i$ is solvable for every $i\in\{1,\ldots,m\}$ by Statement~2 of Lemma~\ref{sep}. Therefore each $K_i$ is also solvable. In particular, $K=K_1$ is solvable.

From Lemma~\ref{maxpath} it follows that Step 1 requires time $\poly(n)$. For each $i\in \{1,\ldots,m-1\}$ the section $\mathcal{X}_{\Delta_i/E_{i-1}}$ can be constructed in polynomial time (see Subsection~2.5). Since $m\leq n^2$ and for each $i\in \{1,\ldots,m-1\}$ the section $\mathcal{X}_{\Delta_i/E_{i-1}}$ has degree at most~9, Step 2 can be done in time $\poly(n)$. Each $K_i$ is solvable and hence it can be constructed efficiently on Step~3. The proposition is proved.
\end{proof}

\begin{lemm}\label{qnrmaut2}
Let $\mathcal{X}$ be a quasinormal scheme of degree~$n=p^{k+1}$, where $p\in\{2,3\}$ and $k\geq 1$. Then the group $\aut(\mathcal{X})$ can  be found in time $\poly(n)$.
\end{lemm}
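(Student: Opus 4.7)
The plan is to invoke Algorithm QNRMAUT from Proposition~\ref{qnrmaut}, which in time $\poly(n)$ returns a solvable permutation group $K\leq\sym(\Omega)$ with $\aut(\mathcal{X})\leq K$. The task then reduces to carving $\aut(\mathcal{X})$ out of $K$.

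By the very definition of the automorphism group of a coherent configuration,
$$\aut(\mathcal{X}) \;=\; \{\,f\in K : s^f = s\text{ for every }s\in S\,\},$$
so $\aut(\mathcal{X})$ is the intersection over $s\in S$ of the setwise stabilizers $K_s$, taken with respect to the componentwise action of $K$ on $\Omega\times\Omega$. Since $K$ is solvable, so is its image under this induced action, which has degree $n^2$. Each setwise stabilizer of a subset of $\Omega\times\Omega$ inside a solvable permutation group can be computed in time $\poly(n)$ by the classical polynomial-time algorithm of Luks for solvable groups.

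Algorithmically I would enumerate $S=\{s_1,\ldots,s_r\}$ with $r\leq n^2$, set $K^{(0)}:=K$, and iteratively compute $K^{(i)}:=(K^{(i-1)})_{s_i}$ for $i=1,\ldots,r$. Each $K^{(i)}$ is a subgroup of the solvable group $K$, hence solvable, so Luks's algorithm remains applicable at every stage, and after $r$ iterations we obtain $K^{(r)}=\aut(\mathcal{X})$ within polynomial time. The only real obstacle is the invocation of the correct black box, namely the polynomial-time solvability of the setwise stabilizer problem for solvable permutation groups; granted this, Lemma~\ref{qnrmaut2} follows immediately from Proposition~\ref{qnrmaut}.
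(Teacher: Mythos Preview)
Your proposal is correct and takes essentially the same approach as the paper: apply Algorithm QNRMAUT to obtain a solvable overgroup $K\geq\aut(\mathcal{X})$, then carve out $\aut(\mathcal{X})$ inside $K$ using the polynomial-time solvability of the setwise stabilizer problem in solvable permutation groups. The paper simply invokes \cite[Theorem~8.4]{EP1} as a black box for this second step, whereas you spell out the underlying mechanism (iterating over $s\in S$ and computing stabilizers via Luks's algorithm) explicitly.
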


\begin{proof}
Let $K=\qnrmaut(\mathcal{X})$. Then $K\geq \aut(\mathcal{X})$, $K$ is solvable, and $K$ can be found in time $\poly(n)$ by Proposition~\ref{qnrmaut}. Now \cite[Theorem~8.4]{EP1} implies that the group $\aut(\mathcal{X})\cap K=\aut(\mathcal{X})$ also can be found in time $\poly(n)$.
\end{proof}

\section{Singular schemes}

 A feasible scheme $\mathcal{X}$ on the set $\Omega$ of size $p^{k+1}$, where $p\in\{2,3\}$ and $k\geq 1$, is said to be \emph{singular} if $\mathcal{F}(\mathcal{X})\neq \varnothing$. Clearly, $\mathcal{X}$ is singular if and only if $\mathcal{F}_{\min}(\mathcal{X})\neq \varnothing$. 

\begin{lemm}\label{singtest}
Given a coherent configuration $\mathcal{X}$ on $n=p^{k+1}$ points, where $p\in\{2,3\}$ and $k\geq 1$, one can test in time $\poly(n)$ whether $\mathcal{X}$ is a singular scheme and if so find within the same time the sets $\mathcal{F}(\mathcal{X})$ and $\mathcal{F}_{\min}(\mathcal{X})$.
\end{lemm}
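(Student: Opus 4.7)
The plan is to reduce singularity testing to an exhaustive scan over pairs of equivalence relations of $\mathcal{X}$, exploiting the polynomial bound on $|\mathcal{E}(\mathcal{X})|$ available once we know $\mathcal{X}$ is feasible.

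First, apply Lemma~\ref{feastest} to test in time $\poly(n)$ whether $\mathcal{X}$ is feasible. If not, then $\mathcal{X}$ is not a (feasible) singular scheme and we output $\mathcal{F}(\mathcal{X})=\mathcal{F}_{\min}(\mathcal{X})=\varnothing$. Otherwise, Lemma~\ref{feastest} also lists all elements of $\mathcal{E}(\mathcal{X})$ in time $\poly(n)$, and in particular $|\mathcal{E}(\mathcal{X})|\leq n^2$.

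Next, enumerate all pairs $(F,E)\in\mathcal{E}(\mathcal{X})^2$ with $F\subseteq E$; there are at most $|\mathcal{E}(\mathcal{X})|^2\leq n^4$ of them. For each such pair, pick any class $\Delta\in\Omega/E$, construct the section $\mathcal{X}_{\Delta/F}$ in time $\poly(n)$ (as noted in Subsection~2.5), and test the two conditions that define membership in $\mathcal{F}(\mathcal{X})$: whether the degree $|\Delta/F_{\Delta}|$ is composite (trivial integer factorization check, since this number divides $n$ and hence is at most~$n$) and whether $\rk(\mathcal{X}_{\Delta/F})=2$ (one comparison of $|S_{\Delta/F}|$ with $2$). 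By the remark following the definition of $\mathcal{F}(\mathcal{X})$ in Subsection~2.3, the validity of these two conditions for one representative $\Delta$ is equivalent to their validity for every $\Delta'\in\Omega/E$, so this single test is correct. Collect all accepted pairs into $\mathcal{F}(\mathcal{X})$.

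Finally, if $\mathcal{F}(\mathcal{X})=\varnothing$ then $\mathcal{X}$ is not singular; otherwise $\mathcal{X}$ is singular, and we compute $m=\min_{(F,E)\in\mathcal{F}(\mathcal{X})}|E|$ and extract $\mathcal{F}_{\min}(\mathcal{X})=\{(F,E)\in\mathcal{F}(\mathcal{X}):|E|=m\}$ by a single pass through the list. Each step above runs in $\poly(n)$ time and there are at most polynomially many iterations, so the whole procedure is $\poly(n)$. No single step is a real obstacle; the only thing to check is that feasibility bounds $|\mathcal{E}(\mathcal{X})|$ polynomially, which is exactly what Lemma~\ref{feastest} gives.
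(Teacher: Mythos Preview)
Your proof is correct and follows essentially the same approach as the paper's proof: both reduce the problem to feasibility testing via Lemma~\ref{feastest}, use the resulting polynomial bound on $|\mathcal{E}(\mathcal{X})|$, and then scan over pairs of equivalence relations to build $\mathcal{F}(\mathcal{X})$ and $\mathcal{F}_{\min}(\mathcal{X})$. You simply spell out the details (picking a representative class $\Delta$, invoking the remark in Subsection~2.3, bounding the number of pairs by $n^4$) that the paper leaves implicit.
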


\begin{proof}
Lemma~\ref{feastest} yields that  one can check whether $\mathcal{X}$ is feasible in time $\poly(n)$. If $\mathcal{X}$ is not feasible then it is not a singular scheme. If $\mathcal{X}$ is feasible then due to Lemma~\ref{feastest} one can find the set $\mathcal{E}(\mathcal{X})$ in time $\poly(n)$ and this set has the size  polynomial in~$n$. So one can test whether $\mathcal{F}(\mathcal{X})\neq \varnothing$ and if so  find the sets $\mathcal{F}(\mathcal{X})$ and $\mathcal{F}_{\min}(\mathcal{X})$  also in time $\poly(n)$. The lemma is proved. 
\end{proof}

Further we will show that for every singular scheme $\mathcal{X}$ one can construct in polynomial time a coherent configuration $\mathcal{Y}$ possessing the following properties: (1) $\mathcal{Y}>\mathcal{X}$; (2) the group $\aut(\mathcal{Y})$ controls regular subgroups from $\Reg(\aut(\mathcal{X}),D)$, i.e.  for every $G\in \Reg(\aut(\mathcal{X}),D)$ there exists $h\in \aut(\mathcal{X})$ such that $h^{-1}Gh\leq \aut(\mathcal{Y})$. Clearly, every $D$-base of $\aut(\mathcal{Y})$ contains a $D$-base of $\aut(\mathcal{X})$.

\begin{center}
\textbf{Algorithm RESOLVE}
\end{center}

\textbf{Input:} A singular scheme $\mathcal{X}=(\Omega,S)$ of degree~$n=p^{k+1}$, where $p\in\{2,3\}$ and $k\geq 1$, and $(F,E)\in \mathcal{F}_{\min}(\mathcal{X})$.

\textbf{Output:} A coherent configuration $\mathcal{Y}$ possessing properties (1)-(2).

\textbf{Step 1.} For every $\Delta\in \Omega/E$ choose a fixed-point-free permutation $c_{\Delta}\in \sym(\Delta/F_{\Delta})$ of order $p$.

\textbf{Step 2.} Put $R=\bigcup \limits_{\Delta\in \Omega/E} \bigcup \limits_{\Lambda\in \Delta/F_{\Delta}} \Lambda \times \Lambda^{c_{\Delta}}$.

\textbf{Step 3.} Output $\mathcal{Y}=\WL(\mathcal{X},\{R\})$.

\begin{prop}\label{resolve}
Algorithm \textup{RESOLVE} correctly constructs the coherent configuration $\mathcal{Y}$ in time $\poly(n)$.
\end{prop}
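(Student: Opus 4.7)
The plan is to verify three things in turn: that $\mathcal{Y}$ strictly refines $\mathcal{X}$ (property (1)); that $\aut(\mathcal{Y})$ controls every $G\in\Reg(\aut(\mathcal{X}),D)$ (property (2)); and that every step of the algorithm runs in polynomial time.

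For (1), I would show that $R$ is not a union of basis relations of $\mathcal{X}$; by the construction of the Weisfeiler--Leman extension this forces $\mathcal{Y}>\mathcal{X}$. Pushing $R$ to the quotient $\Omega/F$ and restricting to any single class $\Delta/F_\Delta$ yields the graph of the permutation $c_\Delta$, a relation with exactly $|\Delta/F_\Delta|$ edges and no loops. Since $(F,E)\in\mathcal{F}(\mathcal{X})$, the section $\mathcal{X}_{\Delta/F}$ has rank~$2$, so its only basis relations are $1_{\Delta/F_\Delta}$ and its complement; the composite-degree clause in the definition of $\mathcal{F}(\mathcal{X})$ forces $|\Delta/F_\Delta|\geq 4$, and the graph of $c_\Delta$ coincides with neither of them.

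For (2), which I expect to be the main obstacle, I fix $G\in\Reg(\aut(\mathcal{X}),D)$ and identify $\Omega$ with $D$ so that $G=D_{right}$; then $\mathcal{X}$ is a Cayley scheme over $D$ with subgroups $U=H_E$ and $L=H_F$ playing the roles of $E$ and $F$. Since $(F,E)\in\mathcal{F}_{\min}(\mathcal{X})$, Lemma~\ref{autcayley} gives $\aut(\mathcal{X})^{D/L}\supseteq \prod_{\Delta\in D/U}\sym(\Delta/L)$. A direct check, using the fact that every automorphism of $\mathcal{X}$ respects the equivalence $F$ and hence acts on $\Omega/L$, shows that $\sigma\in\aut(\mathcal{X})$ preserves $R$ if and only if $\sigma^{\Omega/L}$ commutes with the order-$p$ permutation $\tau$ of $D/L$ induced by $R_{\Omega/L}$, whose restriction to each $\Delta/L$ coincides with $c_\Delta$. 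I then pick $a\in U/L$ of order $p$, which exists because $|U/L|\geq p^2$; its right-translation action on $D/L$ is a fixed-point-free permutation $\tau_a$ of order $p$ preserving each $U$-coset. On each $\Delta/L$ both $c_\Delta$ and $\tau_a$ are fixed-point-free of order $p$ and therefore conjugate in $\sym(\Delta/L)$, so I can assemble $\bar h\in\prod_{\Delta}\sym(\Delta/L)$ with $\bar h\tau\bar h^{-1}=\tau_a$. By Lemma~\ref{autcayley}, $\bar h$ lifts to some $h\in\aut(\mathcal{X})$. For every $g\in G$, the projection $(h^{-1}gh)^{\Omega/L}=\bar h^{-1}\bar g\bar h$ commutes with $\tau$ if and only if $\bar g$ commutes with $\tau_a$, and the latter is automatic since $D/L$ is abelian and $\tau_a\in(D/L)_{right}$. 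Lemma~\ref{autext} then yields $h^{-1}Gh\leq\aut(\mathcal{Y})$.

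Polynomial running time is straightforward: Step~1 is trivial combinatorial bookkeeping, Step~2 writes out an explicit relation on $n$ points, and Step~3 invokes the Weisfeiler--Leman algorithm, which runs in polynomial time in $n$. The genuinely delicate point is (2); the key insight is that ``$\sigma$ preserves $R$'' translates into a centralizer condition on the $\Omega/L$-projection, which reduces the problem to conjugating $\tau$ into the abelian regular subgroup $G^{\Omega/L}$, and Lemma~\ref{autcayley} is precisely the tool needed to lift the conjugating permutation back into $\aut(\mathcal{X})$.
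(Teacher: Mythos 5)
Your proposal is correct and takes essentially the same approach as the paper: both arguments lift a blockwise conjugator through Lemma~\ref{autcayley} to match the fixed-point-free order-$p$ permutations $c_\Delta$ with (the projection of) an order-$p$ element of the regular abelian group, and both deduce $R$-invariance from commutativity of $G$ — your centralizer reformulation of ``$\sigma$ preserves $R$'' is precisely what the paper's computation (equalities (6)--(8)) verifies block by block before invoking Lemma~\ref{autext}. The only cosmetic difference is the direction of conjugation: you conjugate $\tau$ into $(D/L)_{right}$, while the paper conjugates an element $x\in G_0$ onto the $c_\Delta$'s.
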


\begin{proof}
The definition of the extension implies that $\mathcal{Y}\geq \mathcal{X}$. Note that for every $\Delta \in \Omega/E$  the relation $\{(\Lambda, \Lambda^{c_{\Delta/F}}): \Lambda\in \Delta/F_{\Delta}\}\subseteq (\Delta/F_{\Delta})^2$ has valency~1 and it is a union of some basic relations of $\mathcal{Y}_{\Delta/F}$. So $\rk(\mathcal{Y}_{\Delta/F})>2$ and hence $\mathcal{Y}_{\Delta/F}>\mathcal{X}_{\Delta/F}$ for every $\Delta \in \Omega/E$. Therefore $\mathcal{Y}\neq \mathcal{X}$ and we conclude that  $\mathcal{Y}>\mathcal{X}$.

If $\Reg(\aut(\mathcal{X}),D)=\varnothing$ then  $\Reg(\aut(\mathcal{Y}),D)=\varnothing$ because $\aut(\mathcal{X})\geq \aut(\mathcal{Y})$. Now suppose that $\Reg(\aut(\mathcal{X}),D)\neq\varnothing$. This means that $\mathcal{X}\in \mathcal{K}_D$.  Let $G\in \Reg(\aut(\mathcal{X}),D)$. To prove the correctness of the algorithm it is sufficient to prove that $h^{-1}Gh\leq \aut(\mathcal{Y})$ for some $h\in \aut(\mathcal{X})$. Since $G$ is transitive and abelian, for every $\Delta\in \Omega/E$ the groups $G^{\Delta/F}$ and $G^{\Omega/E}$ are also transitive and abelian and hence they are regular. Denote by $G_0$ the kernel of the natural epimorphism from $G$ to $G^{\Omega/E}$. Observe that $G_0=G_{\Delta}$ for every $\Delta\in \Omega/E$ because $G^{\Omega/E}$ is regular. 

Let $\Delta_0 \in \Omega/E$. Choose $x_{\Delta_0}\in G^{\Delta_0/F}$ with $|x_{\Delta_0}|=p$. Let $x\in G_0=G_{\Delta_0}$ such that $x^{\Delta_0/F}=x_{\Delta_0}$. Since $G$ acts regularly on $\Omega/F$, we conclude that $x^{\Omega/F}$ is a product of disjoint cycles of the same length. This implies that $x^{\Omega/F}$ is a product of cycles of length $p$ because $|x^{\Delta_0/F}|=|x_{\Delta_0}|=p$. Therefore for every $\Delta\in \Omega/E$ the element $x^{\Delta/F}$ is a fixed-point-free permutation of order~$p$. So for every $\Delta\in \Omega/E$ there exists $h_{\Delta}\in \sym(\Delta/F_{\Delta})$ such that
$$h_{\Delta}^{-1}x^{\Delta/F}h_{\Delta}=c_{\Delta}.~\eqno(6)$$
Let $g\in G$. Then $g^{-1}xg=x$ because $G$ is abelian. So $g^{\Omega/F}$ permutes $x_{\Delta}$, $\Delta\in \Omega/E$, and
$$(g^{\Omega/F})^{-1}x^{\Delta/F}g^{\Omega/F}=x^{\Delta^g/F}~\eqno(7)$$
for every $\Delta\in \Omega/E$. 

Due to Lemma~\ref{autcayley}, there exists $h\in \aut(\mathcal{X})$ such that $h^{\Delta/F}=h_{\Delta}$. Put $G^{'}=h^{-1}Gh$. Let us prove that $G^{'}\leq \aut(\mathcal{Y})$. For every $\Delta\in \Omega/E$, every $\Lambda\in \Omega/F$ with $\Lambda\subseteq \Delta$, and every $g^{'}=h^{-1}gh\in G^{'}$ we have 
$$(\Lambda^{c_{\Delta}})^{g^{'}}=(\Lambda^{g^{'}})^{c_{\Delta^g}}.~\eqno(8)$$
Indeed,
\begin{eqnarray}
\nonumber (\Lambda^{c_{\Delta}})^{g^{'}}=(\Lambda^{c_{\Delta}})^{h^{-1}gh}=(\Lambda^{c_{\Delta}h^{-1}_{\Delta}})^{gh}=(\Lambda^{h^{-1}_{\Delta}x^{\Delta/F}})^{gh}=((\Lambda^{h^{-1}_{\Delta}})^g)^{g^{-1}x^{\Delta/F}gh}=\\
\nonumber =(\Lambda^{h^{-1}_{\Delta}})^{gx^{\Delta^g/F}h_{\Delta^g}}=(\Lambda^{h^{-1}_{\Delta}gh_{\Delta^g}})^{c_{\Delta^g}}=(\Lambda^{g^{'}})^{c_{\Delta^g}}.
\end{eqnarray}
In the above computation the third and the sixth equalities hold in view of~(6) and the fifth equality holds in view of~(7). Now using~(8), we obtain that
$$R^{g^{'}}=\bigcup \limits_{\Delta\in \Omega/E} \bigcup \limits_{\Lambda\in \Delta/F_{\Delta}} \Lambda^{g^{'}} \times (\Lambda^{c_{\Delta}})^{g^{'}}=\bigcup \limits_{\Delta\in \Omega/E} \bigcup \limits_{\Lambda\in \Delta/F_{\Delta}} \Lambda^{g^{'}} \times (\Lambda^{g^{'}})^{c_{\Delta^g}}=R$$
for every $g^{'}\in G^{'}$. Therefore $G^{'}\leq \aut(\mathcal{Y})$ by Lemma~\ref{autext}.

The Weisfeiler-Leman algorithm used on Step~3 requires time polynomial in~$n$ (see Subsection~2.6). So Algorithm RESOLVE requires time $\poly(n)$ and the proposition is proved.
\end{proof}

\section{Finding a $D$-base of a permutation group}

The main goal of this section is to show that a $D$-base of a permutation $p$-group can be found in polynomial time in the degree of this group. In this section $p$ is an arbitrary prime. Firstly we prove that a $D$-base of a permutation group of degree~$n$ has the  size polynomial in~$n$.

\begin{lemm}\label{basesize}
Let $K\leq \sym(\Omega)$. Then $b_D(K)\leq \frac{(p-1)^2 p!}{p^p} n^{p+2}$.
\end{lemm}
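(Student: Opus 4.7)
The plan is to associate to each regular $D$-subgroup $G\leq K$ a family of labeled tuples in $\Omega^{p+2}$, then bound $b_D(K)$ by counting $K$-orbits on these tuples. Specifically, for each $G\in\Reg(K,D)$, each ordered generating pair $(c,b)\in G\times G$ with $|c|=p^k$ and $|b|=p$, and each base point $\alpha\in\Omega$, I will consider the tuple
\[
T(G,c,b,\alpha)=(\alpha,\,\alpha^c,\,\alpha^b,\,\alpha^{b^2},\,\ldots,\,\alpha^{b^{p-1}},\,\alpha^{cb})\in\Omega^{p+2}.
\]
Regularity of $G$ ensures the $p+2$ coordinates are pairwise distinct, and for fixed $G$ the map $(c,b,\alpha)\mapsto T$ is injective: $\alpha$ is the first coordinate, and $c,b$ are recovered as the unique elements of $G$ sending $\alpha$ to the second and third coordinates respectively. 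In particular each $G$ contributes exactly $n\cdot|\aut(D)|$ distinct tuples.

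The key equivariance is that the diagonal action of $K$ (conjugation on $(G,c,b)$ and the natural action on $\alpha$) intertwines with the coordinate-wise $K$-action on $\Omega^{p+2}$. I will then observe that the $K$-stabilizer of any quadruple $(G,c,b,\alpha)$ is trivial: an element fixing both $c$ and $b$ centralizes $G=\langle c,b\rangle$; centralizing a regular subgroup while also fixing $\alpha$ forces the identity on every point of $\alpha^G=\Omega$. Hence every $K$-orbit of quadruples has size exactly $|K|$, so a conjugacy class $[G]$ contributes precisely $n|\aut(D)|/|N_K(G)|$ distinct orbits of quadruples. Since $|N_K(G)|$ divides $n\,|\aut(D)|$ (the holomorph bound $|N_{\sym(\Omega)}(G)|=n|\aut(D)|$ applies), each class contributes at least one orbit, and different classes contribute disjoint sets of orbits; thus $b_D(K)$ is bounded above by the total number of $K$-orbits on quadruples.

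Finally, I will bound the number of quadruple orbits through the equivariant map into $K$-orbits on $\Omega^{p+2}$, where the orbit count is trivially at most $n^{p+2}$. The multiplicity factor $(p-1)^2 p!/p^p$ will come from the combinatorial constraints a valid tuple must satisfy: the sub-tuple $(\alpha,\alpha^b,\ldots,\alpha^{b^{p-1}})$ must trace a single $\langle b\rangle$-orbit in the cyclic order induced by $b$ (producing the factor $p!/p^p$ relative to arbitrary $p$-tuples in $\Omega$), and within a given $G\cong D$ the ordered generating pair $(c,b)$ can be independently modified within the $C_{p^k}$- and $C_p$-factors by elements of $(\mathbb{Z}/p\mathbb{Z})^\times$, yielding the $(p-1)^2$ type-preserving choices.

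The main obstacle is the last step: precisely quantifying how many $K$-orbits of quadruples can project to a common $K$-orbit of tuples, and translating the structural constraints into the explicit coefficient $\tfrac{(p-1)^2 p!}{p^p}$. This requires a careful enumeration of the ways distinct regular $D$-subgroups of $K$ can agree on a $p+2$-point sample, exploiting the abelian structure of $D$, the semi-regularity of $\langle b\rangle$ and $\langle c\rangle$, and the fact that the sampled coordinates determine the $\langle b\rangle$-orbit of $\alpha$ completely but only pin down two points of the $\langle b\rangle$-orbit of $\alpha^c$.
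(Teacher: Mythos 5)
Your setup is correct as far as it goes, but note that its first half is essentially the paper's argument in orbit-counting dress: the free action of $K$ on quadruples $(G,c,b,\alpha)$, the fact that each class contributes $n|\aut(D)|/|N_K(G)|\geq 1$ orbits, and the holomorph bound $|N_K(G)|\leq |N_{\sym(\Omega)}(G)|=n|\aut(D)|$ are exactly equivalent to the paper's lower bound $|W_i|=|K|/|N_K(G_i)|\geq |K|/((p-1)^2n^2)$ on conjugacy class sizes in $W=\Reg(K,D)$. What remains, in your language, is to show that the number of quadruple orbits is at most $\frac{(p-1)^2p!}{p^p}n^{p+2}$; since all quadruple orbits have size $|K|$, this is the same as showing $|\Reg(K,D)|\leq |K|(n/p)^p\,p!$. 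This is precisely the step you defer, and it is the heart of the lemma.

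Moreover, the route you sketch for that step cannot work as architected. You propose to bound the number of quadruple orbits by (multiplicity over a common tuple orbit) times (number of $K$-orbits on $\Omega^{p+2}$, bounded trivially by $n^{p+2}$). But every nonempty fiber of the projection contributes multiplicity at least $1$, whereas the target constant $\frac{(p-1)^2p!}{p^p}$ is strictly less than $1$ for $p\in\{2,3\}$ (namely $1/2$ and $8/9$), so no multiplicity analysis on top of the trivial bound can produce it; the constant must come from shrinking the count of realizable configurations, not from comparing orbit fibers. Your heuristic factor assignments ($p!/p^p$ from the cyclic order of the $\langle b\rangle$-orbit, $(p-1)^2$ from rescaling generators) do not correspond to an actual enumeration. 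The missing idea, which the paper supplies, is a centralizer computation: a generator $c'$ of order $n/p$ and degree $n$ has exactly $p$ cycles of length $n/p$, so $C_{\sym(\Omega)}(c')\cong C_{n/p}\wr\sym(p)$ has order $(n/p)^p\,p!$; hence for each of the at most $|K|$ choices of $c'$ there are at most $(n/p)^p\,p!$ choices of $b'\in C_K(c')$, giving $|\Reg(K,D)|\leq |K|(n/p)^p\,p!$, and dividing by the class-size lower bound yields the stated inequality. Without this (or an equivalent) count, your proposal establishes only $b_D(K)\leq$ the number of quadruple orbits, with no effective bound on the latter, so the proof is genuinely incomplete.
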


\begin{proof}
Consider the action of $K$ by conjugation on the set $W=\Reg(K,D)$. Let $W_1,\ldots,W_l$ be the orbits of this action and $D_i\in W_i$. Then $l=b_D(K)$. The stabilizer $K_{D_i}$ coincides with the normalizer $N_K(D_i)$. So $|W_i|=|K|/|N_K(D_i)|$. Since $N_K(D_i)/C_K(D_i)\leq \aut(D_i)$ and $D_i$ is generated by two elements of orders $p$ and $n/p$, we obtain that $|N_K(D_i)|/|C_K(D_i)|\leq |\aut(D_i)|\leq (p-1)^2n$. The centralizer of a regular group is regular. So $|C_K(D_i)|=n$ and $|N_K(D_i)|\leq (p-1)^2n^2$. This implies that $|W_i|\geq |K|/((p-1)^2n^2)$ and hence 

$$|W|=|W_1|+\ldots+|W_l|\geq l \frac{|K|}{(p-1)^2n^2}.~\eqno(9)$$ 

Now estimate the size of $W$. Every group $D^{'}$ from $W$ is generated by two elements $c^{'}$ and $b^{'}$ of degree $n$ such that $|c^{'}|=n/p$, $|b^{'}|=p$, and $b^{'}\in C_K(c^{'})$. The group $C_{\sym(\Omega)}(c^{'})$ is isomorphic to $C_{n/p}\wr \sym(p)$. This yields that $|C_K(c^{'})|\leq |C_{n/p}\wr \sym(p)|=(n/p)^p (p!)$. So for a fixed generator $c^{'}$ a generator $b^{'}$ can be chosen by at most $(n/p)^p (p!)$ ways. A generator $c^{'}$ can be chosen by at most $|K|$ ways. Therefore $|W|\leq |K|(n/p)^p (p!)$.  Thus from~(9) it follows that $l\frac{|K|}{(p-1)^2n^2} \leq |K|(n/p)^p (p!)$ and hence $l\leq \frac{(p-1)^2 p!}{p^p}n^{p+2}$. The lemma is proved.
\end{proof}

It should be mentioned that a cycle base of a permutation group of degree~$n$ has size at most $\varphi(n)$, where $\varphi$ is the Euler function (\cite{M1}).

The following can be found in~\cite{Seress}. Let $K\leq \sym(\Omega)$. Then one can check whether $K$ is transitive, primitive, regular in time $\poly(n)$. If $K$ is imprimitive then one can find the maximal and the minimal block systems for $K$ within the same time. Given a homomorphism $\psi: K\rightarrow \sym(\Omega^{'})$ and a set $M\subseteq K^{\psi}$ one can construct the groups $\ker(\psi)$, $K^{\psi}$, and the set $M^{\psi^{-1}}$ also in polynomial time. If $K$ is solvable and $K_1,K_2\leq K$ then one can find $C_K(K_1)$ and test whether $K_1$ and $K_2$ are $K$-conjugate in time $\poly(n)$. 

In \cite{EP1} it was proved that a cycle base of a solvable permutation group of degree~$n$ can be found in time $\poly(n)$. The next lemma directly follows from \cite[Theorem 6.1]{EP1}.

\begin{lemm}\label{conjbase}
Let $K\leq \sym(\Omega)$, $c\in\sym(\Omega)$, and $c^{-1}Kc=K$. Suppose that $\Omega=\Delta_0\cup \ldots \cup \Delta_{m-1}$ is a partition of $\Omega$, $(\Delta_i)^K=\Delta_i$, and $(\Delta_i)^c=\Delta_{i+1}$ for all $i$ modulo $m$. Then the set $X\subseteq Kc$ such that $Kc=\bigcup_{g\in K} X^g$ and $|X|\leq |K^{\Delta_0}|$ can be found (as the list of elements) in time $\poly(nr)$, where $r=|K^{\Delta_0}|$.
\end{lemm}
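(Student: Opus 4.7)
The plan is to reduce the statement directly to \cite[Theorem 6.1]{EP1}, as the author indicates in the sentence preceding the lemma. That theorem was stated in the language of cycle bases, but its proof uses only the abstract setting of a $K$-stable coset $Kc$ together with a $K$-invariant block system cyclically permuted by~$c$, which is precisely the hypothesis here. The goal of the proposal is therefore to explain how the hypotheses match and how the algorithm of \cite{EP1} produces the required transversal~$X$.

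First, I would verify that $K$ acts by conjugation on $Kc$: for $g\in K$ and $kc\in Kc$, the hypothesis $c^{-1}Kc=K$ gives $g^{-1}(kc)g=(g^{-1}k(cgc^{-1}))c\in Kc$, so the orbits of this action are exactly what $X$ is required to represent. Since every element of $Kc$ induces the same $m$-cycle $\Delta_i\mapsto\Delta_{i+1}$ on the block system, the $m$-th power $(kc)^m$ stabilises $\Delta_0$ for every $k\in K$, and one has a well-defined restriction map
\[
\rho\colon Kc\to K^{\Delta_0},\qquad \rho(kc)=(kc)^m\big|_{\Delta_0},
\]
which is equivariant with respect to conjugation by $K_{\Delta_0}$. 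Elements of $Kc$ whose $\rho$-images are nonconjugate in $K^{\Delta_0}$ cannot themselves be $K$-conjugate, which already gives the bound $|X|\leq|K^{\Delta_0}|=r$ built into the statement.

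Second, I would run the algorithm underlying \cite[Theorem 6.1]{EP1}: enumerate a transversal $Y$ for the conjugation orbits of $K^{\Delta_0}$ on its image $\rho(Kc)$, lift each $y\in Y$ to some preimage $x_y\in Kc$ by solving one coset-membership problem, and output $X=\{x_y:y\in Y\}$. Enumeration of $K^{\Delta_0}$, the conjugacy tests in it, and the lifting are all polynomial-time operations by the standard toolkit for solvable permutation groups recalled earlier in this section, giving a total cost of $\poly(nr)$. The main obstacle, as I see it, is the bookkeeping required to confirm that the cycle-base argument of \cite{EP1} relies only on this abstract coset-and-block structure rather than on any cyclicity of the regular subgroups it constructs; I expect this verification to be routine, since the role played in \cite{EP1} by a cyclic generator of order $n$ is taken over here by the coset representative~$c$ together with the block-cyclic structure assumed in the lemma.
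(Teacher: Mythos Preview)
The paper itself gives no proof of this lemma; it simply records that the statement ``directly follows from \cite[Theorem~6.1]{EP1}.'' Your proposal does the same, so at the level of approach you agree with the paper.

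However, the extra commentary you supply around the citation contains a genuine gap. The implication you establish, ``if $\rho(x)$ and $\rho(x')$ are nonconjugate in $K^{\Delta_0}$ then $x$ and $x'$ are nonconjugate in $K$,'' is the \emph{easy} direction: it only says that $\rho$ induces a well-defined surjection from the $K$-conjugacy classes on $Kc$ onto the $K^{\Delta_0}$-conjugacy classes on $\rho(Kc)$. A surjection gives a \emph{lower} bound on the number of $K$-orbits, not the upper bound $|X|\le r$ you claim it yields. For the same reason, your proposed algorithm---pick one lift $x_y$ per conjugacy class $y$ in the image---produces a set of size $\le r$, but you have not shown it meets every $K$-orbit on $Kc$; that requires precisely the reverse implication (injectivity of the induced map on conjugacy classes), which is the nontrivial content of \cite[Theorem~6.1]{EP1} and which you never state, let alone justify. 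A secondary issue: nothing in the hypotheses forces $c^m\in K$, so it is not clear that your map $\rho$ even lands in $K^{\Delta_0}$ as written.

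In short, deferring to \cite{EP1} is exactly what the paper does and is fine; but if you want to sketch the mechanism, you must identify the hard direction correctly rather than derive the bound from the trivial one.
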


\begin{lemm}\label{sylow}
Let $K\leq \sym(\Omega)$ and $P$ a Sylow $p$-subgroup of $K$. The every $D$-base of $P$ contains a $D$-base of $K$.
\end{lemm}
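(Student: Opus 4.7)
The plan is to show that every $K$-conjugacy class on $\Reg(K,D)$ is represented by some element of $B_D$, and then to extract from $B_D$ a maximal subset of pairwise $K$-non-conjugate subgroups, which will serve as the desired $D$-base of $K$.

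First I would observe that any $D' \in \Reg(K,D)$ is a $p$-subgroup of $K$ (it has order $n = p^{k+1}$). Hence by Sylow's theorem it sits inside some Sylow $p$-subgroup $P'$ of $K$, and there exists $g \in K$ with $g^{-1}Pg = P'$. Conjugating back, $gD'g^{-1} \leq P$, and since conjugation by an element of $\sym(\Omega)$ preserves both the property of being regular and the isomorphism type, $gD'g^{-1} \in \Reg(P,D)$. Thus every $K$-orbit on $\Reg(K,D)$ meets $\Reg(P,D)$.

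Next, since $B_D$ is a $D$-base of $P$, every element of $\Reg(P,D)$ is $P$-conjugate to some element of $B_D$. Combining with the previous step and using $P \leq K$ (so $P$-conjugacy implies $K$-conjugacy), I obtain that every element of $\Reg(K,D)$ is $K$-conjugate to some element of $B_D$.

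Finally, choose $B_D' \subseteq B_D$ to be a maximal subset of pairwise $K$-non-conjugate subgroups (concretely: group the elements of $B_D$ into $K$-conjugacy classes and pick one representative from each class that intersects $B_D$). By construction the elements of $B_D'$ are pairwise $K$-non-conjugate, and by the previous paragraph every element of $\Reg(K,D)$ is $K$-conjugate to some element of $B_D'$, so $B_D'$ is a $D$-base of $K$ contained in $B_D$. There is no real obstacle here; the only subtle point is to note that Sylow's theorem applies because $D$ itself is a $p$-group, which is exactly what lets a regular $D$-subgroup be transported into the fixed Sylow $p$-subgroup $P$.
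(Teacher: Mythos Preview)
Your proof is correct and follows essentially the same approach as the paper's: use Sylow's theorem to conjugate any $D'\in\Reg(K,D)$ into $P$ (since $D$ is a $p$-group), then use that a $D$-base of $P$ meets every $P$-conjugacy class in $\Reg(P,D)$, and finally extract a maximal $K$-non-conjugate subset. The paper's proof is terser but the argument is identical.
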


\begin{proof}
If  $G\in \Reg(K,D)$ then the Sylow theorem implies that $h^{-1}Gh\leq P$ for some $h\in K$. So $b_D(P)\neq 0$ if and only if $b_D(K)\neq 0$. If $b_D(K)\neq 0$ then every $G\in \Reg(K,D)$ is $K$-conjugate to some group from a $D$-base of $P$ and we are done.
\end{proof}

 A Sylow $p$-subgroup of a permutation group of degree~$n$ can be found in time $\poly(n)$  by the Kantor algorithm (see~\cite{Seress}).  The algorithm below  constructs a $D$-base of a permutation $p$-group of degree~$n$ in time $\poly(n)$. In this algorithm we assume that $p$ is a constant.

\begin{center}
\textbf{Algorithm PDBASE}
\end{center}

\textbf{Input:} A permutation $p$-group $P\leq \sym(\Omega)$ of degree~$n=p^{k+1}$, where $p$ is a prime  and $k\geq 1$.

\textbf{Output:} A $D$-base $B_D$ of $P$, where $|D|=n$.

\textbf{Step 1.} If $P$ is not transitive then output $B_D=\varnothing$. 

\textbf{Step 2.} If $k=1$ then find $B_D$ by brute force and output $B_D$. If $k>1$ then find an imprimitivity system $\{\Delta_1,\ldots,\Delta_{n/p}\}$ of $P$ such that $|\Delta_i|=p$ for every $i\in \{1,\ldots,n/p\}$. Construct the groups $P^{\psi}$ and $\ker(\psi)$, where $\psi$ is the natural epimorphism from $P$ to $P$ acting on $\{\Delta_1,\ldots,\Delta_{n/p}\}$.

\textbf{Step 3.} Recursively find a $D_{k-1}$-base $B_{D_{k-1}}(P^{\psi})$ of $P^{\psi}$. Find  a $C$-base $B_{C}(P^{\psi})$ of $P^{\psi}$ and  the set $\overline{S}=\{\overline{h}\in B_{D_{k-1}}(P^{\psi}):|\overline{h}|=n/p^2\}\cup \{\overline{h}\in B_{C}(P^{\psi}):|\overline{h}|=n/p\}$. 

\textbf{Step 4.} For every $\overline{h}\in \overline{S}$ find $h\in P$ with $h^{\psi}=\overline{h}$ and then construct the set $X_h$ such that $|X_h|\leq p$ and $\ker(\psi)h=\bigcup\limits_{g\in \ker(\psi)} X_h^g$. Put $X=\bigcup \limits_{\overline{h}\in \overline{S}} X_h$. Find the set $T=\{x\in X:x~\text{is of degree}~n~\text{and}~|x|=n/p\}$.

 \textbf{Step 5.} For every $x\in T$ construct the set $Y_x=\{y\in C_P(x):|y|=p, y\notin \langle x \rangle\}$. Find the set $F=\{\langle x \rangle \times \langle y \rangle: x\in T, y\in Y_x\}$.

\textbf{Step 6.} For every $G\in F$ test whether $G$ is regular; if no then put $F=F\setminus \{G\}$. For every $G_1,G_2\in F$ test whether $G_1$ and $G_2$ are $P$-conjugate; if so put $F=F\setminus \{G_2\}$.

\textbf{Step 7.} Output $B_D=F$. 

\begin{prop}\label{dbase}
Algorithm \textup{PDBASE} correctly finds a $D$-base $B_D$ of $P$  in time $\poly(n)$.
\end{prop}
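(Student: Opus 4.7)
The plan is to proceed by induction on $k$, with the base case $k=1$ being immediate: since $p$ is treated as a constant, $n=p^2$ is bounded and Step~2's brute-force enumeration of candidate regular $D$-subgroups of $P\le \sym(\Omega)$ runs in constant time. For $k\ge 2$ I will analyze the recursive step by (i) proving a structural correctness claim and then (ii) bounding each step by a polynomial in~$n$.

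The core correctness claim is that every $G\in \Reg(P,D)$ is $P$-conjugate to some $\langle x\rangle\times\langle y\rangle$ constructed in Step~5. Since $G$ is transitive and abelian and $\ker(\psi)$ consists of block-preserving permutations, $G^\psi$ is transitive abelian of order $n/p$, hence regular on the block set, so $|G\cap\ker(\psi)|=p$. The subgroup $G\cap\ker(\psi)$ has order $p$ in $G\cong C_p\times C_{p^k}$, so either it equals the unique subgroup of order~$p$ of the cyclic factor, in which case $G^\psi\cong C_p\times C_{p^{k-1}}\cong D_{k-1}$, or it does not, in which case $G^\psi\cong C_{p^k}\cong C$. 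By the inductive hypothesis applied to $P^\psi$ together with the cycle-base algorithm of~\cite{EP1}, the chosen sets $B_{D_{k-1}}(P^\psi)$ and $B_{C}(P^\psi)$ contain, up to $P^\psi$-conjugacy, every regular $D_{k-1}$- or $C$-subgroup of $P^\psi$. Lifting this conjugating element of $P^\psi$ to $P$ and replacing $G$ by a $P$-conjugate, I may assume that $G^\psi$ contains a cyclic generator $\overline h\in \overline S$ of order $n/p^2$ or $n/p$, respectively.

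Next I would lift $\overline h$ to any preimage $h\in P$ and apply Lemma~\ref{conjbase} to the solvable group $\ker(\psi)$ and the coset $\ker(\psi)h$, obtaining a set $X_h$ of size at most $|\ker(\psi)^{\Delta_0}|\le p$ whose $\ker(\psi)$-conjugates exhaust $\ker(\psi)h$. A cyclic generator $g_0$ of the $C_{p^k}$-factor of $G$ satisfies $g_0^\psi=\overline h$, hence after a further $\ker(\psi)$-conjugation we have $g_0\in X_h$. Regularity of $G$ forces $g_0$ to have $|g_0|=n/p$ and cycle type consisting of $p$ cycles of length $n/p$; in particular $g_0\in T$. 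The $C_p$-factor of $G$ is then an element $y_0\in C_P(g_0)$ of order $p$ with $y_0\notin\langle g_0\rangle$, so $y_0\in Y_{g_0}$ and $G=\langle g_0\rangle\times\langle y_0\rangle\in F$. Step~6 removes non-regular candidates and merges $P$-conjugacy classes, yielding a $D$-base.

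For the running time, Step~2 invokes standard block-system algorithms from~\cite{Seress}; Step~3 recurses once on a group of degree $n/p$ and invokes the polynomial-time cycle-base algorithm of~\cite{EP1}; the $D_{k-1}$- and $C$-bases have size $\poly(n)$ by Lemma~\ref{basesize} and its cyclic analogue. Step~4 applies Lemma~\ref{conjbase} with $r\le p$ constant and so runs in $\poly(n)$, and $|T|\le|X|\le p|\overline S|$ is polynomial. Step~5 uses solvable-group centralizer computation, and $|Y_x|\le |C_P(x)|$ is polynomial; $|F|$ remains polynomial. Step~6's regularity and $P$-conjugacy tests are polynomial for the solvable group $P$ (see~\cite{Seress}). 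The main obstacle is isolating the dichotomy $G^\psi\cong C$ vs.\ $G^\psi\cong D_{k-1}$ and verifying that $\overline S$ captures both families up to $P^\psi$-conjugacy; a secondary subtlety is bounding $|F|$ throughout the construction, for which Lemma~\ref{basesize} and the constant bound $|X_h|\le p$ are decisive.
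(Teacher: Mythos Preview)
Your proposal is correct and follows essentially the same route as the paper: the dichotomy $G^\psi\cong C$ versus $G^\psi\cong D_{k-1}$, the use of the recursively computed bases to pin down $g_1^\psi\in\overline S$, the application of Lemma~\ref{conjbase} to the coset $\ker(\psi)h$, and the bound $t(n)\le t(n/p)+\poly(n)$ are exactly the paper's argument. The one place where you are terser than the paper is the invocation of Lemma~\ref{conjbase} when $|\overline h|=n/p^2$: in that case $h$ does not cycle the original blocks $\Delta_1,\dots,\Delta_{n/p}$ transitively, so the paper explicitly regroups them into $p$-fold unions $\Lambda_i=\bigcup_j\Delta_{i+jn/p^2}$ to obtain the cyclic partition Lemma~\ref{conjbase} requires; your sketch writes $|\ker(\psi)^{\Delta_0}|\le p$ as if the original blocks always work, which they do not, though the repaired bound $|\ker(\psi)^{\Lambda_1}|\le p^p$ is still a constant and your conclusion survives.
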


\begin{proof}
If $P$ is not transitive then $\Reg(P,D)=\varnothing$ and the algorithm terminates on Step~1. Suppose that $P$ is transitive. The imprimitivity system $\{\Delta_1,\ldots,\Delta_{n/p}\}$ on Step~2 exists because a $p$-group is primitive if and only if it is of order and degree~$p$. By the definition of $F$, after Step~6 we have $F\subseteq \Reg(P,D)$ and all groups from $F$ are pairwise nonconjugate in $P$. If $\Reg(P,D)=\varnothing$ then $F=\varnothing$. Now let $\Reg(P,D)\neq\varnothing$ and $G\in \Reg(P,D)$. To prove the correctness of the algorithm  it is sufficient to prove that $G$ is $P$-conjugate to some group from $F$. Let $g_1$ and $g_2$ be generators of $G$ of orders $n/p$ and $p$ respectively. The group $G^{\psi}$ is transitive and abelian and hence it is regular. Clearly, $G^{\psi}\cong D_{k-1}$ or $G^{\psi}\cong C$. So $G^{\psi}$ is $P^{\psi}$-conjugate to some group from $B_{D_{k-1}}(P^{\psi})\cup B_{C}(P^{\psi})$. This implies that $g_1^{\psi}$ is $P^{\psi}$-conjugate to some element from $\overline{S}$. Therefore $g_1$ is $P$-conjugate to some element $x\in T$. Let $h\in P$ such that $h^{-1}g_1h=x$. Since $g_2\in C_P(g_1)$, we obtain that $y=h^{-1}g_2h\in C_P(x)$ and hence $y\in Y_x$. Thus, $h^{-1}Gh\in F$.

Denote the running time of the algorithm applied to a group of degree~$n$ by $t(n)$. Let us prove that $t(n)$ is polynomial in~$n$. The discussion before Lemma~\ref{conjbase} yields that Steps~1-2  can be done in time $\poly(n)$. One can construct the set $B_{D_{k-1}}(P^{\psi})$ in time $t(n/p)$. From Lemma~\ref{basesize} it follows that $|B_{D_{k-1}}(P^{\psi})|\leq cn^{p+2}$ for $c=\frac{(p-1)^2 p!}{p^p}$. The set $B_{C}(P^{\psi})$ can be constructed in polynomial time by using Algorithm A3 from \cite{EP1} and $|B_{C}(P^{\psi})|\leq \varphi(n)$ by~\cite[Theorem~1.5]{M1}. Therefore Step~3 requires time $t(n/p)+\poly(n)$ and the set $S$ has the size polynomial in~$n$.  

Due to the discussion before Lemma~\ref{conjbase} for every $\overline{h}\in \overline{S}$ the element $h\in P$ with $h^{\psi}=\overline{h}$ can be found in time $\poly(n)$. By the definition of $\psi$, we have $(\Delta_i)^{\ker(\psi)}=\Delta_i$ for every $i\in \{1,\ldots,n/p\}$. Let $\overline{h}\in \overline{S}$ and $h\in P$ such that $h^{\psi}=\overline{h}$. Since  $\ker(\psi)$ is normal in $P$, we conclude that  $h^{-1}\ker(\psi)h=\ker(\psi)$. If $|\overline{h}|=n/p$ then without loss of generality we may assume that
$$\overline{h}=(\Delta_1 \ldots \Delta_{n/p}).$$ 
So $(\Delta_{i})^{h}=\Delta_{i+1}$ for every $i\in\{1,\ldots,n/p-1\}$ and $(\Delta_{n/p})^h=\Delta_1$. Therefore $\ker(\psi)$, $h$, and $\Delta_1,\ldots,\Delta_{n/p}$ satisfy the conditions of Lemma~\ref{conjbase}.
If $|\overline{h}|= n/p^2$ then without loss of generality we may assume that
$$\overline{h}=(\Delta_1 \ldots \Delta_{n/p^2})(\Delta_{n/p^2+1} \ldots \Delta_{2n/p^2})\ldots (\Delta_{(p-1)n/p^2+1} \ldots \Delta_{n/p}).$$
For every $i\in \{1,\ldots, n/p^2\}$ put 
$$\Lambda_i=\bigcup \limits_{j\in\{0,\ldots,p-1\}} \Delta_{i+jn/p^2}.$$ 
Then $\Lambda_i^h=\Lambda_{i+1}$ for every  $i\in\{1,\ldots,n/p^2-1\}$ and $\Lambda_{n/p^2}^h=\Lambda_1$. Therefore in this case $\ker(\psi)$, $h$, and $\Lambda_1,\ldots,\Lambda_{n/p}$ satisfy the conditions of Lemma~\ref{conjbase}. Now due to Lemma~\ref{conjbase} for every $\overline{h}\in \overline{S}$ the set $X_h$ can be constructed in time $\poly(n)$ and $|X_h|\leq |\ker(\psi)^{\Delta_1}|\leq p$. Since $\overline{S}$ has the polynomial size, the sets $X$ and $T$ have the polynomial sizes. Thus, Step~4 requires time $\poly(n)$. 

The group $P$ is solvable. So in view of the discussion before Lemma~\ref{conjbase}, Steps~5-6 require time $\poly(n)$. Thus, $t(n)\leq t(n/p)+\poly(n)$ and we are done by induction. The proposition is proved.
\end{proof}

\section{Main algorithm}

In this section we construct a polynomial-time algorithm for finding a $D$-base of the automorphism group of an arbitrary coherent  configuration in case when $|D|=n=p^{k+1}$, where $p\in\{2,3\}$ and $k\geq 1$.

\begin{lemm}\label{quassing}
Let $p\in\{2,3\}$. Then every scheme from $\mathcal{K}_D$ is quasinormal or singular.
\end{lemm}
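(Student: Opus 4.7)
The plan is to prove the dichotomy by contrapositive: assuming $\mathcal{X}\in\mathcal{K}_D$ fails to be quasinormal, I will exhibit a witness in $\mathcal{F}(\mathcal{X})$, showing $\mathcal{X}$ is singular. First I note that both notions require feasibility, and $\mathcal{X}$ is automatically feasible: it is algebraically isomorphic to a Cayley scheme over $D\cong C_p\times C_{p^k}$, and the paper has already recorded that every such Cayley scheme is feasible since each subgroup of $D$ is generated by at most two elements.

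Suppose $\mathcal{X}$ is not quasinormal. Then, by definition, there is a primitive section $\mathcal{X}'\in\mathcal{Q}(\mathcal{X})_{prim}$ that neither has degree $p$ nor is isomorphic to the Paley scheme. Lemma~\ref{primsection} classifies every primitive section of a Cayley scheme over $D$ into exactly three mutually exhaustive cases: rank~2, degree~$p$, or the Paley scheme. The last two are excluded, so we must be in the first case: $\rk(\mathcal{X}')=2$.

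Write $\mathcal{X}'=\mathcal{X}_{\Delta/F}$ for some $F,E\in\mathcal{E}(\mathcal{X})$ with $F\subseteq E$ and some $\Delta\in\Omega/E$. The degree of $\mathcal{X}'$ is $|\Delta/F_\Delta|$, which in a Cayley scheme equals $|H_E|/|H_F|$, a divisor of $|D|=p^{k+1}$ and hence a power of $p$. Since this power of $p$ is not $p$, it is at least $p^2$, and in particular composite. Thus the section $\mathcal{X}_{\Delta/F}$ has composite degree and rank~2.

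By the observation recorded immediately after the definition of $\mathcal{F}(\mathcal{X})$, it suffices that some $\Delta\in\Omega/E$ gives a section of composite degree and rank~2 in order to conclude $(F,E)\in\mathcal{F}(\mathcal{X})$. Therefore $\mathcal{F}(\mathcal{X})\neq\varnothing$, which means $\mathcal{X}$ is singular, completing the contrapositive. There is no real obstacle here beyond matching the definitions; the genuine work is all packaged inside Lemma~\ref{primsection}, which supplies the crucial classification of primitive sections over $D$ that forces the rank-2 alternative once degree $p$ and Paley are ruled out.
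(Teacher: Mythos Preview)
Your proof is correct and follows exactly the approach the paper intends: the paper's own proof is the single line ``Follows from Lemma~\ref{primsection},'' and you have simply unpacked that deduction---using the classification of primitive sections to force rank~$2$, then observing the degree is a power of~$p$ distinct from~$p$, hence composite, so $(F,E)\in\mathcal{F}(\mathcal{X})$. One tiny wording nit: $\mathcal{K}_D$ consists of schemes \emph{isomorphic} (not merely algebraically isomorphic) to Cayley schemes over~$D$, though this does not affect your argument.
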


\begin{proof}
Follows from Lemma~\ref{primsection}.
\end{proof}

\begin{center}
\textbf{Main algorithm}
\end{center}

\textbf{Input:} A coherent configuration $\mathcal{X}=(\Omega,S)$ of degree $n=p^{k+1}$, where $p\in\{2,3\}$, and $k\geq 1$.

\textbf{Output:} A $D$-base $B_D$ of $\aut(\mathcal{X})$, where $|D|=n$.

\textbf{Step 1.} Put $\mathcal{X}_0=\mathcal{X}$.

\textbf{Step 2.} If $\mathcal{X}$ is not feasible then output $B_D=\varnothing$.

\textbf{Step 3.} While $\mathcal{X}$ is singular do:

\begin{verse}
\textbf{Step 3.1.} Find a pair $(F,E)\in \mathcal{F}_{\min}(\mathcal{X})$;

\textbf{Step 3.2.} Put $\mathcal{X}=\resolve(\mathcal{X}, (F,E))$;

\textbf{Step 3.3.} If $\mathcal{X}$ is not feasible then output $B_D=\varnothing$.
\end{verse}

\textbf{Step 4.} If $\mathcal{X}$ is not quasinormal then output $B_D=\varnothing$.

\textbf{Step 5.} Find $K=\aut(\mathcal{X})$.

\textbf{Step 6.} If $p$ divides $|K|$ then find a Sylow $p$-subgroup $P$ of $K$; otherwise output $B_D=\varnothing$.

\textbf{Step 7.} Put $B=\pdbase(P)$.

\textbf{Step 8.} Output as $B_D$ a maximal subset of $B$ the elements of which are pairwise nonconjugate in $K_0=\aut(\mathcal{X}_0)$.

\begin{prop}\label{maintime}
The main algorithm correctly finds a $D$-base $B_D$ of $\aut(\mathcal{X})$  in time $\poly(n)$.
\end{prop}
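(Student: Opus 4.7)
I would split the argument into three parts: termination of the while loop on Step~3, correctness of $B_D$, and the polynomial time bound.

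Termination is immediate: each call to $\resolve$ returns $\mathcal{Y}>\mathcal{X}$ strictly by Proposition~\ref{resolve}, so $\rk(\mathcal{X})$ grows at every iteration, and since $\rk(\mathcal{X})\leq n^2$ the loop performs at most $n^2$ rounds.

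For correctness, the central invariant I would maintain throughout the execution is that $\aut(\mathcal{X})\leq K_0$ and every $D$-base of $\aut(\mathcal{X})$ contains a $D$-base of $K_0$ modulo $\aut(\mathcal{X})$-conjugation. This is preserved by $\resolve$ via the ``controls'' property of Proposition~\ref{resolve}, and by passage to a Sylow $p$-subgroup via Lemma~\ref{sylow}. Since every Cayley scheme over $D$ is feasible, none of the filters on Steps~2,~3.3, and~4 rejects a scheme in $\mathcal{K}_D$; hence if $\mathcal{X}_0\in\mathcal{K}_D$ the loop exits with $\mathcal{X}$ feasible and $\mathcal{F}(\mathcal{X})=\varnothing$, and Lemma~\ref{quassing} then forces $\mathcal{X}$ quasinormal. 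Lemma~\ref{qnrmaut2} computes $K=\aut(\mathcal{X})$ explicitly; $p$ divides $|K|$ because $K$ contains a regular copy of the $p$-group $D$; Kantor's algorithm provides a Sylow $p$-subgroup $P$; Proposition~\ref{dbase} returns $B=\pdbase(P)$; and by the invariant $B$ contains a $D$-base of $K_0$, which Step~8 extracts. In the opposite case $\mathcal{X}_0\notin\mathcal{K}_D$, either an earlier filter correctly rejects, or the algorithm reaches Step~7 with $\Reg(\aut(\mathcal{X}),D)=\varnothing$ (since $\aut(\mathcal{X})\leq K_0$ and $\Reg(K_0,D)=\varnothing$), whence $B=\varnothing$ and the output is empty.

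Polynomial running time of Steps~2--7 follows by combining Lemmas~\ref{feastest},~\ref{singtest},~\ref{quasitest},~\ref{qnrmaut2}, Propositions~\ref{resolve},~\ref{dbase}, Kantor's Sylow algorithm, and the $n^2$ bound on loop iterations, together with $|B|=\poly(n)$ by Lemma~\ref{basesize}. The main obstacle I anticipate lies in Step~8: one must test $K_0$-conjugacy of pairs $G_1,G_2\in B$ although $K_0$ is not given explicitly and is not assumed to be solvable. The plan is to invoke the Babai correspondence recalled in the introduction, reducing $K_0$-conjugacy of the regular subgroups $G_1,G_2\cong D$ of $K_0=\aut(\mathcal{X}_0)$ to the Problem CGI for the two Cayley graphs obtained by identifying $\Omega$ with $D$ via $G_1$ and $G_2$; the latter is solvable in $\poly(n)$ for Cayley graphs over $D$ by~\cite{Ry2}. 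With $|B|^2=\poly(n)$ pairs to process, Step~8 also runs in polynomial time, completing the proof.
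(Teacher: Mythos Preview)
Your termination and correctness arguments for Steps~1--7 are sound and match the paper's line of reasoning; your termination bound via $\rk(\mathcal{X})\leq n^2$ is in fact a bit cleaner than the paper's count of rank-$2$ sections of composite degree. The problem is your treatment of Step~8.

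The reduction you propose does not work. The two Cayley objects obtained by identifying $\Omega$ with $D$ via $G_1$ and via $G_2$ are both isomorphic to $\mathcal{X}_0$ itself (the identifying bijections are isomorphisms), so any graph-isomorphism test, including the CGI algorithm of~\cite{Ry2}, will always answer ``yes'' regardless of whether $G_1$ and $G_2$ are $K_0$-conjugate. What the Babai correspondence actually yields is that $G_1,G_2$ are $K_0$-conjugate if and only if the two Cayley realizations are \emph{Cayley isomorphic}, i.e., isomorphic via an element of $\aut(D)$; this is strictly finer than ordinary isomorphism and is not what Problem~CGI decides. (Your approach is salvageable along these lines, since $|\aut(D)|\leq (p-1)^2 n$ is polynomial and one can brute-force over $\aut(D)$, but that is not the argument you wrote.)

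The paper handles Step~8 by a direct enumeration that exploits $p\in\{2,3\}$. For an element $g\in G$ of order $n/p$ and degree $n$ one has $C_{\sym(\Omega)}(g)\cong C_{n/p}\wr\sym(p)$, whose order $(n/p)^p\,p!$ is polynomial in $n$. Hence for each candidate $g'\in G'$ of order $n/p$ one can list the whole coset $\{h\in\sym(\Omega):h^{-1}gh=g'\}=C_{\sym(\Omega)}(g)h_0$ explicitly, test each $h$ for membership in $K_0=\aut(\mathcal{X}_0)$ by checking whether it fixes every basis relation of $\mathcal{X}_0$, and finally check whether $h^{-1}Gh=G'$. This requires neither a generating set for $K_0$ nor any external isomorphism subroutine.
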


\begin{proof}
Firstly suppose that $\mathcal{X}\notin \mathcal{K}_D$. Then $\Reg(\aut(\mathcal{X}),D)=\varnothing$. We may assume that the algorithm terminates on Step~8. The coherent configuration on Step~3 is greater than the input coherent configuration and hence $\Reg(K,D)=\Reg(P,D)=\varnothing$. Therefore the correctness of the algorithm follows from Proposition~\ref{dbase}. 

Now let $\mathcal{X}\in \mathcal{K}_D$. Then $\Reg(\aut(\mathcal{X}),D)\neq\varnothing$. So the algorithm does not terminate on Step~1. Since $\mathcal{X}\in \mathcal{K}_D$, the scheme $\mathcal{X}$ is feasible and hence the algorithm does not terminate on Step~2. Due to Proposition~\ref{resolve},  after each iteration on Step~3 we have 
$$\mathcal{X}>\mathcal{X}_0$$ 
and every $D$-base of $\aut(\mathcal{X})$ contains a $D$-base of $K_0$. This implies that $\mathcal{X}\in \mathcal{K}_D$ and hence  $\mathcal{X}$ is feasible after each iteration on Step~3.  So the algorithm does not terminate on Step~3. Clearly, $\mathcal{X}$ is not singular after Step~3. Therefore $\mathcal{X}$ is quasinormal on Step~4  by Lemma~\ref{quassing}. This yields that the algorithm does not terminate on Step~4. Since $\mathcal{X}\in \mathcal{K}_D$ on Step~6, $|K|$ is divisible by~$p$ and the algorithm does not terminate on Step~6. The set $B$ constructed on Step~7 is a $D$-base of $P$ by Proposition~\ref{dbase}. In view of Lemma~\ref{sylow} the set $B$ contains a $D$-base of $K=\aut(\mathcal{X})$. Every $D$-base of $K$ contains a $D$-base of $K_0$ (see Algorithm RESOLVE). Therefore $B$ contains a $D$-base of $K_0$. Thus, the set $B_D$ found on Step~8 is a $D$-base of $K_0$.

Now estimate the running time of the algorithm. Step~2 can be done in time $\poly(n)$ by Lemma~\ref{feastest}. Due to Lemma~\ref{singtest}, Step~3.1 requires polynomial time. Step~3.2 terminates in time $\poly(n)$ by Proposition~\ref{resolve}. Step~3.3 runs in polynomial time by Lemma~\ref{feastest}. Therefore each iteration on Step~3 requires polynomial time in~$n$. Since $\mathcal{X}$ is feasible on Step~3, we conclude that the set of all sections of $\mathcal{X}$ of rank~2 and degree at least~$p^2$ has the  size polynomial in~$n$. After each iteration on Step~3 the number of such sections becomes strictly less (Algorithm RESOLVE). So the number of iterations on Step~3 is polynomial in~$n$ and hence Step~3 terminates in time $\poly(n)$. From Lemma~\ref{quasitest} it follows that Step~4 can be done in time $\poly(n)$. Lemma~\ref{qnrmaut2} implies that Step~5 requires polynomial time. A Sylow $p$-subgroup $P$ of $K$ on Step~6 can be found by the polynomial-time Kantor algorithm (see~\cite{Seress}). In view of Proposition~\ref{dbase}, Step~7 can be done in time $\poly(n)$.

Now let us prove that the set $B_D$ on Step~8 can be found in time $\poly(n)$. The set $B$ has the size polynomial in $n$ by Lemma~\ref{basesize}. So to prove the required time bound for Step~8 it is sufficient to prove that given $G,G^{'}\in B$ one can check whether $G$ and $G^{'}$ are $K_0$-conjugate in time $\poly(n)$. Let $g=(\alpha_1\ldots \alpha_{n/p})\ldots (\alpha_{(p-1)n/p}\ldots \alpha_{n})\in G$ and $g^{'}=(\beta_1\ldots \beta_{n/p})\ldots (\beta_{(p-1)n/p}\ldots \beta_{n})\in G^{'}$ be elements of order $n/p$ and degree~$n$. Let $h_0$ be a permutation taking $\alpha_i$ to $\beta_i$ for each $i\in\{1,\ldots,n/p\}$. Then $h_0^{-1}gh_0=g^{'}$ and 
$$\{h\in \sym(\Omega): h^{-1}gh=g^{'}\}=C_{\sym(\Omega)}(g)h_0.$$
Note that $C_{\sym(\Omega)}(g)$ is permutationally isomorphic to the group $C_{n/p}\wr \sym(p)$ which is solvable. This yields that $C_{\sym(\Omega)}(g)$ can be constructed efficiently.  So the set $C_{\sym(\Omega)}(g)h_0$ has the size polynomial in~$n$ and it can be constructed in time $\poly(n)$. Therefore the set 
$$V(g,g^{'},K_0)=C_{\sym(\Omega)}(g)h_0\cap K_0=\{h\in K_0: h^{-1}gh=g^{'}\}$$ 
has the size polynomial in~$n$ and it can be constructed in time $\poly(n)$ by testing every permutation of $C_{\sym(\Omega)}(g)h_0$ for membership to the group $K_0$.

Let $g_1$ and $g_2$ be generators of $G$ of orders $n/p$ and $p$ respectively. Then the above discussion implies that one can construct the set 
$$V=\bigcup \limits_{g_1^{'}\in G^{'}, |g_1^{'}|=n/p} V(g_1,g_1^{'}, K_0)$$ 
in time $\poly(n)$. If $V=\varnothing$ then $G$ and $G^{'}$ are not $K_0$-conjugate. If $V\neq \varnothing$  then for every $h\in V$ one can check whether $h^{-1}g_2h\in G^{'}$. Since $V$ has the polynomial size, this can be done in polynomial time. If  $h^{-1}g_2h\in G^{'}$ for some $h\in V$ then $h^{-1}Gh=G^{'}$ and hence $G$ and $G^{'}$ are $K_0$-conjugate; otherwise $G$ and $G^{'}$ are not $K_0$-conjugate.
\end{proof}

\section{Proof Of Theorem~\ref{main2}}

\begin{proof}[Proof of the Theorem~\ref{main2}]
Given a graph $\Gamma$ on $n$ vertices one can construct by using the Weisfeiler-Leman algorithm (see \cite{Weis, WeisL}) in time $\poly(n)$ the coherent configuration $\mathcal{X}=\mathcal{X}(\Gamma)$ on $n$ points such that $\aut(\Gamma)=\aut(\mathcal{X})$. From Proposition~\ref{maintime} it follows that a $D$-base of $\aut(\mathcal{X})$ can be constructed in time $\poly(n)$. Therefore a $D$-base of $\aut(\Gamma)$  can be constructed in time $\poly(n)$ and the theorem is proved.
\end{proof}

\end{document}